\newcommand{\Prob}{\mathbb{P}}
\newcommand{\hProb}{\hat{\mathbb{P}}}
\newcommand{\Probcond}[2]{\mathbb{P}\left(#1 \, \lvert \, #2\right)}
\newcommand{\Exp}{\mathbb{E}}
\newcommand{\hExp}{\hat{\mathbb{E}}}
\newcommand{\Expcond}[2]{\mathbb{E}\left[#1 \, \lvert \, #2\right]}
\newcommand{\bbR}{\mathbb{R}}
\newcommand{\risk}{\mathcal{R}}
\newcommand{\class}[1]{\mathcal{#1}}
\newcommand{\eqdef}{\vcentcolon=}
\newcommand{\defeq}{=\vcentcolon}
\newcommand{\parent}[1]{\left( #1 \right)}
\newcommand{\ens}[1]{\left\{ #1\right\}}
\newcommand{\abs}[1]{\left\lvert #1\right\rvert}
\newcommand{\absin}[1]{\lvert #1\rvert}
\newcommand{\enscond}[2]{\left\{ #1 \, : \, #2\right\}}
\newcommand{\ind}[1]{{\bf 1}_{\left\{#1\right\}}}
\newcommand{\data}{\class{D}}
\newcommand{\heta}{\hat{\eta}}
\newcommand{\htheta}{\hat{\theta}}
\newcommand{\ttheta}{\tilde{\theta}}
\DeclareMathOperator*{\argmin}{arg\,min}
\newcommand{\wrt}{{\em w.r.t.~}}
\newcommand{\iid}{{\rm i.i.d.~}}
\newcommand{\simiid}{\overset{\text{\iid}}{\sim}}
\newtheorem{theorem}{Theorem}[section]
\newtheorem{definition}[theorem]{Definition}
\newtheorem{proposition}[theorem]{Proposition}
\newtheorem{lemma}[theorem]{Lemma}
\newtheorem{assumption}[theorem]{Assumption}
\newtheorem{remark}[theorem]{Remark}
\title{\LARGE \bf  Leveraging Labeled and Unlabeled Data for Consistent Fair Binary Classification}
\author{Evgenii Chzhen$^{1,2}$, Christophe Denis$^1$, Mohamed Hebiri$^1$,\vspace{.1truecm}\\ \vspace{.2truecm} Luca Oneto$^3$, and Massimiliano Pontil$^{4,5}$\\
{\small $^1$Universit\'e Paris-Est, $^2$Universit\'{e} Paris-Sud,
$^3$Universit\`a di Pisa,} \\
{\small $^4$Istituto Italiano di Tecnologia, $^5$University College London}}
\date{\today}
\begin{document}
\maketitle
\begin{abstract}
We study the problem of fair binary classification using the notion of Equal Opportunity.
It requires the true positive rate to distribute equally across the sensitive groups.
Within this setting we show that the fair optimal classifier is obtained by recalibrating the Bayes classifier by a group-dependent threshold. We provide a constructive expression for the threshold.
This result motivates us to devise a plug-in classification procedure based on both unlabeled and labeled datasets.
While the latter is used to learn the output conditional probability, the former is used for calibration.
The overall procedure can be computed in polynomial time and it is shown to be statistically consistent both in terms of the classification error and fairness measure. Finally, we present numerical experiments which indicate that our method is often superior or competitive with the state-of-the-art methods on benchmark datasets.
\end{abstract}
\section{Introduction}
As machine learning becomes more and more spread in our society, the potential risk of using algorithms that behave unfairly is rising.
As a result there is growing interest to design learning methods that meet ``fairness'' requirements, see \cite{barocas-hardt-narayanan,Donini_Oneto_Ben-David_Taylor_Pontil18,dwork2018decoupled,hardt2016equality,zafar2017fairness,zemel2013learning,kilbertus2017avoiding,kusner2017counterfactual,calmon2017optimized,joseph2016fairness,chierichetti2017fair,jabbari2016fair,yao2017beyond,lum2016statistical,zliobaite2015relation}
and references therein. A central goal is to make sure that sensitive information does not ``unfairly'' influence the outcomes of learning methods. For instance, if we wish to predict whether a university student applicant should be offered a scholarship based on curriculum, we would like our model to not unfairly use additional sensitive information such as gender or race.

Several measures of fairness of a classifier have been studied in the literature~\cite{zafar2019fairness}, ranging from Demographic Parity~\cite{calders2009building}, Equal Odds and Equal Opportunity \citep{hardt2016equality}, Disparate Treatment, Impact, and
Mistreatment~\cite{zafar2017fairness}, among others. In this paper, we study the problem of learning a binary classifier which  satisfies the Equal Opportunity fairness constraint. It requires that the true positive rate of the classifier is the same across the sensitive groups. This notion has been used extensively in the literature either as a postprocessing step \cite{hardt2016equality} on a learned classifier or directly during training, see for example \citep{Donini_Oneto_Ben-David_Taylor_Pontil18} and references therein.


We address the important problem of devising statistically consistent and computationally efficient learning procedures that meet the fairness constraint. Specifically, we make four contributions.
First, we derive in Proposition~\ref{lem:optimal_rule} the expression for the optimal equal opportunity classifier, derived via thresholding of the Bayes regressor. Second, inspired by the above result we proposed a semi-supervised plug-in type method, which first estimates the regression function on labeled data and then estimates the unknown threshold using unlabeled data. Consequently, we establish in Theorem \ref{thm:as_fairness} that the proposed procedure is consistent, that is, it asymptotically satisfies the equal opportunity constraint and its risk converges to the risk of the optimal equal opportunity classifier. Finally, we present numerical experiments which indicate that our method is often superior or competitive with the state-of-the-art on benchmark datasets.

We highlight that the proposed learning algorithm can be applied on top of any off-the shelf method which consistently estimates the regression function (class condition probability), under mild additional assumptions which we discuss in the paper.
Furthermore, our calibration procedure is based on solving a simple univariate problem. Hence the generality, statistical consistency and computational efficiency are strengths of our approach.

The paper is organized in the following manner.
In Section~\ref{sec:optimal_equal_opp_classifier}, we introduce the problem and derive a form of the optimal equal opportunity classifier.
Section~\ref{subsec:an_algorithm} is devoted to the description of our method.
In Section~\ref{sec:rates_of_convergence} we introduce assumptions used throughout this work and establish that the proposed learning algorithm is consistent.
Finally, Section~\ref{sec:experiments} presents  numerical experiments with our method.
\subsection{Related work}
\label{subsec:related_works}
In this section we review previous contributions on the subject. Works on algorithmic fairness can be divided in three families.
Our algorithm falls within the first family, which modifies a pre-trained classifier in order to increase its fairness properties while maintaining as much as possible the classification performance, see~\cite{Pleiss_Raghavan_Wu_Kleinberg_Weinberger17,beutel2017data,hardt2016equality,feldman2015certifying} and references therein.
Importantly, for our approach the post-processing step requires only unlabeled data, which is often easier to collect than its labeled counterpart.
\color{black}
Methods in the second family enforce fairness directly during the training step, e.g.~\cite{Oneto_Donini_Pontil19,Donini_Oneto_Ben-David_Taylor_Pontil18,Agarwal_Beygelzimer_Dubik_Langford_Wallach18,Cotter_Gupta_Jiang_Srebro_Sridharan_Wang_Woodworth_You18}.
The third family of methods implements fairness by modifying the data representation and then employs standard machine learning methods, see e.g.~\cite{Donini_Oneto_Ben-David_Taylor_Pontil18,adebayo2016iterative,calmon2017optimized,kamiran2009classifying,zemel2013learning,kamiran2012data,kamiran2010classification} as representative examples.

To the best of our knowledge the formula for the optimal fair classifier presented here is novel.
In~\citep{hardt2016equality} the authors note that the optimal equalized odds or equal opportunity classifier can be derived from the Bayes optimal regressor, however, no explicit expression for this threshold is provided.
The idea of recalibrating the Bayes classifier is also discussed in a number of papers, see for example \cite{Pleiss_Raghavan_Wu_Kleinberg_Weinberger17,menon2018cost} and references therein.
More importantly, the problem of deriving \emph{efficient} and \emph{consistent} estimators under fairness constraints has received limited attention in the literature.
In \cite{Donini_Oneto_Ben-David_Taylor_Pontil18}, the authors present consistency results under restrictive assumptions on the model class.
Furthermore, they only consider convex approximations of the risk and fairness constraint and it is not clear how to relate their results to the original problem with the miss-classification risk.
{In~\cite{Agarwal_Beygelzimer_Dubik_Langford_Wallach18}, the authors reduce the problem of fair classification to a sequence of cost-sensitive problems by leveraging the saddle point formulation.
They show that their algorithm is consistent in both risk and fairness constraints.
However, similarly to \cite{Donini_Oneto_Ben-David_Taylor_Pontil18}, the authors of \cite{Agarwal_Beygelzimer_Dubik_Langford_Wallach18} assume that the family of possible classifiers admits a bounded Rademacher complexity.
}

Plug-in methods in classification problems are well established and are well studied from statistical perspective, see~\cite{Yang99,Audibert_Tsybakov07,Devroye78} and references therein; in particular, it is known that one can build a plug-in type classifier which is optimal in minimax sense~\cite{Yang99,Audibert_Tsybakov07}.
Until very recently, theoretical studies on such methods were reduced to an efficient estimation of the regression function.
Indeed, in standard settings of classification the threshold is always known beforehand, thus, all the information about the optimal classifier is wrapped into the distribution of the label conditionally on the feature.

More recently, classification problems with a distribution dependent threshold have emerged.
Prominent examples include classification with non-decomposable measures~\cite{Yan_Koyejo_Zhong_Ravikumar18,Koyejo_Natarajan_Ravikumar_Dhillon15,Zhao_Edakunni_Pocock_Brown13}, classification with reject option~\cite{Denis_Hebiri15,Lei14}, and confidence set setup of multi-class classification~\cite{Chzhen_Denis_Hebiri19,Mauricio_Jing_Wasserman18,Denis_Hebiri17}, among others.
A typical estimation algorithm in these scenarios is based on the plug-in strategy, which uses extra data to estimate the unknown threshold. Interestingly, in some setups a practitioner does not need to have access to two labeled samples and optimal estimation can be efficiently performed in semi-supervised manner~\cite{Chzhen_Denis_Hebiri19,Denis_Hebiri17}.
\section{Optimal Equal Opportunity classifier}
\label{sec:optimal_equal_opp_classifier}
Let $(X, S, Y)$ be a tuple on $\bbR^d \times \{0, 1\} \times \{0, 1\}$ having a joint distribution $\Prob$.
Here the vector $X \in \bbR^d$ is seen as the vector of features, $S \in \{0, 1\}$ a binary sensitive variable and $Y \in \{0, 1\}$ a binary output label that we wish to predict from the pair $(X,S)$.
We also assume that the distribution is non-degenerate in $Y$ and $S$ that is $\Prob(S = 1) \in (0, 1)$ and $\Prob(Y = 1) \in (0, 1)$.
A classifier $g$ is a measurable function from $\bbR^d \times \{0, 1\}$ to $\{0, 1\}$, and the set of all such functions is denoted by $\class{G}$.
In words, each classifier receives a pair $(x, s) \in \bbR^d \times \{0, 1\}$ and outputs a binary prediction $g(x, s) \in \{0, 1\}$.
For any classifier $g$ we introduce its associated miss-classification risk as
\begin{align}
\textstyle
    \label{eq:risk_definition}
    \risk(g) \eqdef \Prob\parent{g(X, S) \neq Y}\enspace.
\end{align}
A \emph{fair} optimal classifier is formally defined as
\begin{align*}
\textstyle
    g^* \in \argmin_{g \in \class{G}}\enscond{\risk(g)}{g \text{ is fair}}\enspace.
\end{align*}
There are various definitions of fairness available in the literature, each having its critics and its supporter. In this work, we employ the following definition introduced in~\citep{hardt2016equality}.
We refer the reader to this work as well as
~\cite{Donini_Oneto_Ben-David_Taylor_Pontil18,Agarwal_Beygelzimer_Dubik_Langford_Wallach18,menon2018cost} for a discussion, motivation of this  definition, and a comparison to other fairness definitions.
\begin{definition}[Equal Opportunity~\citep{hardt2016equality}]
    \label{def:fair_classifiers}
    A classifier $(x, s) \mapsto g(x, s) \in \{0, 1\}$ is called fair if
    \begin{align*}
\textstyle
        \Probcond{g(X, S) = 1}{S = 1, Y = 1} = \Probcond{g(X, S) = 1}{S = 0, Y = 1}\enspace.
    \end{align*}
    The set of all fair classifiers is denoted by $\class{F}(\Prob)$.
\end{definition}

Note, that the definition of fairness depends on the underlying distribution $\Prob$ and hence the whole class $\class{F}(\Prob)$ of the fair classifiers should be estimated.
Further, notice that the class $\class{F}(\Prob)$ is non-empty as it always contains a classifier $g(x, s) \equiv 0$.

Using this notion of fairness we define an optimal equal opportunity classifier as a solution of the optimization problem
\begin{align}
\textstyle
    \label{eq:fair_optimal}
    \min_{g \in \class{G}}\enscond{\risk(g)}{\Probcond{g(X, S) = 1}{Y = 1, S = 1} = \Probcond{g(X, S) = 1}{Y = 1, S = 0}}\enspace.
\end{align}
We now introduce an assumption on the regression function that plays an important role in establishing the form of the optimal fair classifier.
\begin{assumption}
    \label{ass:continuity}
    For each $s \in \{0, 1\}$ we require the mapping $t \mapsto \Probcond{\eta(X, S) \leq t}{S = s}$ to be continuous on $(0, 1)$, where for all $(x,s) \in \bbR^d \times \{0,1\}$, we let the regression function
    \begin{align*}
        \eta(x, s) \eqdef \Probcond{Y = 1}{X = x, S = s} = \Expcond{Y}{X=x,S=s}\enspace.
    \end{align*}
    Moreover, for every $s \in \{0,1\}$, we assume that $\Probcond{\eta(X, s) \geq 1/2}{S = s} > 0$.
\end{assumption}
The first part of Assumption~\ref{ass:continuity} is achieved by many distributions and has been introduced in various contexts, see e.g. \cite{Chzhen_Denis_Hebiri19,Yan_Koyejo_Zhong_Ravikumar18,Mauricio_Jing_Wasserman18,Denis_Hebiri15,Lei14} and references therein. It says that, for every $s\in \{0,1\}$ the random variable $\eta(X, s)$ does not have atoms, that is, the event $\ens{\eta(X, s) = t}$ has probability zero. The second part of the assumption states that the regression function $\eta(X, s)$ must surpass the level $1/2$ on a set of non-zero measure. Informally, returning to scholarship example mentioned in the introduction, this assumption means that there are individuals from \emph{both} groups who are more likely to be offered a scholarship based on their curriculum.


In the following result we establish that the optimal equal opportunity classifier is obtained by recalibrating the Bayes classifier. \begin{proposition}[Optimal Rule]
    \label{lem:optimal_rule}
    Under Assumption~\ref{ass:continuity} an optimal classifier $g^*$ can be obtained for all $(x,s) \in \bbR^d \times \{0, 1\}$ as
    \begin{equation}
\textstyle
    g^*(x, 1)=
    \ind{1 \leq \eta(x, 1)\parent{2 - \frac{\theta^*}{\Prob\parent{Y = 1, S = 1}}}},\quad
    g^*(x, 0)=
    \ind{1 \leq \eta(x, 0)\parent{2 + \frac{\theta^*}{\Prob\parent{Y = 1, S = 0}}}}
    \end{equation}
where $\theta^* \in \bbR$ is determined from the equation
    \begin{align*}
\textstyle
    \frac{\Exp_{X | S = 1}\left[\eta(X, 1)\ind{1 \leq \eta(X, 1)\parent{2 - \frac{\theta^*}{\Prob\parent{Y = 1, S = 1}}}}\right]}{\Probcond{Y = 1}{S = 1}}
    =
    &\frac{\Exp_{X | S = 0}\left[\eta(X, 0)\ind{1 \leq \eta(X, 0)\parent{2 + \frac{\theta^*}{\Prob\parent{Y = 1, S = 0}}}}\right]}{\Probcond{Y = 1}{S = 0}}\enspace.
\end{align*}
Furthermore it holds that $\abs{\theta^*} \leq 2$.
\end{proposition}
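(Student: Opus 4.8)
The plan is to solve the constrained problem~\eqref{eq:fair_optimal} by Lagrangian duality: I construct an explicit candidate $g^*$, then verify that it is both feasible and risk-optimal. The two preliminary rewritings are standard. Conditioning the risk on $(X,S)$ gives $\risk(g) = \Exp[\eta(X,S)] + \Exp[g(X,S)(1 - 2\eta(X,S))]$, so minimizing $\risk$ over $\class{G}$ amounts to minimizing the second term. For the constraint, the tower property yields $\Probcond{g(X,S)=1}{S=s, Y=1} = \Exp_{X\mid S=s}[g(X,s)\eta(X,s)]/\Probcond{Y=1}{S=s}$ for each $s\in\{0,1\}$, which is exactly the ratio appearing in the displayed equation for $\theta^*$.

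Next I would introduce the Lagrangian $L(g,\theta) = \risk(g) + \theta\, C(g)$, where $C(g)$ denotes the difference between the two true positive rates above. Collecting terms by $s$, $L(\cdot,\theta)$ separates into two integrals, the $s$-th being $\Prob(S=s)\,\Exp_{X\mid S=s}[g(X,s)(1 - \eta(X,s)(2 \mp \theta/\Prob(Y=1,S=s)))]$, with the minus sign for $s=1$ and the plus sign for $s=0$. Since $\Prob(S=s)>0$, the pointwise minimizer over $g(x,s)\in\{0,1\}$ sets $g(x,s)=1$ exactly where the bracket is negative, which reproduces the two indicator formulas in the statement. The only subtlety is the tie set where the bracket vanishes; by the first part of Assumption~\ref{ass:continuity} the law of $\eta(X,s)$ has no atoms, so this set has probability zero and the minimizer is essentially unique.

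The crux, and the step I expect to be the main obstacle, is selecting $\theta$ so that the resulting classifier $g^*_\theta$ is feasible. Setting $G(\theta)\eqdef C(g^*_\theta)$, I would show that $G$ is continuous and non-increasing. Continuity comes from the no-atom property through dominated convergence: the thresholds $\tau_1(\theta)=(2-\theta/\Prob(Y=1,S=1))^{-1}$ and $\tau_0(\theta)=(2+\theta/\Prob(Y=1,S=0))^{-1}$ vary continuously, and each map $\tau\mapsto\Exp_{X\mid S=s}[\eta(X,s)\ind{\eta(X,s)\geq\tau}]$ is jump-free. Monotonicity is immediate, since raising $\theta$ increases $\tau_1$ and decreases $\tau_0$, lowering the first true positive rate and raising the second. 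The decisive computation is at the endpoints. At $\theta=2\Prob(Y=1,S=1)$ the group-$1$ threshold degenerates, forcing $g^*_\theta(\cdot,1)\equiv 0$ and hence a vanishing first rate, while the group-$0$ threshold stays below $1/2$; the second part of Assumption~\ref{ass:continuity} then makes the second rate strictly positive, so $G<0$ there. By the symmetric evaluation at $\theta=-2\Prob(Y=1,S=0)$ one gets $G>0$.

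The intermediate value theorem now produces a root $\theta^*$ lying strictly between $-2\Prob(Y=1,S=0)$ and $2\Prob(Y=1,S=1)$; since $\Prob(Y=1,S=s)\leq\Prob(Y=1)<1$, this immediately gives $\absin{\theta^*}\leq 2$, the claimed bound. Finally, optimality follows from a one-line duality check: for any feasible $g$, $\risk(g)=L(g,\theta^*)\geq L(g^*_{\theta^*},\theta^*)=\risk(g^*_{\theta^*})$, where the outer equalities use $C(g)=C(g^*_{\theta^*})=0$ and the inequality is the pointwise minimality of $g^*_{\theta^*}$. Hence $g^*_{\theta^*}$ is a fair classifier attaining the minimal risk, which is the assertion of the proposition.
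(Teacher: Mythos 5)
Your proposal is correct, and its skeleton coincides with the paper's: both pass to the Lagrangian $\risk(g)+\theta\,C(g)$, solve the inner minimization pointwise to get the two threshold rules (with the tie set negligible by the no-atom part of Assumption~\ref{ass:continuity}), and conclude optimality by the identical weak-duality sandwich $\risk(g)=L(g,\theta^*)\geq L(g^*_{\theta^*},\theta^*)=\risk(g^*_{\theta^*})$ for feasible $g$. Where you genuinely diverge is the production of $\theta^*$. The paper substitutes $g^*_\lambda$ back into the dual, observes that the resulting function $\lambda\mapsto \Exp_{X|S=1}(\cdot)_+ + \Exp_{X|S=0}(\cdot)_+$ is convex, and characterizes $\lambda^*$ through the first-order optimality condition, with Assumption~\ref{ass:continuity} used to collapse the subdifferential to a gradient so that the stationarity condition is exactly the equal-TPR constraint; the bound $\absin{\theta^*}\leq 2$ is then proved separately by contradiction, checking that $2-\theta^*/\Prob(Y=1,S=1)>0$ and $2+\theta^*/\Prob(Y=1,S=0)>0$ via the second part of the assumption. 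You instead apply the intermediate value theorem directly to $G(\theta)=C(g^*_\theta)$: continuity from the no-atom property, and the endpoint sign evaluations at $\theta=2\Prob(Y=1,S=1)$ and $\theta=-2\Prob(Y=1,S=0)$, which are in substance the paper's contradiction computation repurposed. Your route is more elementary and buys two things at once: it sidesteps the coercivity/existence discussion implicit in minimizing the convex dual, and it delivers the localization $\theta^*\in(-2\Prob(Y=1,S=0),\,2\Prob(Y=1,S=1))$, hence $\absin{\theta^*}\leq 2$, as a byproduct rather than a separate argument (your monotonicity claim for $G$ is not even needed — continuity plus the sign change suffices). What the paper's convex-analytic route buys in exchange is the explicit identification of $\theta^*$ as the minimizer of the dual objective, a characterization it reuses later (e.g.\ in the proof of Theorem~\ref{thm:pseudo_oracle_rate}, where $\theta^*$ and $\ttheta$ are compared as minimizers of the corresponding positive-part functionals). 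One shared caveat: Assumption~\ref{ass:continuity} only imposes continuity of $t\mapsto\Probcond{\eta(X,s)\leq t}{S=s}$ on $(0,1)$, so an atom of $\eta(X,s)$ at $1$ is not excluded; your continuity of $G$ at the parameter value where the threshold crosses $1$, like the paper's subgradient-to-gradient reduction, silently ignores this boundary case, so this is not a gap relative to the paper's own argument.
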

\begin{proof}[{Proof sketch}]
The proof relies on weak duality.
The first step of the proof is to write the minimization problem for $g^*$ using a ``min-max'' problem formulation.
We consider the corresponding dual ``max-min'' problem and show that it can be analytically solved. Then, the continuity part of Assumption~\ref{ass:continuity} allows to demonstrate that the solution of the ``max-min'' problem gives a solution of the ``min-max'' problem.
The second part of Assumption~\ref{ass:continuity} is used to prove that $\absin{\theta^*} \leq 2$.
\end{proof}


Before proceeding further, let us define a notion of unfairness, which plays a key role in our statistical analysis; it is sometimes referred to as difference of equal opportunity (DEO) in the literature \cite[see e.g.][]{Donini_Oneto_Ben-David_Taylor_Pontil18}.
\begin{definition}[Unfairness]
For any classifier $g$ we define its unfairness as
\begin{align*}
\textstyle
    \Delta(g, \Prob) = \abs{\Probcond{g(X, S) = 1}{S = 1, Y = 1} - \Probcond{g(X, S) = 1}{S = 0, Y = 1}}\enspace.
\end{align*}
\label{def:deo}
\end{definition}
A principal goal of this paper
is to construct a classification algorithm $\hat g$ which satisfies
\begin{align*}
\textstyle
    \underbrace{\Exp [\Delta(\hat g, \Prob)] \rightarrow 0}_{\text{asymptotically fair}},\quad \text{and }\quad \underbrace{\Exp[\risk(\hat g)] \rightarrow \risk(g^*)}_{\text{asymptotically optimal}}\enspace,
\end{align*}
where the expectations are taken with respect to the distribution of data samples.
As we shall see our estimator is built from independent sets of labeled and unlabeled samples.
Hence the convergence above is meant to hold as both samples grow to infinity.

\section{Proposed procedure}
\label{subsec:an_algorithm}
In this section,
we present the proposed plug-in algorithm and begin to study its theoretical properties.

We assume that we have at our disposal two datasets, labeled $\data_n$ and unlabeled $\data_N$ defined as
$$\data_n = \ens{(X_i, S_i, Y_i)}_{i = 1}^n \simiid \Prob,~~ \text{and}~~ \data_N = \ens{(X_i, S_i)}_{i = n + 1}^{n + N} \simiid \Prob_{(X, S)}\enspace,$$
where $\Prob_{(X, S)}$ is the marginal distribution of the vector $(X, S)$.
We additionally assume that
the estimator $\heta$ of the regression function is
constructed based on $\data_n$, independently of $\data_N$.
Let us denote by $\hExp_{X | S = 1}, \hExp_{X | S = 0}$ expectations taken \wrt the empirical distributions induced by $\data_N$, that is,
\begin{align*}
\textstyle
    \hProb_{X | S = s} &= \frac{1}{\abs{\enscond{(X, S) \in \data_N}{S = s}}}\sum_{\enscond{(X, S) \in \data_N}{S = s}}\delta_{X}\enspace,
\end{align*}
for all $s \in \{0, 1\}$,
and by $\hExp_{S}$ expectation taken \wrt the empirical measure of $S$, that is, $\hProb_S = \frac{1}{N}\sum_{(X, S) \in \data_N} \delta_S$.
\begin{remark}
In theory, the empirical distributions might be not well defined, since they are \emph{only} valid if the unlabeled dataset $\data_N$ is composed of features from \emph{both} {groups}.
We show how to bypass this problem theoretically in supplementary material.
Nevertheless, this remark has little to no impact in practice and in most situations these quantities are well defined.
\end{remark}
Based on the estimator $\heta$ and the unlabeled sample $\data_N$, let us introduce the following estimators for each $s \in \{0, 1\}$
\begin{align*}
    \hProb(Y = 1, S = s) &\eqdef \hExp_{X | S = s}[\heta(X, s)]\hProb_{S}(S = s)\enspace.
\end{align*}

Using the above estimators a straightforward procedure to mimic the optimal classifier $g^*$ provided by Proposition~\ref{lem:optimal_rule} is to employ a {\em plug-in rule} ${\hat g}$, obtained by replacing all the unknown quantities by either their empirical versions or their estimates. Specifically, we let $\hat{g}$ at $(x,s) \in \bbR^d \times \{0, 1\}$ as
 \begin{equation}
\textstyle
    {\hat g}(x, 1)=
    \ind{1 \leq \heta(x, 1)\parent{2 - \frac{\htheta}{\hProb\parent{Y = 1, S = 1}}}},\quad {\hat g}(x, 0)=
    \ind{1 \leq \heta(x, 0)\parent{2 + \frac{\htheta}{\hProb\parent{Y = 1, S = 0}}}}\enspace.
    \end{equation}
It remains to define the value of $\hat\theta$, clearly it is desirable to mimic the condition that is satisfied by $\theta^*$ in Proposition~\ref{lem:optimal_rule}.
To this end, we make use of the unlabeled data $\data_N$ and of the estimator $\heta$ previously built from the labeled dataset $\data_n$.
Consequently, we define a data-driven version of unfairness $\Delta(g, \Prob)$, which allows to construct an approximation $\hat\theta$ of the true value $\theta^*$.
\begin{definition}[Empirical unfairness]
    \label{def:empirical_unfairness}
    For any classifier $g$, an estimator $\heta$ based on $\data_n$, and unlabeled sample $\data_N$ the empirical unfairness is defined as
    \begin{align*}
\textstyle
        \hat{\Delta}(g, \Prob) = \abs{\frac{\hExp_{X | S = 1}\heta(X, 1)g(X, 1)}{\hExp_{X | S = 1}\heta(X, 1)} - \frac{\hExp_{X | S = 0}\heta(X, 0)g(X, 0)}{\hExp_{X | S = 0}\heta(X, 0)}}\enspace.
    \end{align*}
\end{definition}
Notice that the empirical unfairness $\hat{\Delta}(g, \Prob)$ is data-driven, that is, it does not involve unknown quantities.
One might wonder why it is an empirical version of the quantity $\Delta(g, \Prob)$ in Definition \ref{def:deo} and what is the reason to introduce it.
The definition reveals itself when we rewrite the population 
of unfairness $\Delta(g, \Prob)$ using\footnote{Note additionally that for all $s \in \{0, 1\}$ we can write $\ind{Y = 1, g(X, s) = 1} \equiv Yg(X, s)$, since both $Y$ and $g$ are binary.} the identity
\begin{align*}
\textstyle
    \Probcond{g(X, S) = 1}{S = s, Y = 1} = \frac{\Probcond{g(X, S) = 1, Y = 1}{S = s}}{\Probcond{Y = 1}{S = s}} = \frac{\Exp_{X | S = s}[\eta(X, s) g(X, s)]}{\Exp_{X | S = s}[\eta(X, s)]}\enspace.
\end{align*}
Using the above expression we can rewrite
\begin{align*}
\textstyle
  \Delta(g, \Prob) = \abs{\frac{\Exp_{X | S = 1}[\eta(X, 1) g(X, 1)]}{\Exp_{X | S = 1}[\eta(X, 1)]} - \frac{\Exp_{X | S = 0}[\eta(X, 0) g(X, 0)]}{\Exp_{X | S = 0}[\eta(X, 0)]}}\enspace.
\end{align*}
Hence, the passage from the population unfairness to its empirical version in Definition~\ref{def:empirical_unfairness} formally reduces to substituting ``hats'' to all the unknown quantities.

Using Definition \ref{def:empirical_unfairness}, a logical estimator $\htheta$ of $\theta^*$ can be obtained as
\begin{align*}
    \htheta \in \argmin_{\theta \in [-2, 2]}\hat{\Delta}(\hat g_{\theta}, \Prob)\enspace,
\end{align*}
where, for all $\theta \in [-2, 2]$, $\hat g_{\theta}$ is defined at $(x,s) \in \bbR^d \times \{0, 1\}$ as
   \begin{equation}
\textstyle
    \hat{g}_\theta(x, 1)=
    \ind{1 \leq \heta(x, 1)\parent{2 - \frac{\theta}{\hProb\parent{Y = 1, S = 1}}}},\quad
    \hat{g}_\theta(x, 0)=
    \ind{1 \leq \heta(x, 0)\parent{2 + \frac{\theta}{\hProb\parent{Y = 1, S = 0}}}}\enspace.
    \end{equation}
In this case, the algorithm $\hat g$ that we propose is such that $\hat g \equiv \hat{g}_{\htheta}$.
It is crucial to mention that since the quantity $\hat{\Delta}(\hat g_{\theta}, \Prob)$ is empirical, then there might be no $\theta$ which delivers zero for the empirical unfairness.
This is exactly the reason we perform a minimization of this quantity.

\begin{remark}
Even though we believe that the introduction of the unlabeled sample is one of the strong points of our approach, this sample may not be available on some benchmark datasets. In this case, we can simply randomly split the data into two parts disregarding labels in one of them, or alternatively we can use the same sample twice.
The second path is not directly justified by our theoretical results, yet, let us suggest the following intuitive explanation for this approach.
On the first and the second steps, our procedure approximates two \emph{independent} parts of the distribution $\Prob$ of the random tuple $(X, S, Y)$.
Indeed, following the factorization $\Prob = \Prob_{Y | X, S} \otimes \Prob_{(X , S)}$, the first step of our procedure approximates $\Prob_{Y | X, S}$, whereas the second step is aimed at $\Prob_{(X , S)}$ which is independent from $\Prob_{Y | X, S}$.
In our experiments, reported in Section~\ref{sec:experiments}, we exploited the same set of data for both $\mathcal{D}_n$ and $\mathcal{D}_{N}$, since no unlabelled sample were available and splitting the dataset would have reduced the quality of the trained model because the datasets have a small sample size.
\end{remark}
\section{Consistency}
\label{sec:rates_of_convergence}
In this section we establish that the proposed procedure is consistent.
To present our theoretical results we impose two assumptions on the estimator $\heta$ and demonstrate how to satisfy them in practice.
\begin{assumption}
    \label{ass:estimator_heta_is_consistent}
    The estimator $\heta$ which is constructed on $\data_n$ satisfies for all $s \in \{0, 1\}$
    \begin{itemize}
      \item[(i)] $\Exp_{\data_n}\Exp_{X | S = s}\abs{\eta(X ,S) - \heta(X, S)} \rightarrow 0$ as $n \rightarrow \infty$;
      \item[(ii)] There exists a sequence $c_{n, N} > 0$ satisfying $\tfrac{1}{c_{n, N}\sqrt{N}} = o_{n, N}\parent{1}$ and $c_{n, N} = o_{n, N}\parent{1}$ such that $\Exp_{X | S = s}[\heta(X ,S)] \geq c_{n, N}$ almost surely.
    \end{itemize}
\end{assumption}

\begin{remark}
  There are two parts in Assumption~\ref{ass:estimator_heta_is_consistent}, the {first} one requires a consistent estimator in $\ell_1$ norm.
  This first assumption is rather weak, since there are many different available consistent estimators for the regression function in the literature, including the Maximum likelihood estimator~\cite{Yan_Koyejo_Zhong_Ravikumar18} for Gaussian Generative Model, local polynomial estimator~\cite{Audibert_Tsybakov07} for $\beta$-H\"older smooth regression function $\eta(\cdot, s)$, regularized logistic regression~\cite{Sara08} for Generalized Linear Model, $k$-Nearest Neighbors estimator~\cite{Devroye78} for Lipschitz regression function $\eta(\cdot, s)$, and
random forest type estimators in various settings~\cite{Brieman04,Genuer12,Arlot_Genuer14,Scornet_Biau_Vert15}.
  \\
  The {second} part of Assumption \ref{ass:estimator_heta_is_consistent} means that $\Exp_{X | S = s}[\heta(X , s)]$ is lower bounded by a positive term vanishing as $N, n$ grow to infinity.  This condition can be introduced artificially to any predefined estimator.  Indeed, assume that we have a consistent estimator $\tilde \eta$ and let $\heta(x, s) = \max\{\tilde\eta(x, s), c_{n, N}\}$, then the second item of the assumption is satisfied in even a stronger form.
  Moreover, this estimator $\heta$ remains consistent, since using the triangle inequality and the fact that $\absin{\heta(x ,s) - \tilde\eta(x, s)} \leq c_{n, N}$ for all $x \in \bbR^d$, we have
  \begin{align*}
    \Exp_{\data_n}\Exp_{X | S = s}\abs{\eta(X ,s) - \heta(X, s)}
    &\leq
    \Exp_{\data_n}\Exp_{X | S = s}\abs{\eta(X ,s) - \tilde\eta(X, s)} + c_{n, N} \rightarrow 0\enspace.
  \end{align*}
\end{remark}

Additionally, we impose one more condition on the estimator $\heta$ that was already successfully used in the context of confidence set classification~\cite{Denis_Hebiri15,Chzhen_Denis_Hebiri19}.
\begin{assumption}
    \label{ass:continuity_estimator}
    The estimator $\heta$ is such that for all $s \in \{0, 1\}$ the mapping
    \begin{align*}
        t \mapsto \Probcond{\heta(X, s) \leq t}{S = s}\enspace,
    \end{align*}
    is continuous on $(0, 1)$ almost surely.
\end{assumption}
In our settings this assumption allows us to show that the value of $\hat{\Delta}(\hat g, \Prob)$ cannot be large, that is, the empirical unfairness of the proposed procedure is small or zero.
As we shall see, a control on the empirical unfairness $\hat{\Delta}(\hat g, \Prob)$ in Definition \ref{def:empirical_unfairness} is crucial in proving that the proposed procedure $\hat g$ achieves both asymptotic fairness and risk consistency.
\begin{remark}
    Assumption~\ref{ass:continuity_estimator} is equivalent to say that there are no atoms in the estimated regression function. It can be fulfilled by a simple modification of any preliminary estimator, by adding a \emph{small} deterministic ``noise'', the amplitude of which must be decreasing with $n, N$ in order to preserve statistical consistency.
\end{remark}
Our remarks suggest that both Assumptions~\ref{ass:estimator_heta_is_consistent} and~\ref{ass:continuity_estimator} can be easily satisfied in a variety of practical settings and the most demanding part of these assumptions is the consistency of $\heta$. 

The next result establishes the statistical consistency of the proposed algorithm.
\begin{theorem}[Asymptotic properties]
    \label{thm:as_fairness}
    Under Assumptions~\ref{ass:continuity},~\ref{ass:estimator_heta_is_consistent}, and~\ref{ass:continuity_estimator} the proposed algorithm satisfies
    \begin{align*}
\textstyle
      \lim_{n, N \rightarrow \infty}\Exp_{(\data_n, \data_N)}[{\Delta}(\hat g, \Prob)] = 0\,~~\text{and}~~
      \lim_{n, N \rightarrow \infty} \Exp_{(\data_n, \data_N)}[\risk(\hat g)] \leq \risk(g^*)\enspace.
    \end{align*}
\end{theorem}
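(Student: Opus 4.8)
The plan is to prove the two conclusions separately, but to organise both around the population Lagrangian $L(g,\theta) \eqdef \risk(g) + \theta D(g)$, where
\[
D(g) = \frac{\Exp_{X | S = 1}[\eta(X,1)g(X,1)]}{\Probcond{Y = 1}{S = 1}} - \frac{\Exp_{X | S = 0}[\eta(X,0)g(X,0)]}{\Probcond{Y = 1}{S = 0}}
\]
is the signed equal-opportunity gap (so that $\Delta(g,\Prob) = \abs{D(g)}$), together with its empirical analogue $\hat L(g,\theta) = \hat\risk(g) + \theta \hat D(g)$ built from $\heta$ and the empirical measure of $\data_N$. Repeating the pointwise minimisation behind Proposition~\ref{lem:optimal_rule} shows that, for each $\theta$, the population rule $g_\theta$ minimises $L(\cdot,\theta)$ and the plug-in rule $\hat g_\theta$ minimises $\hat L(\cdot,\theta)$; in particular $g^* = g_{\theta^*}$ is feasible, so $D(g^*)=0$ and $\risk(g^*) = L(g^*,\theta^*) = \min_g L(g,\theta^*)$.

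\emph{Fairness.} First I would show that the empirical unfairness of $\hat g$ is negligible. As $\theta$ sweeps $[-2,2]$ the two ratios defining $\hat\Delta(\hat g_\theta,\Prob)$ are piecewise constant and monotone in opposite directions (group~$1$ acceptance decreasing, group~$0$ acceptance increasing), and the signed gap is $\geq 0$ at $\theta=-2$ and $\leq 0$ at $\theta=2$. Thus this monotone step function crosses zero, and $\hat\Delta(\hat g_{\htheta},\Prob)$ is bounded by the largest jump, which occurs when a single point flips; Assumption~\ref{ass:continuity_estimator} rules out ties, so flips are simple. One such jump changes a ratio by at most $1/(n_s\,\hExp_{X | S = s}[\heta])$, where $n_s$ is the number of group-$s$ points in $\data_N$, and by Assumption~\ref{ass:estimator_heta_is_consistent}(ii) together with concentration of the denominator this is $O\big(1/(N c_{n, N})\big)=o(1)$ thanks to $1/(c_{n, N}\sqrt N)=o(1)$. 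I would then transfer to the population unfairness via
\[
\abs{\Delta(\hat g,\Prob) - \hat\Delta(\hat g,\Prob)} \le \sum_{s\in\{0,1\}} \frac{\abs{\hExp_{X | S = s}[\heta\,\hat g] - \Exp_{X | S = s}[\eta\,\hat g]} + \abs{\hExp_{X | S = s}[\heta] - \Exp_{X | S = s}[\eta]}}{\Probcond{Y = 1}{S = s}},
\]
dividing by the \emph{fixed} positive constant $\Probcond{Y = 1}{S = s}$ (using that the ratios lie in $[0,1]$), so that no $c_{n, N}$ enters the denominator. Each numerator tends to $0$ in expectation by Assumption~\ref{ass:estimator_heta_is_consistent}(i) and a uniform (over the one-parameter threshold family, since $\htheta$ is data-driven) Glivenko--Cantelli bound on $\data_N$, giving $\Exp[\Delta(\hat g,\Prob)]\to 0$.

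\emph{Risk.} Writing $\risk(\hat g) = L(\hat g_{\htheta},\theta^*) - \theta^* D(\hat g)$ and using $\abs{\theta^*}\le 2$ with $\abs{D(\hat g)}=\Delta(\hat g,\Prob)\to 0$, it suffices to prove $L(\hat g_{\htheta},\theta^*) - \min_g L(g,\theta^*) \to 0$. I would insert the empirical Lagrangian at $\htheta$ and telescope:
\[
L(\hat g_{\htheta}, \theta^*) - L(g_{\theta^*},\theta^*) = \underbrace{[L(\hat g_{\htheta},\theta^*) - \hat L(\hat g_{\htheta},\htheta)]}_{(I)} + \underbrace{[\hat L(\hat g_{\htheta},\htheta) - \hat L(g_{\theta^*},\htheta)]}_{(II)} + \underbrace{[\hat L(g_{\theta^*},\htheta) - L(g_{\theta^*},\theta^*)]}_{(III)}.
\]
Here $(II)\le 0$ since $\hat g_{\htheta}$ minimises $\hat L(\cdot,\htheta)$. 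In $(I)$ and $(III)$ I would split off the pure change in multiplier: the resulting terms are $(\theta^*-\htheta)\hat D(\hat g_{\htheta})$, with $\hat D(\hat g_{\htheta})=\pm\hat\Delta(\hat g,\Prob)\to 0$, and $(\htheta-\theta^*)\hat D(g_{\theta^*})$, with $\hat D(g_{\theta^*})\to D(g^*)=0$ by the law of large numbers; since $\abs{\theta^*-\htheta}\le 4$, both vanish \emph{without} requiring $\htheta\to\theta^*$. The leftover pieces are population-minus-empirical gaps at the fixed multiplier $\theta^*$, namely $(\risk-\hat\risk)$ and $\theta^*(D-\hat D)$ evaluated at $\hat g_{\htheta}$ and at $g_{\theta^*}$, and these tend to $0$ by Assumption~\ref{ass:estimator_heta_is_consistent}(i) and the same uniform concentration. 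Hence $(I)+(II)+(III)\le o(1)$, which yields $\limsup_{n,N}\Exp[\risk(\hat g)]\le \risk(g^*)$.

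\emph{Main obstacle.} The crux is the risk bound. A naive plug-in analysis would require controlling $\htheta-\theta^*$, which need not converge, as the dual solution may be non-unique and is only pinned down through a finite-sample fairness equation. The decomposition above circumvents this by always pairing a factor $\htheta-\theta^*$ — bounded by $4$ precisely because $\abs{\theta^*}\le 2$ in Proposition~\ref{lem:optimal_rule} — with a quantity that provably vanishes ($\hat\Delta(\hat g,\Prob)$ or $\hat D(g^*)$). The most delicate technical estimate is the bound on the minimal empirical unfairness by a single-point jump: the rate condition $1/(c_{n, N}\sqrt N)=o(1)$ of Assumption~\ref{ass:estimator_heta_is_consistent}(ii) is exactly what is needed to beat the $1/N$ size of a jump once the denominator is only known to exceed $c_{n, N}$.
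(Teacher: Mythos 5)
Your proposal is correct in outline, but it takes a genuinely different route from the paper's. For the risk, the paper introduces a pseudo-oracle $\tilde g$ built from $\heta$ but with exact knowledge of $\Prob_{X|S}$ and $\Prob_S$, with $\ttheta$ solving an exact empirical-fairness equation, splits the excess risk as $[\risk(\tilde g)-\risk(g^*)]+[\risk(\hat g)-\risk(\tilde g)]$, and controls both pieces via a positive-part representation of the risk (Lemma~\ref{lem:fpr_nice_formula}) and the convex first-order optimality of $\theta^*$ and $\ttheta$; its most delicate estimate is an empirical process indexed by $t\in[-2/c_{n,N},\,2/c_{n,N}]$, which is where $1/(c_{n,N}\sqrt{N})=o(1)$ enters the risk part. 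Your Lagrangian telescoping $(I)+(II)+(III)$ eliminates the pseudo-oracle entirely: the observation that $\hat g_\theta$ exactly minimizes the empirical Lagrangian $\hat L(\cdot,\theta)$ is correct (the pointwise minimizer is precisely the paper's $\hat g_\theta$, since $\hProb(Y=1,S=s)=\hExp_{X|S=s}[\heta(X,s)]\,\hProb_S(S=s)$), and pairing the bounded factor $\htheta-\theta^*$ with vanishing quantities $\hat D(\hat g_{\htheta})$ and $\hat D(g_{\theta^*})$ sidesteps any convergence of $\htheta$, just as the paper's convexity argument does by a different mechanism. What your route buys: in the risk part only empirical processes uniform over thresholds $t\in[0,1]$ appear (cost $C/\sqrt{N}$, the paper's Lemma~\ref{lem:emp_proc}), so $c_{n,N}$ intervenes solely through your jump bound; what the paper's route buys is an explicit nonasymptotic bound for every piece, supporting its remark on finite-sample rates. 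For fairness, you bound $\min_\theta \hat\Delta(\hat g_\theta,\Prob)$ combinatorially by the largest jump $O(1/(N c_{n,N}))$ of the two monotone empirical step functions, whereas the paper uses Assumption~\ref{ass:continuity_estimator} to produce $\theta'(\data_N)$ at which the $\Exp_{X|S=s}$-level criterion vanishes exactly, paying two empirical-process terms; both uses of the continuity assumption are legitimate (yours additionally needs almost-sure simplicity of flips across the two groups, which continuity provides). Two details you leave implicit and the paper formalizes: the group counts $N_s$ and empirical denominators $\hExp_{X|S=s}[\heta(X,s)]$ can degenerate (handled via Remark~\ref{rm:size_of_data}, Lemma~\ref{lem:binomial_moment_inverse}, and multiplicative Chernoff), and your ``concentration of the denominator'' step requires $N c_{n,N}\to\infty$, which does follow from Assumption~\ref{ass:estimator_heta_is_consistent}(ii); with these routine additions your argument is complete.
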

\begin{proof}[Proof sketch]
In order to establish statistical consistency of the proposed procedure,
we follow the strategy of~\cite{Denis_Hebiri15,Chzhen_Denis_Hebiri19}, that is, we first introduce an intermediate pseudo-estimator $\tilde g$ as follows
\begin{align}
\textstyle
  \tilde{g}(x, 1)
    {=}
    \ind{1 \leq \heta(x, 1)\parent{2 - \frac{\ttheta}{\Exp_{X | S = 1}[\heta(X, 1)]\Prob(S = 1)}}},\
    \tilde{g}(x, 0)
    {=}
    \ind{1 \leq \heta(x, 0)\parent{2 + \frac{\ttheta}{ \Exp_{X | S = 0}[\heta(X, 0)]\Prob(S = 0)}}},
\end{align}
where $\ttheta$ is chosen such that
  \begin{align}
    \frac{\Exp_{X | S = 1}\left[\heta(X, 1)\tilde{g}(X, 1)\right]}{\Exp_{X | S = 1}[\heta(X, 1)]}
    =
    &\frac{\Exp_{X | S = 0}\left[\heta(X, 0) \tilde{g}(X, 0)\right]}{\Exp_{X | S = 0}[\heta(X, 0)]}\enspace.
\end{align}
Note that by  Assumption~\ref{ass:continuity_estimator} such a value $\ttheta$ always exists.
Intuitively, the classifier $\tilde g$ ``\emph{knows}'' the marginal distribution of $(X, S)$, that is, it knows both $\Prob_{X | S}$ and $\Prob_S$.
It is seen as an idealized version of $\hat g$, where the uncertainty is only induced by the lack of knowledge of the regression function $\eta$.

We express the excess risk as a sum of two terms, ${\Exp_{\data_n}[{\risk(\tilde g)} ]- \risk(g^*)} + {\Exp_{(\data_n, \data_N)}[\risk{(\hat g)} - \risk(\tilde g)]}$.
We show that the first can be bounded by the $\ell_1$ distance between $\heta$ and $\eta$, and thanks to the consistency of $\heta$ it does converge to zero.
The handling of the second term is move involved, but we are able to show that it reduces to a study of suprema of empirical processes conditionally on the labeled sample $\data_n$.

To demonstrate that the proposed algorithm is asymptotically fair, we first show that
\begin{align*}
    \Exp_{(\data_n, \data_N)}[\Delta(\hat g, \Prob)] \leq \Exp_{(\data_n, \data_N)}[\hat\Delta(\hat g, \Prob)] + o_{n, N}(1)\enspace.
\end{align*}
At last, the continuity Assumption~\ref{ass:continuity_estimator} alongside with means of theory of empirical processes allow to demonstrate that the term $\Exp_{(\data_n, \data_N)}[\hat\Delta(\hat g, \Prob)]$ converges to zero when $N$ growth.
\end{proof}
\begin{remark}
    Let us mention that it is possible to present our result in a finite sample regime, since our proof of consistency is based on non-asymptotic theory of empirical processes.
    However, the actual rate of convergence depends on the rate of $\ell_1$-norm estimation of the regression function $\eta$, which can vary significantly from one setup to another.
    That is why we decided to present our result in the asymptotic sense.
\end{remark}
\section{Experimental results}
\label{sec:experiments}
%

In this section, we present numerical experiments with the proposed method.
The source code we used to perform the experiments can be found at \url{https://github.com/lucaoneto/NIPS2019_Fairness}.

We follow the protocol outlined in~\cite{Donini_Oneto_Ben-David_Taylor_Pontil18}. We consider the following datasets: Arrhythmia, COMPAS, Adult, German, and Drug\footnote{For more information about these datasets please refer to~\cite{Donini_Oneto_Ben-David_Taylor_Pontil18}.} and compare the following algorithms: Linear Support Vector Machines (Lin.SVM), Support Vector Machines with the Gaussian 
kernel (SVM), Linear Logistic Regression (Lin.LR), Logistic Regression with the Gaussian kernel (LR), Hardt method~\cite{hardt2016equality} to all approaches (Hardt), Zafar method~\cite{zafar2017fairness} implemented with the code provided by the authors for the linear case\footnote{Python code for~\cite{zafar2017fairness}: \url{https://github.com/mbilalzafar/fair-classification}}, the Linear (Lin.Donini) and the Non Linear methods (Donini) proposed in~\cite{Donini_Oneto_Ben-David_Taylor_Pontil18} and freely available\footnote{Python code for~\cite{Donini_Oneto_Ben-David_Taylor_Pontil18}: \url{https://github.com/jmikko/fair_ERM}}, and also Random Forests (RF).
Then, since Lin.SVM, SVM, Lin.LR, LR, and RF have also the possibility to output a probability together with the classification, we applied our method in all these cases.

\begin{table}[t!]
\scriptsize
\centering
\setlength{\tabcolsep}{0.05cm}
\begin{tabular}{|l|c|c|c|c|c|c|c|c|c|c|}
\hline
\hline
& \multicolumn{2}{c|}{Arrhythmia}
& \multicolumn{2}{c|}{COMPAS}
& \multicolumn{2}{c|}{Adult}
& \multicolumn{2}{c|}{German}
& \multicolumn{2}{c|}{Drug} \\
Method
& ACC & DEO
& ACC & DEO
& ACC & DEO
& ACC & DEO
& ACC & DEO \\
\hline
\hline
Lin.SVM
& $0.78 {\pm} 0.07$ & $0.13 {\pm} 0.04$
& $0.75 {\pm} 0.01$ & $0.15 {\pm} 0.02$
& $0.80$            & $0.13$
& $0.69 {\pm} 0.04$ & $0.11 {\pm} 0.10$
& $0.81 {\pm} 0.02$ & $0.41 {\pm} 0.06$ \\
Lin.LR
& $0.79 {\pm} 0.06$ & $0.13 {\pm} 0.05$
& $0.76 {\pm} 0.02$ & $0.16 {\pm} 0.02$
& $0.81$            & $0.12$
& $0.67 {\pm} 0.05$ & $0.12 {\pm} 0.11$
& $0.80 {\pm} 0.01$ & $0.42 {\pm} 0.05$ \\
Lin.SVM+Hardt
& $0.74 {\pm} 0.06$ & $0.07 {\pm} 0.04$
& $0.67 {\pm} 0.03$ & $0.21 {\pm} 0.09$
& $0.80$            & $0.10$
& $0.61 {\pm} 0.15$ & $0.15 {\pm} 0.13$
& $0.77 {\pm} 0.02$ & $0.22 {\pm} 0.09$ \\
Lin.LR+Hardt
& $0.75 {\pm} 0.04$ & $0.08 {\pm} 0.05$
& $0.67 {\pm} 0.02$ & $0.18 {\pm} 0.07$
& $0.81$            & $0.09$
& $0.62 {\pm} 0.05$ & $0.13 {\pm} 0.09$
& $0.76 {\pm} 0.01$ & $0.18 {\pm} 0.04$ \\
Zafar
& $0.71 {\pm} 0.03$ & $0.03 {\pm} 0.02$
& $0.69 {\pm} 0.02$ & $0.10 {\pm} 0.06$
& $0.78$            & $0.05$
& $0.62 {\pm} 0.09$ & $0.13 {\pm} 0.11$
& $0.69 {\pm} 0.03$ & $0.02 {\pm} 0.07$ \\
Lin.Donini
& $0.79 {\pm} 0.07$ & $0.04 {\pm} 0.03$
& $0.76 {\pm} 0.01$ & $0.04 {\pm} 0.03$
& $0.77$            & $0.01$
& $0.69 {\pm} 0.04$ & $0.05 {\pm} 0.03$
& $0.79 {\pm} 0.02$ & $0.05 {\pm} 0.03$ \\
Lin.SVM+Ours
& $0.75 {\pm} 0.08$ & $0.04 {\pm} 0.04$
& $0.73 {\pm} 0.01$ & $0.05 {\pm} 0.02$
& $0.79$            & $0.03$
& $0.68 {\pm} 0.04$ & $0.04 {\pm} 0.03$
& $0.78 {\pm} 0.02$ & $0.01 {\pm} 0.02$ \\
Lin.LR+Ours
& $0.75 {\pm} 0.06$ & $0.04 {\pm} 0.05$
& $0.74 {\pm} 0.02$ & $0.06 {\pm} 0.02$
& $0.80$            & $0.03$
& $0.67 {\pm} 0.05$ & $0.04 {\pm} 0.03$
& $0.77 {\pm} 0.03$ & $0.02 {\pm} 0.02$ \\
\hline
\hline
SVM
& $0.78 {\pm} 0.06$ & $0.13 {\pm} 0.04$
& $0.73 {\pm} 0.01$ & $0.14 {\pm} 0.02$
& $0.82$            & $0.14$
& $0.74 {\pm} 0.03$ & $0.10 {\pm} 0.06$
& $0.81 {\pm} 0.04$ & $0.38 {\pm} 0.03$ \\
LR
& $0.79 {\pm} 0.05$ & $0.12 {\pm} 0.04$
& $0.74 {\pm} 0.01$ & $0.14 {\pm} 0.02$
& $0.81$            & $0.15$
& $0.75 {\pm} 0.03$ & $0.11 {\pm} 0.06$
& $0.82 {\pm} 0.01$ & $0.37 {\pm} 0.03$ \\
RF
& $0.83 {\pm} 0.03$ & $0.09 {\pm} 0.02$
& $0.77 {\pm} 0.02$ & $0.11 {\pm} 0.02$
& $0.86$            & $0.12$
& $0.78 {\pm} 0.02$ & $0.09 {\pm} 0.04$
& $0.86 {\pm} 0.01$ & $0.29 {\pm} 0.02$ \\
SVM+Hardt
& $0.74 {\pm} 0.06$ & $0.07 {\pm} 0.04$
& $0.71 {\pm} 0.02$ & $0.08 {\pm} 0.02$
& $0.82$            & $0.11$
& $0.71 {\pm} 0.03$ & $0.11 {\pm} 0.18$
& $0.75 {\pm} 0.11$ & $0.14 {\pm} 0.08$ \\
LR+Hardt
& $0.73 {\pm} 0.05$ & $0.10 {\pm} 0.04$
& $0.70 {\pm} 0.02$ & $0.09 {\pm} 0.02$
& $0.80$            & $0.12$
& $0.72 {\pm} 0.04$ & $0.09 {\pm} 0.06$
& $0.77 {\pm} 0.03$ & $0.11 {\pm} 0.04$ \\
RF+Hardt
& $0.79 {\pm} 0.03$ & $0.07 {\pm} 0.01$
& $0.76 {\pm} 0.01$ & $0.07 {\pm} 0.02$
& $0.83$            & $0.05$
& $0.76 {\pm} 0.02$ & $0.06 {\pm} 0.04$
& $0.82 {\pm} 0.01$ & $0.09 {\pm} 0.02$ \\
Donini
& $0.79 {\pm} 0.09$ & $0.03 {\pm} 0.02$
& $0.73 {\pm} 0.01$ & $0.05 {\pm} 0.03$
& $0.81$            & $0.01$
& $0.73 {\pm} 0.04$ & $0.05 {\pm} 0.03$
& $0.80 {\pm} 0.03$ & $0.07 {\pm} 0.05$ \\
SVM+Ours
& $0.77 {\pm} 0.07$ & $0.04 {\pm} 0.02$
& $0.72 {\pm} 0.02$ & $0.06 {\pm} 0.02$
& $0.80$            & $0.02$
& $0.73 {\pm} 0.03$ & $0.04 {\pm} 0.06$
& $0.79 {\pm} 0.02$ & $0.05 {\pm} 0.01$ \\
LR+Ours
& $0.77 {\pm} 0.06$ & $0.04 {\pm} 0.02$
& $0.73 {\pm} 0.01$ & $0.06 {\pm} 0.02$
& $0.80$            & $0.02$
& $0.73 {\pm} 0.02$ & $0.04 {\pm} 0.06$
& $0.80 {\pm} 0.01$ & $0.05 {\pm} 0.02$ \\
RF+Ours
& $0.81 {\pm} 0.04$ & $0.03 {\pm} 0.01$
& $0.76 {\pm} 0.02$ & $0.04 {\pm} 0.02$
& $0.85$            & $0.03$
& $0.77 {\pm} 0.02$ & $0.02 {\pm} 0.02$
& $0.83 {\pm} 0.01$ & $0.04 {\pm} 0.02$ \\
\hline
\hline
\end{tabular}
\caption{Results (average $\pm$ standard deviation, when a fixed test set is not provided) for all the datasets, concerning ACC and DEO.}
\label{tab:results}
\end{table}
\begin{figure}[t!]
\centering
\includegraphics[width=0.5\columnwidth]{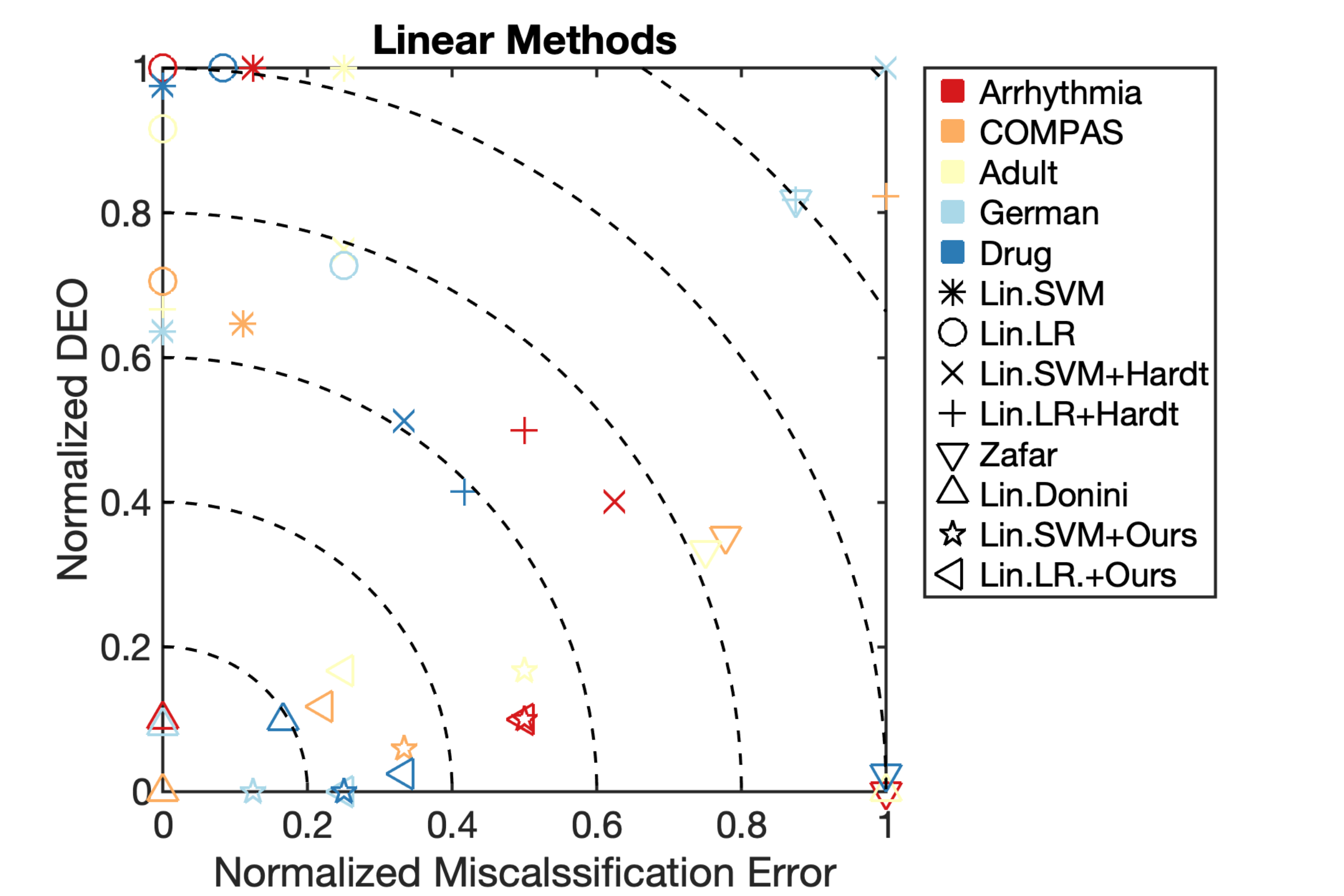} \!\!\!\!\!\!\!\!\!\!\!\!\!
\includegraphics[width=0.5\columnwidth]{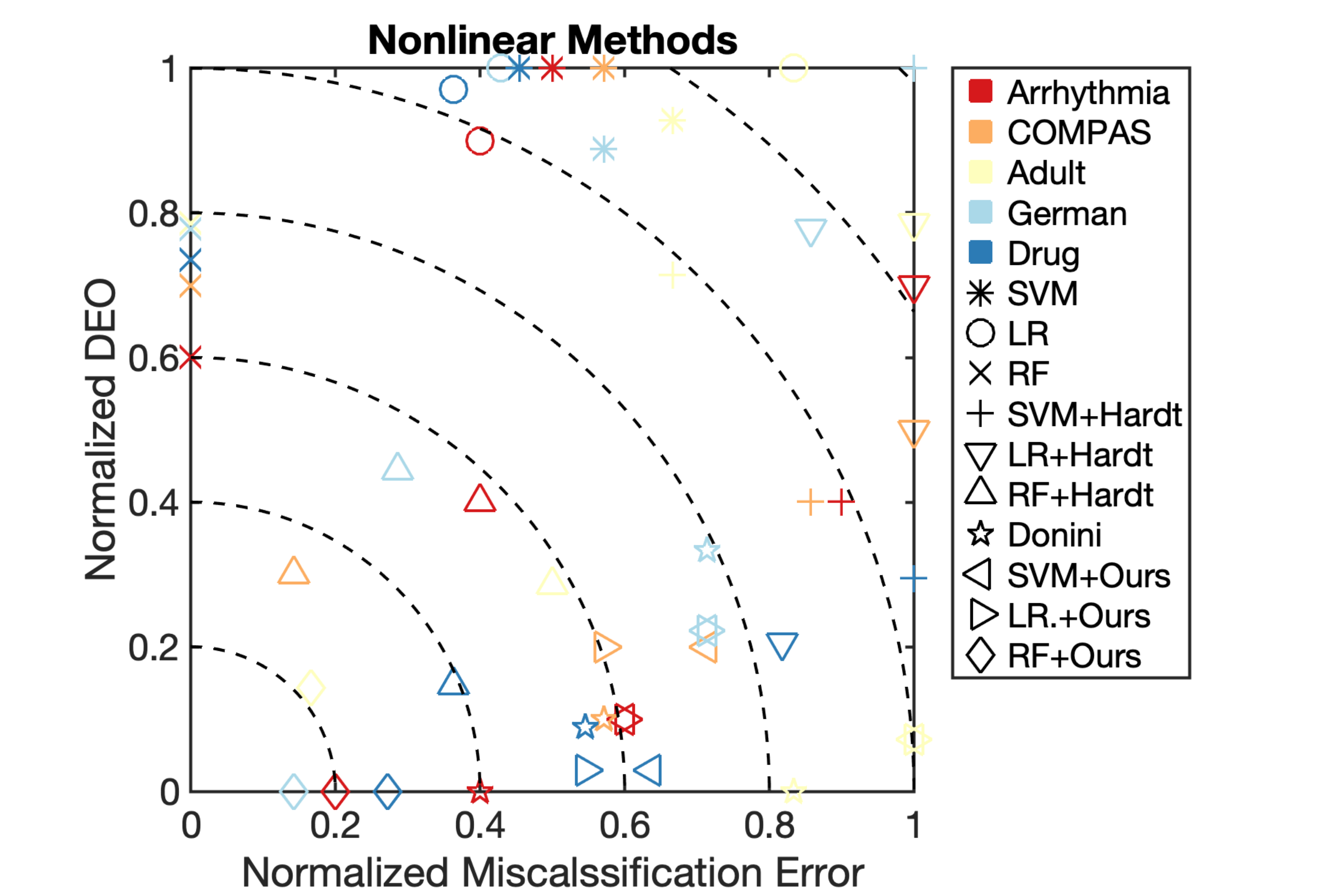}
\caption{Results of Table~\ref{tab:results} of linear (left) and nonlinear (right) methods when the error and the DEO are normalized in $[0,1]$ column-wise.
Different colors and symbols refer to different datasets and method respectively.
The closer a point is to the origin, the better the result is.}
\label{fig:tableexplanation}
\end{figure}


\begin{table}[t!]
\scriptsize
\centering
\setlength{\tabcolsep}{0.05cm}
\begin{tabular}{|l|c|c|c|c|c|c|c|c|c|c|}
\hline
\hline
& \multicolumn{2}{c|}{COMPAS}
& \multicolumn{2}{c|}{Adult} \\
RF+Ours
& ACC & DEO
& ACC & DEO \\
\hline
\hline
$\mathcal{D}_n {=} \sfrac{1}{10}$
& $0.68 \pm 0.03$ & $0.07 \pm 0.02$
& $0.79 \pm 0.02$ & $0.06 \pm 0.02$ \\
$\mathcal{D}_n {=} \sfrac{1}{10}$, $\mathcal{D}_N {=} \sfrac{1}{10}$
& $0.68 \pm 0.03$ & $0.07 \pm 0.02$
& $0.79 \pm 0.02$ & $0.06 \pm 0.02$ \\
$\mathcal{D}_n {=} \sfrac{1}{10}$, $\mathcal{D}_N {=} \sfrac{2}{10}$
& $0.68 \pm 0.03$ & $0.07 \pm 0.02$
& $0.79 \pm 0.02$ & $0.06 \pm 0.02$ \\
$\mathcal{D}_n {=} \sfrac{1}{10}$, $\mathcal{D}_N {=} \sfrac{4}{10}$
& $0.70 \pm 0.02$ & $0.06 \pm 0.02$
& $0.79 \pm 0.02$ & $0.05 \pm 0.01$ \\
$\mathcal{D}_n {=} \sfrac{1}{10}$, $\mathcal{D}_N {=} \sfrac{8}{10}$
& $0.71 \pm 0.02$ & $0.05 \pm 0.01$
& $0.80 \pm 0.02$ & $0.04 \pm 0.01$ \\
\hline
\hline
\end{tabular}
\caption{Impact of the size of the unlabeled dataset on
ACC and DEO.
The size of the labeled sample $\mathcal{D}_n$ is fixed to $1/10$ of the original dataset.
The unlabeled $\mathcal{D}_N$ is initially empty (as in previous experiments of Table~\ref{tab:results}), and then increases from $1/10$ to $8/10$ of the original dataset.
\label{tab:results2Main}}
\end{table}

In all experiments, we collect statistics concerning the classification accuracy (ACC), namely probability to correctly classify a sample, and the Difference of Equal Opportunity (DEO) in Definition \ref{def:fair_classifiers}.
For Arrhythmia, COMPAS, German and Drug datasets we split the data in two parts (70\% train and 30\% test), this procedure is repeated $30$ times, and we reported the average performance on the test set alongside its standard deviation.
For the Adult dataset, we used the provided split of train and test sets.
Unless otherwise stated, we employ two steps in the 10-fold CV procedure proposed in~\cite{Donini_Oneto_Ben-David_Taylor_Pontil18} to select the best hyperparameters with the training set\footnote{The regularization parameter (for all method) and the RBF kernel with $30$ values, equally spaced in logarithmic scale
between $10^{-4}$ and $10^{4}$.
For RF the number of trees has been set to $1000$ and the size of the subset of features optimized at each node has been search in $\{ d, \lceil d^{\sfrac{15}{16}} \rceil, \lceil d^{\sfrac{7}{8}} \rceil, \lceil d^{\sfrac{3}{4}} \rceil, \lceil d^{ \sfrac{1}{2} } \rceil , \lceil d^{\sfrac{1}{4}} \rceil, \lceil d^{ \sfrac{1}{8} } \rceil, \lceil d^{\sfrac{1}{16}} \rceil, 1 \}$ where $d$ is the number of features in the dataset.}.
In the first step, the value of the hyperparameters with the highest accuracy is identified.
In the second step, we shortlist all the hyperparameters with accuracy close to the best one (in our case, above $90 \%$ of the best accuracy).
Finally, from this list, we select the hyperparameters with the lowest DEO.

We also present in Figure~\ref{fig:tableexplanation} the results of Table~\ref{tab:results} for linear (left) and nonlinear (right) methods, when the error (one minus ACC) and the DEO are normalized in $[0,1]$ column-wise.
In the figure, different colors and symbols refer to different datasets and methods, respectively.
The closer a point is to the origin, the better the result is.

From Table~\ref{tab:results} and Figure~\ref{fig:tableexplanation} it is possible to observe that the proposed method outperforms all methods except the one of~\cite{Donini_Oneto_Ben-David_Taylor_Pontil18} for which we obtain comparable performance. Nevertheless, note that our method is more general than the one of~\cite{Donini_Oneto_Ben-David_Taylor_Pontil18}, since it can be applied to any algorithms which return a probability estimator (better if consistent since this will allow us to have a full consistent approach also from the fairness point of view).
In fact, on these datasets, RF, which cannot be made trivially fair with the approach proposed in~\cite{Donini_Oneto_Ben-David_Taylor_Pontil18}, outperforms all the available methods.

Note that the results reported in Table~\ref{tab:results} differ from the one reported in~\cite{Donini_Oneto_Ben-David_Taylor_Pontil18} since the proposed method requires the knowledge of the sensitive variable at classification time, so Table~\ref{tab:results} reports just this case. That is, the functional form of the model explicitly depends on the sensitive variable $s \in \{0, 1\}$. Many authors, point out that this may not be permitted in several practical scenarios (see~e.g.~\cite{dwork2018decoupled,roemer2015equality} and reference therein). Yet, removing the sensitive variable from the functional form of the model does not ensure that the sensitive variable is not considered by the model itself. We refer to~\cite{OnetoC062} for the in-depth discussion on this issue.
Further, the method in \cite{hardt2016equality} explicitly requires the knowledge of the sensitive variable for their thresholding procedure.
In Appendix~\ref{sec:optimal_classifier_independent_from_sensitive_feature} we show how to modify our method in order to derive a fair optimal classifier without the sensitive variable $s$ in the functional form of the model.
Moreover, we propose a modification of our approach which does not use $s$ at decision time and perform additional numerical comparison in this context.
We arrive to similar conclusions about the performance of our method as in this section.
Yet, the consistency results are not available for this methods and are left for future investigation.

In Table~\ref{tab:results2Main} we demonstrate the
impact of the unlabeled data size on the performance of the proposed algorithm.
Since the above benchmark datasets are not provide with additional unlabeled data, we deploy the following data generation procedure:
we randomly select $1/10$ observations in each dataset and assign it to the labeled sample $\data_n$; consequently, the size of the unlabeled sample $\data_N$ increases from $0$ to $8 / 10$ samples that were not assigned to the labeled sample $\data_n$.
This data generation procedure is applied to COMPAS and Adult datasets.
Finally, we apply our method using the random forest algorithm using the cross-validation scheme employed in the previous experiments.
The above above pipeline is repeated $30$ times and the variance of the results is reported in Table~\ref{tab:results2Main}.
We can see that both DEO and ACC are improving with $N$, highlighting the importance of the unlabeled data.
We believe that the improvement could have been more significant if the unlabeled data were provided initially.

\section{Conclusion}

Using the notion of equal opportunity we have derived a form of the fair optimal classifier based on group-dependent threshold.
Relying on this result we have proposed a semi-supervised plug-in method which enjoys strong theoretical guarantees under mild assumptions.
Importantly, our algorithm can be implemented on top of any base classifier which has conditional probabilities as outputs.
We have conducted an extensive numerical evaluation comparing our procedure against the state-of-the-art approaches and have demonstrated that our procedure performs well in practice.
In future works we would like to extend our analysis to other fairness measures as well as provide consistency results for the algorithm which does not use the sensitive feature at the decision time.
Finally, we note that our consistency result is constructive and could be used to derive non-asymptotic rates of convergence for the proposed method, relying upon available rates for the regression function estimator.

\subsubsection*{Acknowledgments}
This work was supported in part by SAP SE, by the Amazon AWS Machine Learning Research Award and by the Labex B\'ezout of Universit\'e Paris-Est.

\bibliographystyle{plain}
\bibliography{biblio}

\newpage
\appendix
\begin{center}
    \Large\bf Supplementary material for ``Leveraging Labeled and Unlabeled Data for Consistent Fair Binary Classification''
\end{center}
\section{Optimal classifier}

\begin{proof}[Proof of Proposition~\ref{lem:optimal_rule}]

Let us study the following minimization problem
\begin{align*}
    (*) \eqdef \min_{g \in \class{G}}\ens{\risk(g) : {\Probcond{g(X, S) {=} 1}{Y {=} 1, S {=} 1} = \Probcond{g(X, S) {=} 1}{Y {=} 1, S {=} 0}}}\enspace.
\end{align*}
Using the weak duality we can write
\begin{align*}
    (*)
    &=
    \min_{g \in \class{G}}\max_{\lambda \in \bbR}\ens{\risk(g) + \lambda\parent{\Probcond{g(X, S) {=} 1}{Y {=} 1, S {=} 1} - \Probcond{g(X, S) {=} 1}{Y {=} 1, S {=} 0}}}\\
    &\geq
    \max_{\lambda \in \bbR}\min_{g \in \class{G}}\ens{\risk(g) + \lambda\parent{\Probcond{g(X, S) {=} 1}{Y {=} 1, S {=} 1} - \Probcond{g(X, S) {=} 1}{Y {=} 1, S {=} 0}}}\defeq (**)\enspace.
\end{align*}
We first study the objective function of the max min problem $(**)$, which is equal to
\begin{align*}
    \Prob(g(X, S) \neq Y) + \lambda\parent{\Probcond{g(X, S) {=} 1}{Y {=} 1, S {=} 1} - \Probcond{g(X, S) {=} 1}{Y {=} 1, S {=} 0}}\enspace.
\end{align*}
The first step of the proof is to simplify the expression above to linear functional of the classifier $g$.
Notice that we can write for the first term
\begin{align*}
    \Prob(g(X, S) \neq Y) &=
    \Prob(g(X, S) {=} 0, Y {=} 1) + \Prob(g(X, S) {=} 1, Y {=} 0)\\
    &=
    \Prob(g(X, S) {=} 1) + \Prob(Y {=} 1) - \Prob(g(X, S) {=} 1, Y {=} 1) - \Prob(g(X, S) {=} 1, Y {=} 1)\\
    &=
    \Prob(g(X, S) {=} 1) + \Prob(Y {=} 1) - 2\Prob(g(X, S) {=} 1, Y {=} 1)\\
    &=
    \Prob(Y {=} 1) + \Exp[g(X, S)] - 2\Expcond{\ind{g(X, S) {=} 1, Y {=} 1}}{S {=} 1}\Prob(S {=} 1) \\
    & \quad - 2\Expcond{\ind{g(X, S) {=} 1, Y {=} 1}}{S {=} 0}\Prob(S {=} 0)\\
    &=
    \Prob(Y {=} 1) + \Exp[g(X, S)] - 2\Exp_{X | S {=} 1}[g(X, 1)\eta(X, 1)]\Prob(S {=} 1)\\
    & \quad  - 2\Exp_{X | S {=} 0}[g(X, 0)\eta(X, 0)]\Prob(S {=} 0)\\
    &=
    \Prob(Y {=} 1) - \Exp_{X | S {=} 1}[g(X, 1)(2\eta(X, 1) - 1)]\Prob(S {=} 1)\\
    & \quad  - \Exp_{X | S {=} 0}[g(X, 0)(2\eta(X, 0) - 1)]\Prob(S {=} 0)\enspace,
\end{align*}
moreover, we can write for the rest
\begin{align*}
    \Probcond{g(X, S) = 1}{Y = 1, S = 1}
    &=
    \frac{\Probcond{g(X, S) = 1, Y = 1}{S = 1}}{\Probcond{Y = 1}{S = 1}}
    =
    \frac{\Exp_{X | S = 1}[g(X, 1)\eta(X, 1)]}{\Probcond{Y = 1}{S = 1}}\enspace,\\
    \Probcond{g(X, S) = 1}{Y = 1, S = 0}
    &=
    \frac{\Probcond{g(X, S) = 1, Y = 1}{S = 0}}{\Probcond{Y = 1}{S = 0}}
    =
    \frac{\Exp_{X | S = 0}[g(X, 0)\eta(X, 0)]}{\Probcond{Y = 1}{S = 0}}\enspace.
\end{align*}
Using these, the objective of $(**)$ can be simplified as
\begin{align*}
  \Prob(Y = 1)
  &+
  \Exp_{X | S = 1}\left[g(X, 1)\parent{\eta(X, 1)\parent{\frac{\lambda}{\Probcond{Y = 1}{S = 1}} - 2\Prob(S = 1)} + \Prob(S = 1)}\right]\\
  &+
  \Exp_{X | S = 0}\left[g(X, 0)\parent{\eta(X, 0)\parent{-\frac{\lambda}{\Probcond{Y = 1}{S = 0}} - 2\Prob(S = 0)} + \Prob(S = 0)}\right]\enspace.
\end{align*}
Clearly, for every $\lambda \in \bbR$ a minimizer $g^*_\lambda$ of the problem $(**)$ can be written for all $x \in \bbR^d$ as
\begin{align*}
    g^*_{\lambda}(x, 1)
    &=
    \ind{\eta(X, 1)\parent{\frac{\lambda}{\Probcond{Y = 1}{S = 1}} - 2\Prob(S = 1)} + \Prob(S = 1) \leq 0}
    =
    \ind{1 - \eta(X, 1)\parent{2 - \frac{\lambda}{\Prob\parent{Y = 1, S = 1}}} \leq 0}
    \\
    g^*_{\lambda}(x, 0)
    &=
    \ind{\eta(X, 0)\parent{-\frac{\lambda}{\Probcond{Y = 1}{S = 0}} - 2\Prob(S = 0)} + \Prob(S = 0) \leq 0}
    =
    \ind{1 - \eta(X, 0)\parent{2 + \frac{\lambda}{\Prob\parent{Y = 1, S = 0}}} \leq 0}\enspace.
\end{align*}
At this moment it is interesting to reflect on this result.
Indeed, for $\lambda = 0$ we recover the classical optimal predictor in the context of binary classification.
Substituting this classifier into the objective of $(**)$ we arrive at
\begin{align*}
  (**) = \Prob(Y = 1) -
  &\min_{\lambda \in \bbR}
  \Biggl\{
  \Exp_{X | S = 1}\parent{\eta(X, 1)\parent{2\Prob(S = 1)-\frac{\lambda}{\Probcond{Y = 1}{S = 1}}} - \Prob(S = 1)}_+\\
  &
  +\Exp_{X | S = 0}\parent{\eta(X, 0)\parent{2\Prob(S = 0) + \frac{\lambda}{\Probcond{Y = 1}{S = 0}}} - \Prob(S = 0)}_+
  \Biggr\}\enspace.
\end{align*}
It is important to observe that the mappings
\begin{align*}
    \lambda
    &\mapsto
    \Exp_{X | S = 1}\parent{\eta(X, 1)\parent{2\Prob(S = 1)-\frac{\lambda}{\Probcond{Y = 1}{S = 1}}} - \Prob(S = 1)}_+\\
    \lambda
    &\mapsto
    \Exp_{X | S = 0}\parent{\eta(X, 0)\parent{2\Prob(S = 0) + \frac{\lambda}{\Probcond{Y = 1}{S = 0}}} - \Prob(S = 0)}_+\enspace,
\end{align*}
are convex, therefore we can write the first order optimality conditions as
\begin{align*}
     0 \in &\partial_{\lambda}\Exp_{X | S = 1}\parent{\eta(X, 1)\parent{2\Prob(S = 1)-\frac{\lambda}{\Probcond{Y = 1}{S = 1}}} - \Prob(S = 1)}_+\\
     &+
     \partial_{\lambda}\Exp_{X | S = 0}\parent{\eta(X, 0)\parent{2\Prob(S = 0) + \frac{\lambda}{\Probcond{Y = 1}{S = 0}}} - \Prob(S = 0)}_+\enspace.
\end{align*}
Clearly, under Assumption~\ref{ass:continuity} this subgradient is reduced to the gradient almost surely, thus we have the following condition on the optimal value of $\lambda^*$
\begin{align*}
  \frac{\Exp_{X | S = 1}\left[\eta(X, 1)g_{\lambda^*}^*(X, 1)\right]}{\Probcond{Y = 1}{S = 1}}
  =
  &\frac{\Exp_{X | S = 0}\left[\eta(X, 0)g_{\lambda^*}^*(X, 0)\right]}{\Probcond{Y = 1}{S = 0}}\enspace,
\end{align*}
and the pair $(\lambda^*, g^*_{\lambda^*})$ is a solution of the dual problem $(**)$.
Notice that the previous condition can be written as
\begin{align*}
  \Probcond{g_{\lambda^*}^*(X, S) = 1}{Y = 1, S = 1} = \Probcond{g_{\lambda^*}^*(X, S) = 1}{Y = 1, S = 0}\enspace.
\end{align*}
This implies that the classifier $g_{\lambda^*}^*$ is fair, that is, it satisfies Definition~\ref{def:fair_classifiers}.
Finally, it remains to show that $g_{\lambda^*}^*$ is actually an optimal classifier, indeed, since $g_{\lambda^*}^*$ is fair we can write on the one hand
\begin{align*}
    \risk(g_{\lambda^*}^*) {\geq} \min_{g \in \class{G}}\enscond{\risk(g)}{\Probcond{g(X, S) = 1}{Y = 1, S = 1} {=} \Probcond{g(X, S) = 1}{Y = 1, S = 0}} {=} (*) .
\end{align*}
On the other hand the pair $(\lambda^*, g_{\lambda^*}^*)$ is a solution of the dual problem $(**)$, thus we have
\begin{align*}
    (*)
    \geq &
    \risk(g_{\lambda^*}^*)
    +
    \lambda^*\parent{\Probcond{g_{\lambda^*}^*(X, S) = 1}{Y = 1, S = 1} - \Probcond{g_{\lambda^*}^*(X, S) = 1}{Y = 1, S = 0}} \\
    & =
     \risk(g_{\lambda^*}^*)\enspace.
\end{align*}
It implies that the classifier $g_{\lambda^*}^*$ is optimal, hence $g^* \equiv g^*_{\lambda^*}$.

Finally, assume that $(2 - \theta^* / \Prob(Y = 1, S = 1)) \leq 0$, then, clearly
    $(2 + \theta^* / \Prob(Y = 1, S = 0)) > 0$, therefore, the condition on $\theta^*$ reads as
     \begin{align*}
    0
    =
    {\Exp_{X | S = 0}\left[\eta(X, 0)\ind{1 \leq \eta(X, 0)\parent{2 + \frac{\theta^*}{\Prob\parent{Y = 1, S = 0}}}}\right]}
    &\geq
    \frac{\Probcond{\eta(X, 0) \geq \frac{1}{\parent{2 + \frac{\theta^*}{\Prob\parent{Y = 1, S = 0}}}}}{S = 0}}{\parent{2 + \frac{\theta^*}{\Prob\parent{Y = 1, S = 0}}}}\\
    &\geq
    \frac{\Probcond{\eta(X, 0) \geq 1/2}{S = 0}}{\parent{2 + \frac{\theta^*}{\Prob\parent{Y = 1, S = 0}}}} >
    0\enspace,
\end{align*}
where the last inequality is due to Assumption~\ref{ass:continuity}.
We arrive to contradiction, therefore  $(2 - \theta^* / \Prob(Y = 1, S = 1)) > 0$.
Similarly, we show that $(2 + \theta^* / \Prob(Y = 1, S = 0)) > 0$.
Combination of both inequalities and the fact that for all $s \in \{0, 1\}$ we have $\Prob(Y = 1, S = s) \leq 1$ implies that $\absin{\theta^*} \leq 2$.
\end{proof}

\section{Auxiliary results}
\label{sec:auxiliary_results}
Before proceeding to the proof of our main result in Theorem~\ref{thm:as_fairness}, let us first introduce several auxiliary results.
We suggest the reader to first understand these results omitting its proofs before proceeding further.
We will use $C > 0$ as a generic constant which actually could be different from line to line, yet, this constant is always independent from $n, N$.
\begin{remark}
  \label{rm:size_of_data}
  In our work it is assumed that the unlabeled dataset is sampled \iid from $\Prob_{(X, S)}$, it implies that in theory this dataset could be composed of \emph{only} features belonging to either of the group.
  Clearly, since $\Prob(S = 1) > 0$ and $\Prob(S = 0)>0$ then this situation has an extremely small probability of appearing, in terms of $N$.
  There are various ways to alleviate this issue.
  The first one is conditioning on the event that we have at least one sample from each group, however, we have found that this approach unnecessarily over complicates our derivations and does not bring any insights.
  That is why, we follows another path, which is much simpler, though, might look a little strange at first sight.
  We actually augment $\data_N$ by \emph{four} points $(X_1, 1), (X_2, 1), (X_3, 0), (X_4, 0)$ which are sampled as $X_1, X_2 \simiid \Prob_{X | S = 1}$ and $X_3, X_4 \simiid \Prob_{X | S = 0}$.
  Once it is done we can safely assume that $\data_N$ consists of \emph{at least} two features from \emph{each} group.
  The above is simply a technicality which allows to present our result in a correct way.
\end{remark}

The next lemma can be found in~\cite{cribari2000note}.
\begin{lemma}
    \label{lem:binomial_moment_inverse}
    Let $Z$ be a binomial random variable with parameters $N, p$, then for every $\alpha \in \bbR$
    \begin{align*}
      \Exp[(1 + Z)^{\alpha}] = \mathcal{O}\parent{(Np)^{\alpha}}\enspace.
    \end{align*}
\end{lemma}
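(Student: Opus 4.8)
The plan is to split into the cases $\alpha \geq 0$ and $\alpha < 0$, the latter being the one of genuine interest and the main obstacle. Throughout I read the $\mathcal{O}$ in the regime $Np \to \infty$ (in the paper $p = \Prob(S=s)$ is a fixed constant in $(0,1)$, so this is exactly the relevant regime). I would start from the two facts that $\Exp[Z] = Np$ and that $Z$ has explicit factorial moments $\Exp[Z(Z-1)\cdots(Z-j+1)] = N(N-1)\cdots(N-j+1)p^j \leq (Np)^j$ for every integer $j \geq 0$.

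For $\alpha > 0$ (the case $\alpha = 0$ being trivial, as $\Exp[1] = 1$), I would first reduce to integer exponents: fix an integer $m \geq \alpha$ and apply Lyapunov's inequality to the nonnegative variable $1 + Z$ to get $\Exp[(1+Z)^\alpha] \leq \parent{\Exp[(1+Z)^m]}^{\alpha/m}$. It then suffices to show $\Exp[(1+Z)^m] = \mathcal{O}\parent{(Np)^m}$. Expanding $(1+Z)^m = \sum_{j=0}^m \binom{m}{j}Z^j$ and writing each ordinary moment $\Exp[Z^j]$ as a finite nonnegative combination of factorial moments via Stirling numbers $S(j,i) \geq 0$, the factorial-moment bound gives $\Exp[Z^j] \leq \parent{\sum_{i=0}^j S(j,i)}(Np)^j = C_j (Np)^j$, using $(Np)^i \leq (Np)^j$ for $i \leq j$ once $Np \geq 1$. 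Hence $\Exp[(1+Z)^m] \leq C(Np)^m$, and raising to the power $\alpha/m$ closes this case.

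The main work is $\alpha < 0$, where $x \mapsto x^\alpha$ is decreasing and Jensen is useless in the direction we need. Here I would split the expectation over the concentration event $\{Z \geq Np/2\}$ and its complement. On $\{Z \geq Np/2\}$ monotonicity gives $(1+Z)^\alpha \leq (Np/2)^\alpha = 2^{\abs{\alpha}}(Np)^\alpha$, so this part is already $\mathcal{O}\parent{(Np)^\alpha}$. On $\{Z < Np/2\}$ only the trivial bound $(1+Z)^\alpha \leq 1$ is available, so the contribution is at most $\Prob(Z < Np/2)$; a multiplicative Chernoff bound for the lower tail yields $\Prob(Z < Np/2) \leq \exp(-Np/8)$. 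Since exponential decay beats any polynomial, $(Np)^{\abs{\alpha}}\exp(-Np/8) \to 0$ as $Np \to \infty$, whence this second piece is $o\parent{(Np)^\alpha}$. Adding the two pieces gives $\Exp[(1+Z)^\alpha] = \mathcal{O}\parent{(Np)^\alpha}$.

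The only delicate point is the lower-tail estimate controlling $\Prob(Z < Np/2)$, and this is exactly where the growth of $Np$ (equivalently $p$ bounded away from $0$) is used: for fixed $N$ with $p \to 0$ the inequality instead holds trivially, because $(Np)^\alpha \to \infty$ for $\alpha < 0$ while the left-hand side stays bounded by $1$. An alternative for negative integer $\alpha$ is the exact identity $\Exp[(1+Z)^{-1}] = \bigl(1-(1-p)^{N+1}\bigr)/\bigl((N+1)p\bigr)$, obtained from $\frac{1}{k+1}\binom{N}{k} = \frac{1}{N+1}\binom{N+1}{k+1}$ and iterable for higher powers; but the Chernoff splitting is cleaner and handles all real $\alpha < 0$ at once.
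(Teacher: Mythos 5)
Your argument is correct, but it is worth noting that the paper does not prove this lemma at all: it is stated with a pointer to the literature (``The next lemma can be found in~\cite{cribari2000note}''), so your proposal is compared against a citation rather than an in-paper argument. Your self-contained proof is sound and uses exactly the right tools. The easy direction ($\alpha>0$) via Lyapunov's inequality to reduce to an integer exponent $m\geq\alpha$, followed by the expansion of ordinary moments into factorial moments with nonnegative Stirling-number coefficients and the bound $\Exp[Z(Z-1)\cdots(Z-j+1)]\leq (Np)^j$, is clean; the only implicit requirement, $Np\geq 1$ so that $(Np)^i\leq (Np)^j$ for $i\leq j$, is harmless in the stated regime. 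The genuinely needed direction ($\alpha<0$, and indeed the paper applies the lemma with $\alpha=-\sfrac{1}{2}$ to bound $\Exp\sqrt{1/(2+Z)}$) is handled by the standard split on $\{Z\geq Np/2\}$: monotonicity gives $(1+Z)^\alpha\leq 2^{\abs{\alpha}}(Np)^\alpha$ on the event, and the multiplicative Chernoff lower tail $\Prob(Z<Np/2)\leq \exp(-Np/8)$ makes the complement $o\parent{(Np)^\alpha}$ since $(Np)^{\abs{\alpha}}e^{-Np/8}\to 0$. You also correctly pin down the asymptotic regime ($Np\to\infty$, which is what the paper needs since $p=\Prob(S=s)$ is a fixed positive constant and $N\to\infty$), and correctly observe that the bound cannot hold uniformly for $\alpha>0$ as $Np\to 0$. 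The difference from the cited source is one of scope rather than correctness: the note~\cite{cribari2000note} derives exact asymptotic expansions of inverse binomial moments (with leading constants and higher-order terms), whereas your concentration argument yields only the order bound --- but the order bound is all the lemma asserts and all the paper's proofs of Lemma~\ref{lem:emp_proc} and of the risk-consistency part of Theorem~\ref{thm:as_fairness} use, so your more elementary route would serve as a drop-in, self-contained replacement for the citation. Your closing remark, the exact identity $\Exp[(1+Z)^{-1}]=\bigl(1-(1-p)^{N+1}\bigr)/\bigl((N+1)p\bigr)$ via the absorption identity for binomial coefficients, is also correct, though as you say it only covers negative integer exponents and the Chernoff split is the better uniform argument.
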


\begin{lemma}
    \label{lem:risk_expression}
    For any classifier $g$ we have
    \begin{align*}
      \risk(g) = \Exp_{(X, S)}[\eta(X, S)] - \Exp_{(X, S)}[(2\eta(X, S) - 1)g(X, S)]\enspace.
    \end{align*}
\end{lemma}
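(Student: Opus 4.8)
The plan is to reduce the statement to a pointwise identity between binary random variables and then take expectations, introducing $\eta$ via the tower property. This is exactly the linearization of the risk in $g$ that is already performed at the start of the proof of Proposition~\ref{lem:optimal_rule}, so I would reuse the same manipulation.

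First I would record that, since both $g(X,S)$ and $Y$ take values in $\{0,1\}$, the misclassification indicator factorizes pointwise as
\begin{align*}
  \ind{g(X, S) \neq Y} = \parent{1 - g(X, S)}Y + g(X, S)\parent{1 - Y} = Y + g(X, S) - 2 g(X, S) Y\enspace.
\end{align*}
Here the first equality splits the error event into the two cases $\{g(X,S)=0, Y=1\}$ and $\{g(X,S)=1, Y=0\}$, using $\ind{g(X,S)=0} = 1 - g(X,S)$ and $\ind{Y=0} = 1 - Y$. Taking the full expectation over $(X,S,Y)$ then gives $\risk(g) = \Exp[Y] + \Exp[g(X, S)] - 2\Exp[g(X, S) Y]$.

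Next I would introduce the regression function by conditioning on $(X,S)$. Since $g(X,S)$ is a measurable function of $(X,S)$, the tower property yields $\Exp[Y] = \Exp\left[\Expcond{Y}{X, S}\right] = \Exp_{(X,S)}[\eta(X, S)]$ and $\Exp[g(X, S) Y] = \Exp\left[g(X, S)\Expcond{Y}{X, S}\right] = \Exp_{(X,S)}[g(X, S)\eta(X, S)]$. Substituting these into the previous expression and regrouping the terms involving $g$ gives
\begin{align*}
  \risk(g) = \Exp_{(X,S)}[\eta(X, S)] + \Exp_{(X,S)}\left[\parent{1 - 2\eta(X, S)}g(X, S)\right] = \Exp_{(X,S)}[\eta(X, S)] - \Exp_{(X,S)}\left[\parent{2\eta(X, S) - 1}g(X, S)\right]\enspace,
\end{align*}
which is the claimed identity.

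There is no genuine obstacle here: the result is a direct computation. The only step requiring a moment of care is the measurability justification for pulling $\eta(X,S) = \Expcond{Y}{X,S}$ out of the expectation against the $(X,S)$-measurable factor $g(X,S)$; everything else is elementary binary algebra. I emphasize that this lemma is precisely the affine-in-$g$ representation of the risk that underpins both the thresholding form of the optimal rule and the excess-risk decomposition used in the proof of Theorem~\ref{thm:as_fairness}.
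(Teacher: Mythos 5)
Your proof is correct and follows essentially the same route as the paper's: the paper likewise writes $\risk(g) = \Exp[Y(1-g(X,S))] + \Exp[(1-Y)g(X,S)]$, replaces $Y$ by $\eta(X,S)$ via conditioning on $(X,S)$, and regroups the terms. The only cosmetic difference is that you expand the binary algebra before conditioning while the paper conditions first; the substance is identical.
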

\begin{proof}
We can write
  \begin{align*}
    \risk(g) &\eqdef \Prob(Y \neq g(X, S)) = \Exp [Y(1 - g(X, S))] + \Exp [(1 - Y)g(X, S)]\\\nonumber
    &=
    \Exp_{(X, S)}\eta(X, S)(1 - g(X, S)) + \Exp_{(X, S)}(1 - \eta(X, S))g(X, S)\\
    &= \Exp_{(X, S)}[\eta(X, S)] - \Exp_{(X, S)}[(2\eta(X, S) - 1)g(X, S)]\enspace.\nonumber
\end{align*}
\end{proof}

In what follows we shall often use the relations:
\begin{align*}
\Prob(Y = 1, S = s) & = \Probcond{Y = 1}{S = s}\Prob(S = s)\enspace, \\
\Probcond{Y = 1}{S = s} & =  \Exp_{X| S = s}[\eta(X, s)] \enspace.
\end{align*}
which holds for all $s \in \{0, 1\}$.

\section{Proof of Theorem~\ref{thm:as_fairness}}
\label{sec:proof_of_main}
Below we gather extra tools which are directly related to the proof of our main result, proof are provided in Appendix~\ref{sec:auxiliary_results_proofs}.
First lemma gives an upper on the quantity of unfairness $\Delta(g, \Prob)$ in terms of its empirical version in Definition~\ref{def:empirical_unfairness}.
\begin{lemma}
    \label{lem:is_g_fair}
    Let $g$ be \emph{any} classifier (data depended or not) and $\heta$ be an estimator of the regression function $\eta$ constructed on $\data_n$.
    Then, almost surely we have
    \begin{align*}
        \Delta(g, \Prob)
        &\leq
        \hat{\Delta}(g, \Prob) + \underbrace{2\frac{\Exp_{X | S = 1}\abs{\eta(X, 1) - \heta(X, 1)}}{\Probcond{Y = 1}{S = 1}}}_{\text{how good is } \heta}
        +
        \underbrace{2\frac{\Exp_{X | S = 0}\abs{\eta(X, 0) - \heta(X, 0)}}{\Probcond{Y = 1}{S = 0}}}_{\text{how good is } \heta}\\
        & +
        \underbrace{\frac{\abs{(\Exp_{X | S = 1} - \hExp_{X | S = 1})\heta(X, 1)g(X, 1)}}{\Exp_{X | S = 1}\heta(X, 1)}}_{\text{empirical process}}
        +
        \underbrace{\frac{\abs{(\Exp_{X | S = 0} - \hExp_{X | S = 0})\heta(X, 0)g(X, 0)}}{\Exp_{X | S = 0}\heta(X, 0)}}_{\text{empirical process}}\\
        & +
        \frac{\abs{(\hExp_{X | S = 1} - \Exp_{X | S = 1})\heta(X, 1)}}{\Exp_{X | S = 1}\heta(X, 1)}
        +
        \frac{\abs{(\hExp_{X | S = 0} - \Exp_{X | S = 0})\heta(X, 0)}}{\Exp_{X | S = 0}\heta(X, 0)}\enspace.
    \end{align*}
\end{lemma}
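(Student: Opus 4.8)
The plan is to exploit the reformulation of $\Delta(g,\Prob)$ derived just after Definition~\ref{def:deo}, which writes both the population and the empirical unfairness as the absolute difference of two group-wise ratios. For $s \in \{0,1\}$ I would set
\begin{align*}
A_s = \frac{\Exp_{X | S = s}[\eta(X, s) g(X, s)]}{\Exp_{X | S = s}[\eta(X, s)]}, \qquad
\hat A_s = \frac{\hExp_{X | S = s}\heta(X, s)g(X, s)}{\hExp_{X | S = s}\heta(X, s)},
\end{align*}
so that, recalling $\Probcond{Y = 1}{S = s} = \Exp_{X | S = s}[\eta(X, s)]$, we have $\Delta(g,\Prob) = \absin{A_1 - A_0}$ and $\hat\Delta(g,\Prob) = \absin{\hat A_1 - \hat A_0}$. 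A single triangle inequality then gives $\Delta(g,\Prob) \leq \hat\Delta(g,\Prob) + \absin{A_1 - \hat A_1} + \absin{A_0 - \hat A_0}$, so the whole statement reduces to bounding $\absin{A_s - \hat A_s}$ for each $s$ by the three summands attached to that group on the right-hand side.

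To control $\absin{A_s - \hat A_s}$ I would insert the hybrid quantity $B_s = \Exp_{X | S = s}[\heta(X, s)g(X, s)] / \Exp_{X | S = s}[\heta(X, s)]$, which keeps the population measure but replaces $\eta$ by its estimate $\heta$, and split $\absin{A_s - \hat A_s} \leq \absin{A_s - B_s} + \absin{B_s - \hat A_s}$. The first difference isolates the estimation error of $\eta$ (giving the ``how good is $\heta$'' term), while the second isolates the error of replacing the population law of $(X,S)$ by the empirical law induced by $\data_N$ (giving the ``empirical process'' terms).

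Each difference is a difference of ratios, handled by the standard add-and-subtract device applied separately in numerator and denominator. Writing all expectations conditional on $S = s$, I would use the identity $A_s - B_s = (\Exp[\eta g] - \Exp[\heta g])/\Exp[\eta] + B_s\,(\Exp[\heta] - \Exp[\eta])/\Exp[\eta]$; since $g(X,s)\in\{0,1\}$ forces $B_s \in [0,1]$, and since both $\absin{\Exp[\eta g] - \Exp[\heta g]}$ and $\absin{\Exp[\heta] - \Exp[\eta]}$ are at most $\Exp_{X | S = s}\absin{\eta(X,s) - \heta(X,s)}$ (using $0 \le g \le 1$), this yields the bound $2\,\Exp_{X | S = s}\absin{\eta(X,s) - \heta(X,s)}/\Probcond{Y = 1}{S = s}$. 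The same manipulation applied to $B_s - \hat A_s$, now using $\hat A_s \in [0,1]$ and the common denominator $\Exp_{X | S = s}[\heta(X,s)]$, produces exactly the empirical-process contribution $\absin{(\Exp_{X | S = s} - \hExp_{X | S = s})\heta(X,1)g(X,1)}/\Exp_{X | S = s}\heta$ together with the denominator term $\absin{(\hExp_{X | S = s} - \Exp_{X | S = s})\heta}/\Exp_{X | S = s}\heta$. Summing over $s \in \{0,1\}$ and combining with the opening triangle inequality gives the claim.

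The computations are elementary; the only points demanding care are (i) verifying that the cross-ratios $B_s$ and $\hat A_s$ lie in $[0,1]$, which is precisely where the binary structure $g(X,s) \in \{0,1\}$ and the nonnegativity of $\heta$ enter and what lets the $1/\Exp[\eta]$ (resp. $1/\Exp[\heta]$) factor multiplying the denominator error be retained without producing an extra ratio, and (ii) ensuring every denominator is strictly positive so the ratios are defined. The latter holds almost surely, using $\Probcond{Y = 1}{S = s} > 0$, the positivity of $\Exp_{X | S = s}[\heta(X,s)]$ from Assumption~\ref{ass:estimator_heta_is_consistent}(ii), and the data-augmentation convention of Remark~\ref{rm:size_of_data} keeping both groups represented in $\data_N$. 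I expect step (i) to be the main conceptual point, as it is what makes the ratio algebra collapse to the stated three terms per group.
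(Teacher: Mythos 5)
Your proposal is correct and follows essentially the same route as the paper: the paper's proof inserts the same hybrid ratios $B_s = \Exp_{X|S=s}[\heta(X,s)g(X,s)]/\Exp_{X|S=s}[\heta(X,s)]$ and empirical ratios $\hat A_s$, performs the same add-and-subtract ratio algebra, and uses the same observation that the cross-ratios lie in $[0,1]$ (equivalently $\Exp_{X|S=s}[\heta g]\leq \Exp_{X|S=s}[\heta]$ and $\hExp_{X|S=s}[\heta g]\leq \hExp_{X|S=s}[\heta]$) to obtain the identical six terms with identical constants. The only difference is the trivial reordering of triangle-inequality insertions --- you peel off $\hat\Delta(g,\Prob)$ first and then split $\absin{A_s-\hat A_s}$ through $B_s$, whereas the paper first replaces $\eta$ by $\heta$ and then passes to the empirical measure --- which changes nothing in substance.
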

Let us elaborate on the above result.
The second and the third terms are responsible for the estimation of $\eta$ and can be controlled in various parametric on nonparametric models.
The third and the fourth terms can be handled with the theory of empirical processes in the considered classifier $g$ is data dependent.
The last two terms can be handled conditionally on the first labeled samples by the use of the multiplicative Chernoff's bound or (if we do not mind losing a constant factor of 2) can be handled by the empirical process used to bound the third and the fourth terms.

The next lemma gives an upper bound on the empirical processes of Lemma~\ref{lem:is_g_fair}.
\begin{lemma}
    \label{lem:emp_proc}
    There exists a constant $C > 0$ that depends only on $\Prob(S = 0)$ and $\Prob(S = 1)$ such that almost surely for all $s \in \{0, 1\}$ we have
    \begin{align*}
        \Exp_{\data_N}\sup_{t \in [0, 1]}\abs{(\Exp_{X | S = s} - \hExp_{X | S = s})\heta(X, s)\ind{t \leq \heta(X, s)}}
        &\leq
        C \sqrt{\frac{1}{N}}\enspace.
    \end{align*}
\end{lemma}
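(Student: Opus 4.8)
The plan is to work conditionally on the labeled sample $\data_n$, so that $\heta$ becomes a fixed measurable function and the remaining randomness is only that of $\data_N$; the ``almost surely'' in the statement refers precisely to this conditioning. Fix $s \in \{0, 1\}$ and write $N_s \eqdef \abs{\enscond{(X, S) \in \data_N}{S = s}}$ for the number of unlabeled points in group $s$. By the \iid sampling of $\data_N$, the variable $N_s$ is binomial with parameters $N$ and $\Prob(S = s)$, and thanks to the augmentation in Remark~\ref{rm:size_of_data} we may assume $N_s \geq 2$. Conditionally on $N_s = m$, the corresponding features are \iid from $\Prob_{X | S = s}$ and $\hExp_{X | S = s}$ is their empirical average, so the quantity to control becomes the expected uniform deviation of a genuine empirical process built from $m$ \iid observations.

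Next I would identify the indexing class $\class{F}_s \eqdef \enscond{x \mapsto \heta(x, s)\ind{t \leq \heta(x, s)}}{t \in [0, 1]}$. Assuming, as we may after truncation, that $\heta$ takes values in $[0, 1]$, every member of $\class{F}_s$ is dominated by the envelope $1$. The crucial structural observation is that the super-level sets $\enscond{x}{\heta(x, s) \geq t}$, $t \in [0, 1]$, are totally ordered by inclusion, hence the class $\enscond{\ind{t \leq \heta(\cdot, s)}}{t \in [0, 1]}$ has VC dimension at most one \emph{irrespective of the realization of $\heta$}; multiplying by the fixed function $\heta(\cdot, s)$ preserves the VC-subgraph property with controlled uniform entropy. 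Consequently, symmetrization together with the standard maximal inequality for a uniformly bounded VC class yields a universal constant $c > 0$ such that
\[
\Exp\Bigl[\,\sup_{t \in [0, 1]}\abs{(\Exp_{X | S = s} - \hExp_{X | S = s})\heta(X, s)\ind{t \leq \heta(X, s)}} \;\Big|\; N_s = m\Bigr] \leq \frac{c}{\sqrt{m}}\enspace.
\]

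Finally I would integrate this conditional bound over the law of $N_s$. Using $m^{-1/2} \leq \sqrt{2}\,(1 + m)^{-1/2}$ for $m \geq 1$ and then Lemma~\ref{lem:binomial_moment_inverse} with $\alpha = -1/2$, one obtains $\Exp[(1 + N_s)^{-1/2}] = \mathcal{O}\bigl((N\Prob(S = s))^{-1/2}\bigr)$, so the full expectation is at most $C/\sqrt{N}$ with a constant $C$ depending only on $\Prob(S = 0)$ and $\Prob(S = 1)$ through the factors $\Prob(S = s)^{-1/2}$, which is exactly the claimed bound. The step I expect to be most delicate is the complexity control: one must argue that the combinatorial complexity of $\class{F}_s$ is bounded \emph{uniformly over all possible data-dependent estimators $\heta$}, which is what makes the constant universal and the conditioning on $\data_n$ harmless. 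The nestedness of the super-level sets is the key that secures this uniformity, while the binomial randomness of $N_s$, handled through Lemma~\ref{lem:binomial_moment_inverse}, is the only other moving part.
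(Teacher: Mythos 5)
Your proposal is correct and follows essentially the same route as the paper's proof: condition on $\data_n$ so that $\heta$ is fixed, slice $\data_N$ by group so the quantity becomes a genuine empirical process over $N_s$ \iid draws from $\Prob_{X|S=s}$, symmetrize, exploit the nestedness of the level sets $\enscond{x}{\heta(x,s)\geq t}$ to obtain a VC-dimension-one bound of order $1/\sqrt{N_s}$, and integrate out the binomial count via Lemma~\ref{lem:binomial_moment_inverse}. The only (minor) divergence is in how the multiplier $\heta(\cdot,s)$ is handled: the paper strips it off with the contraction principle, applying $\varphi_i(u)=\heta(X_i,s)\,u$ (a contraction with $\varphi_i(0)=0$ since $\heta\in[0,1]$) to reduce to the plain Rademacher complexity of the indicator class, whereas you keep the multiplier inside the class and invoke the permanence of the VC-subgraph property under multiplication by a fixed bounded function --- both devices are standard, both give a constant independent of the realization of $\heta$, and both lead to the same $C/\sqrt{N}$ conclusion.
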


The next result is obvious, yet, is used several times in our proof, that is why we present it separately.
\begin{lemma}
  \label{lem:fpr_nice_formula}
  For any functions $h_1, h_0: \bbR^d \to [0, 1]$, any $\theta \in \bbR$, any $a_1, a_0, b_1, b_0 \in (0, 1)$ we have
  \begin{align*}
    \Exp_{X | S = 1}&\left[\frac{\theta h_1(X)}{a_1}\ind{b_1(2 h_1(X) - 1) - \frac{\theta h_1(X)}{a_1} \geq 0}\right]\\
    &=
    \Exp_{X | S = 1}\left[(2 h_1(X) - 1)\ind{b_1(2 h_1(X) - 1) - \frac{\theta h_1(X)}{a_1} \geq 0}\right]b_1\\
    &-
    {\Exp_{X | S = 1}\parent{b_1(2 h_1(X) - 1) - \frac{\theta h_1(X)}{a_1}}_+}\enspace,\\
    \Exp_{X | S = 0}&\left[\frac{\theta h_0(X)}{a_0}\ind{b_0(2 h_0(X) - 1) + \frac{\theta h_0(X)}{a_0} \geq 0}\right]\\
    &=
    -\Exp_{X | S = 0}\left[(2 h_0(X) - 1)\ind{b_0(2 h_0(X) - 1) + \frac{\theta h_0(X)}{a_0} \geq 0}\right]b_0\\
    &+
    {\Exp_{X | S = 0}\parent{b_0(2 h_0(X) - 1) + \frac{\theta h_0(X)}{a_0}}_+}\enspace,
  \end{align*}
  moreover, the expectation $\Exp_{X | S = s}$ can be replaced by $\hExp_{X | S = s}$ for all $s \in \{0, 1\}$.
\end{lemma}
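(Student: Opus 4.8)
The plan is to observe that both displayed equalities are nothing more than the elementary pointwise identity $\parent{u}_+ = u\,\ind{u \geq 0}$, integrated against the relevant measure. Consequently I would invoke no probabilistic machinery whatsoever: everything reduces to algebra inside the expectation, and the concluding remark that $\Exp_{X | S = s}$ may be replaced by $\hExp_{X | S = s}$ will then follow for free, since the manipulation is valid for each fixed $X$ and hence survives integration against \emph{any} probability measure.

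Concretely, for the $S = 1$ formula I would set, for each $x \in \bbR^d$,
\begin{align*}
    u(x) \eqdef b_1\parent{2 h_1(x) - 1} - \frac{\theta h_1(x)}{a_1}\enspace,
\end{align*}
and write $\parent{u(x)}_+ = u(x)\,\ind{u(x) \geq 0}$. Expanding the right-hand side linearly gives
\begin{align*}
    \parent{u(x)}_+ = b_1\parent{2 h_1(x) - 1}\ind{u(x) \geq 0} - \frac{\theta h_1(x)}{a_1}\ind{u(x) \geq 0}\enspace.
\end{align*}
Rearranging to isolate the $\theta$-term and taking $\Exp_{X | S = 1}$ of both sides yields exactly the first claimed equality. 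Beyond $a_1 \neq 0$, which is needed merely for the ratio to be defined, the hypotheses $a_1, b_1 \in (0,1)$ and the value of $\theta$ play no role in the identity itself; they matter only where the lemma is later applied.

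For the $S = 0$ formula the argument is identical after setting $v(x) \eqdef b_0\parent{2 h_0(x) - 1} + \tfrac{\theta h_0(x)}{a_0}$; the only change is that the $\theta$-term now carries a $+$ sign, so that after applying $\parent{v}_+ = v\,\ind{v \geq 0}$ and solving for $\Exp_{X | S = 0}[\tfrac{\theta h_0(X)}{a_0}\ind{v \geq 0}]$ the $b_0$-term appears with a minus and the positive-part term with a plus, matching the stated signs. Finally, since every manipulation above holds pointwise in $x$, replacing the conditional expectation $\Exp_{X | S = s}$ by its empirical counterpart $\hExp_{X | S = s}$ leaves each line valid, which establishes the closing assertion of the lemma. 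There is essentially no obstacle here; the only point demanding care is the sign bookkeeping in the $S = 0$ case.
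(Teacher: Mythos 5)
Your proof is correct and matches the intended argument: the paper states this lemma without proof (remarking only that it is ``obvious''), and the justification is exactly your pointwise identity $\parent{u}_+ = u\,\ind{u \geq 0}$ applied to $u(x) = b_1(2h_1(x)-1) - \theta h_1(x)/a_1$ and $v(x) = b_0(2h_0(x)-1) + \theta h_0(x)/a_0$, followed by linear rearrangement and integration, with the correct sign flip in the $S=0$ case. Your closing observation that the identity holds pointwise in $x$ and therefore survives integration against any probability measure, in particular the empirical one $\hExp_{X|S=s}$, correctly settles the final assertion of the lemma as well.
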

\subsection{Proof of asymptotic fairness (Part I of Theorem~\ref{thm:as_fairness})}
\label{sec:ideas}

\begin{proof}
    The first step is to show that under Assumption~\ref{ass:continuity_estimator} the term $\hat{\Delta}(\hat g, \Prob)$ cannot be too big.
    Indeed, notice that for every $\theta \in [-2, 2]$, thanks to the triangle inequality we can write almost surely
    \begin{align}
    \label{eq:EmpiricalUnfairnessfirstbound}
        \hat{\Delta}(\hat g_{\theta}, \Prob)
        &\leq
        \abs{\frac{\Exp_{X | S = 1}\heta(X, 1)\hat g_{\theta}(X, 1)}{\Exp_{X | S = 1}\heta(X, 1)} - \frac{\Exp_{X | S = 0}\heta(X, 0)\hat g_{\theta}(X, 0)}{\Exp_{X | S = 0}\heta(X, 0)}}\nonumber\\
        &+
        \abs{\frac{\Exp_{X | S = 1}\heta(X, 1)\hat g_{\theta}(X, 1)}{\Exp_{X | S = 1}\heta(X, 1)} - \frac{\hExp_{X | S = 1}\heta(X, 1)\hat g_{\theta}(X, 1)}{\hExp_{X | S = 1}\heta(X, 1)}}\\
        &+
        \abs{\frac{\Exp_{X | S = 0}\heta(X, 0)\hat g_{\theta}(X, 0)}{\Exp_{X | S = 0}\heta(X, 0)} - \frac{\hExp_{X | S = 0}\heta(X, 0)\hat g_{\theta}(X, 0)}{\hExp_{X | S = 0}\heta(X, 0)}}\nonumber\enspace.
    \end{align}
    Our goal is to take care of each of the three terms appearing on the right hand side of the inequality.
    The technique used for the second and the third term is identical, whereas the first term is a bit more involved.
    Let us start with the second term on the right hand side of Eq.~\eqref{eq:EmpiricalUnfairnessfirstbound}.
    For this term we can write almost surely
    \begin{align*}
      & \abs{\frac{\Exp_{X | S = 1}\heta(X, 1)\hat g_{\theta}(X, 1)}{\Exp_{X | S = 1}\heta(X, 1)} - \frac{\hExp_{X | S = 1}\heta(X, 1)\hat g_{\theta}(X, 1)}{\hExp_{X | S = 1}\heta(X, 1)}} \\
      &\leq
      \abs{\frac{\Exp_{X | S = 1}\heta(X, 1)\hat g_{\theta}(X, 1)}{\Exp_{X | S = 1}\heta(X, 1)} - \frac{\hExp_{X | S = 1}\heta(X, 1)\hat g_{\theta}(X, 1)}{\Exp_{X | S = 1}\heta(X, 1)}}\\
      &+
      \abs{\frac{\hExp_{X | S = 1}\heta(X, 1)\hat g_{\theta}(X, 1)}{\hExp_{X | S = 1}\heta(X, 1)} - \frac{\hExp_{X | S = 1}\heta(X, 1)\hat g_{\theta}(X, 1)}{\Exp_{X | S = 1}\heta(X, 1)}}\\
      &=
      \frac{\abs{(\Exp_{X | S = 1} - \hExp_{X | S = 1})\heta(X, 1)\hat g_{\theta}(X, 1)}}{\Exp_{X | S = 1}\heta(X, 1)}\\
      &+
      \underbrace{\frac{\hExp_{X | S = 1}\heta(X, 1)\hat g_{\theta}(X, 1)}{\hExp_{X | S = 1}\heta(X, 1)}}_{\leq 1}\frac{\abs{(\Exp_{X | S = 1} - \hExp_{X | S = 1})\heta(X, 1)\ind{0 \leq \heta(X, 1)}}}{\Exp_{X | S = 1}\heta(X, 1)}\\
      &\leq
      2\frac{\sup_{t \in [0, 1]}\abs{(\Exp_{X | S = 1} - \hExp_{X | S = 1})\heta(X, 1)\ind{t \leq \heta(X, 1)}}}{\Exp_{X | S = 1}\heta(X, 1)}\enspace,
    \end{align*}
    where the last inequality follows from the fact that $\hat g_{\theta}$ is a thresholding rule.
    Similarly, we show that the third term in Eq.~\eqref{eq:EmpiricalUnfairnessfirstbound} admits the following bound almost surely
    \begin{align*}
      & \abs{\frac{\Exp_{X | S = 0}\heta(X, 0)\hat g_{\theta}(X, 0)}{\Exp_{X | S = 0}\heta(X, 0)} - \frac{\hExp_{X | S = 0}\heta(X, 0)\hat g_{\theta}(X, 0)}{\hExp_{X | S = 0}\heta(X, 0)}} \\
      & \leq 2\frac{\sup_{t \in [0, 1]}\abs{(\Exp_{X | S = 0} - \hExp_{X | S = 0})\heta(X, 0)\ind{t \leq \heta(X, 0)}}}{\Exp_{X | S = 0}\heta(X, 0)}\enspace.
    \end{align*}
    Therefore, we arrive at the following bound on $\hat{\Delta}(\hat g_{\theta}, \Prob)$ which holds almost surely
    \begin{align}
        \label{eq:empirical_gap_in_empirical_fairness}
        \hat{\Delta}(\hat g_{\theta}, \Prob)
        &\leq
        \abs{\frac{\Exp_{X | S = 1}\heta(X, 1)\hat g_{\theta}(X, 1)}{\Exp_{X | S = 1}\heta(X, 1)} - \frac{\Exp_{X | S = 0}\heta(X, 0)\hat g_{\theta}(X, 0)}{\Exp_{X | S = 0}\heta(X, 0)}}\\
        &+
        2\frac{\sup_{t \in [0, 1]}\abs{(\Exp_{X | S = 1} - \hExp_{X | S = 1})\heta(X, 1)\ind{t \leq \heta(X, 1)}}}{\Exp_{X | S = 1}\heta(X, 1)}\nonumber\\
        &+
        2\frac{\sup_{t \in [0, 1]}\abs{(\Exp_{X | S = 0} - \hExp_{X | S = 0})\heta(X, 0)\ind{t \leq \heta(X, 0)}}}{\Exp_{X | S = 0}\heta(X, 0)}\enspace.\nonumber
    \end{align}
    This is one of the moments when we make use of Assumption~\ref{ass:continuity_estimator}.
    Thanks to the continuity we can be sure that for every possible unlabeled sample $\data_N$, there exists $\theta'(\data_N)$ such that
    \begin{align*}
        \frac{\Exp_{X | S = 1}\heta(X, 1)\hat g_{\theta'(\data_N)}(X, 1)}{\Exp_{X | S = 1}\heta(X, 1)} = \frac{\Exp_{X | S = 0}\heta(X, 0)\hat g_{\theta'(\data_N)}(X, 0)}{\Exp_{X | S = 0}\heta(X, 0)}\enspace.
    \end{align*}
    Indeed, for every possible unlabeled sample $\data_N$ on the left hand side we have a continuous decreasing of $\theta$ function and on the right hand side we have a continuous increasing function of $\theta$.
    Therefore, such a value $\theta'(\data_N)$ exists.

    Taking infimum over $\theta \in [-2, 2]$ on both sides of Equation~\eqref{eq:empirical_gap_in_empirical_fairness} we obtain
    \begin{align}
      \label{eq:empirical_unfairness_bound}
      \hat{\Delta}(\hat g, \Prob) = \hat{\Delta}(\hat g_{\hat \theta}, \Prob)
      &\leq
      2\frac{\sup_{t \in [0, 1]}\abs{(\Exp_{X | S = 1} - \hExp_{X | S = 1})\heta(X, 1)\ind{t \leq \heta(X, 1)}}}{\Exp_{X | S = 1}\heta(X, 1)}\\
      &+
      2\frac{\sup_{t \in [0, 1]}\abs{(\Exp_{X | S = 0} - \hExp_{X | S = 0})\heta(X, 0)\ind{t \leq \heta(X, 0)}}}{\Exp_{X | S = 0}\heta(X, 0)}\enspace.\nonumber
    \end{align}
    Using Lemma~\ref{lem:is_g_fair} and applying it to $\hat g$ we immediately obtain almost surely
    \begin{align*}
      {\Delta}(\hat g, \Prob) \leq & 4\frac{\sup_{t \in [0, 1]}\abs{(\Exp_{X | S = 1} - \hExp_{X | S = 1})\heta(X, 1)\ind{t \leq \heta(X, 1)}}}{\Exp_{X | S = 1}\heta(X, 1)} \\
      & +
      4\frac{\sup_{t \in [0, 1]}\abs{(\Exp_{X | S = 0} - \hExp_{X | S = 0})\heta(X, 0)\ind{t \leq \heta(X, 0)}}}{\Exp_{X | S = 0}\heta(X, 0)}\\
      &+
      {2\frac{\Exp_{X | S = 1}\abs{\eta(X, 1) - \heta(X, 1)}}{\Probcond{Y = 1}{S = 1}}}
        +
      {2\frac{\Exp_{X | S = 0}\abs{\eta(X, 0) - \heta(X, 0)}}{\Probcond{Y = 1}{S = 0}}}\enspace.
    \end{align*}
    Clearly, if $\heta$ is a consistent estimator of $\eta$ then the last two terms on the right hand side are converging to zero in expectation as $n \rightarrow \infty$.
    Therefore, it remain to provide an upper bound for the two empirical processes.
    Recall, that our goal is to obtain consistency in expectation, thus we take expectation \wrt $\data_n, \data_N$ from both sides of the inequality.
    Thanks to Lemma~\ref{lem:emp_proc} we have for each $s \in \{0, 1\}$
    \begin{align*}
        \Exp_{\data_N}\sup_{t \in [0, 1]}\abs{(\Exp_{X | S = s} - \hExp_{X | S = s})\heta(X, s)\ind{t \leq \heta(X, s)}} \leq C\sqrt{\frac{1}{N}}\enspace .
    \end{align*}
    The arguments above imply that there exists an absolute constant $C > 0$ such that
    \begin{align*}
      \Exp_{(\data_n, \data_N)}[{\Delta}( \hat g, \Prob)]
      &\leq
      {2\frac{\Exp_{\data_n}\Exp_{X | S = 1}\abs{\eta(X, 1) - \heta(X, 1)}}{\Probcond{Y = 1}{S = 1}}}
        +
      {2\frac{\Exp_{\data_n}\Exp_{X | S = 0}\abs{\eta(X, 0) - \heta(X, 0)}}{\Probcond{Y = 1}{S = 0}}}\\
      &+C\sqrt{\frac{1}{N}}\Exp_{\data_n}\frac{1}{\min\{\Exp_{X | S = 1}\heta(X, 1), \Exp_{X | S = 0}\heta(X, 0)\}}\enspace.
    \end{align*}
    Using the second item of Assumption~\ref{ass:estimator_heta_is_consistent}, which states that $\min\{\Exp_{X | S = 1}\heta(X, 1), \Exp_{X | S = 0}\heta(X, 0)\} \geq c_{n, N}$ almost surely we conclude.
\end{proof}

\subsection{Proof of asymptotic optimality (Part II of Theorem~\ref{thm:as_fairness})}
\label{sec:proof_of_asymptotic_optimality}
In order to show that the risk of the proposed algorithm converges to the risk of the optimal classifier,
we follow the strategy of~\cite{Chzhen_Denis_Hebiri19}, that is, we first introduce an intermediate pseudo-estimator $\tilde g$ as follows
\begin{align}
  \label{eq:pseudo_oracle}
  \tilde{g}(x, 1)
    &
    =
    \ind{\Prob(S = 1) \leq \heta(x, 1)\parent{2\Prob(S = 1) - \frac{\ttheta}{\Exp_{X | S = 1}[\heta(X, 1)]}}}\enspace,
    \\
    \tilde{g}(x, 0)
    &
    =
    \ind{\Prob(S = 0) \leq \heta(x, 0)\parent{2\Prob(S = 0) + \frac{\ttheta}{ \Exp_{X | S = 0}[\heta(X, 0)]}}}\enspace,
\end{align}
where $\ttheta$ is a solution of
  \begin{align}
    \label{eq:theta_tilde}
    \frac{\Exp_{X | S = 1}\left[\heta(X, 1)\tilde g_{\theta}(X, 1)\right]}{\Exp_{X | S = 1}[\heta(X, 1)]}
    =
    &\frac{\Exp_{X | S = 0}\left[\heta(X, 0) \tilde g_\theta(X, 0)\right]}{\Exp_{X | S = 0}[\heta(X, 0)]}\enspace,
\end{align}
with $\tilde g_\theta$ being defined as for all $x \in \bbR^d$ as
\begin{align*}
    \tilde g_\theta(x, 1)
    &=
    \ind{1 \leq \heta(X, 1)\parent{2 - \frac{\theta}{\Exp_{X | S = 1}[\heta(X , 1)]\Prob(S = 1)}}}\enspace,\\
    \tilde g_\theta(x, 0)
    &=
    \ind{1\leq \heta(X, 0)\parent{2 + \frac{\theta}{\Exp_{X | S = 0}[\heta(X, 0)]\Prob(S = 0) }}}\enspace.\\
\end{align*}
Note that thanks to Assumption~\ref{ass:continuity_estimator} such a value $\ttheta$ always exists.

Intuitively, the classifier $\tilde g$ knows the marginal distribution of $(X, S)$, that is, it knows both $\Prob_{X | s}$ and $\Prob_S$.
It is seen as an idealized version of $\hat g$, where the uncertainty is only induced by the lack of knowledge of the regression function $\eta$.
We upper bound the excess risk in two steps.
In the first step we upper bound ${\risk(\tilde g) - \risk(g^*)}$ and on the second we upper bound the difference ${\risk{(\hat g)} - \risk(\tilde g)}$.
\begin{theorem}[Bound on the pseudo oracle]
    \label{thm:pseudo_oracle_rate}
    Let $\tilde g$ be the pseudo oracle classifier defined in Eq.~\ref{eq:pseudo_oracle} with $\heta$ satisfying Assumptions~\ref{ass:estimator_heta_is_consistent} and~\ref{ass:continuity_estimator}, then
    \begin{align*}
    \lim_{n \rightarrow \infty}\Exp_{\data_n}{[\risk(\tilde g)] - \risk(g^*)} \leq 0\enspace.
    \end{align*}
\end{theorem}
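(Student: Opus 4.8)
The plan is to bound the excess risk by a quantity that vanishes with the $\ell_1$-error between $\heta$ and $\eta$, exploiting that $\tilde g$ is the \emph{optimal} fair classifier in the ``$\heta$-world''. First I would rewrite the excess risk through Lemma~\ref{lem:risk_expression}, and introduce the surrogate risk $\tilde{\risk}(g) \eqdef \Exp_{(X,S)}[\heta(X,S)] - \Exp_{(X,S)}[(2\heta(X,S) - 1)g(X,S)]$, i.e. the risk one would obtain if $\heta$ were the true regression function. I then decompose
\begin{align*}
\risk(\tilde g) - \risk(g^*) = \underbrace{\parent{\risk(\tilde g) - \tilde{\risk}(\tilde g)}}_{(A)} + \underbrace{\parent{\tilde{\risk}(\tilde g) - \tilde{\risk}(g^*)}}_{(B)} + \underbrace{\parent{\tilde{\risk}(g^*) - \risk(g^*)}}_{(C)}\enspace.
\end{align*}

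For the ``outer'' terms $(A)$ and $(C)$ I would note that for \emph{any} classifier $g$ one has $\risk(g) - \tilde{\risk}(g) = \Exp_{(X,S)}[(\eta(X,S) - \heta(X,S))(1 - 2g(X,S))]$, whence $\abs{\risk(g) - \tilde{\risk}(g)} \leq \Exp_{(X,S)}\abs{\eta(X,S) - \heta(X,S)}$ since $g$ is $\{0,1\}$-valued. Applying this to $g = \tilde g$ and to $g = g^*$ shows that $\Exp_{\data_n}[(A) + (C)] \to 0$ by Assumption~\ref{ass:estimator_heta_is_consistent}(i).

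The term $(B)$ is the crux and is handled by weak duality. Repeating the argument behind Proposition~\ref{lem:optimal_rule} with $\heta$ in place of $\eta$ (and the true marginals), I would observe that $\tilde g = \tilde g_{\ttheta}$ is the pointwise minimizer over $\class{G}$ of the surrogate Lagrangian $\tilde L(g, \ttheta) = \tilde{\risk}(g) + \ttheta\,\Delta_{\pm}(g)$, where $\Delta_{\pm}(g) = \frac{\Exp_{X | S = 1}[\heta(X, 1)g(X, 1)]}{\Exp_{X | S = 1}[\heta(X, 1)]} - \frac{\Exp_{X | S = 0}[\heta(X, 0)g(X, 0)]}{\Exp_{X | S = 0}[\heta(X, 0)]}$ is the signed surrogate fairness gap, and that $\Delta_{\pm}(\tilde g) = 0$ by the defining equation~\eqref{eq:theta_tilde}. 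Minimality of $\tilde g$ evaluated against $g^*$ then gives $(B) = \tilde{\risk}(\tilde g) - \tilde{\risk}(g^*) \leq \ttheta\,\Delta_{\pm}(g^*) \leq 2\abs{\Delta_{\pm}(g^*)}$, using $\absin{\ttheta} \leq 2$. It remains to bound the surrogate fairness gap of $g^*$: since $g^*$ is \emph{genuinely} fair, its true gap vanishes, and an add-and-subtract computation (turning $\frac{A_s}{B_s} - \frac{a_s}{b_s}$ into $\frac{A_s b_s - a_s B_s}{B_s b_s}$) yields $\abs{\Delta_{\pm}(g^*)} \leq \sum_{s \in \{0,1\}} 2\,\Exp_{X | S = s}\abs{\eta(X,s) - \heta(X,s)} \big/ \Exp_{X | S = s}[\heta(X,s)]$.

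I expect the main obstacle to be controlling $\Exp_{\data_n}$ of this last ratio, where the vanishing $\ell_1$-error sits over $\Exp_{X | S = s}[\heta(X,s)]$, which is only guaranteed to be bounded below by the \emph{vanishing} sequence $c_{n,N}$. I would handle it by splitting on the event $\{\Exp_{X | S = s}\abs{\eta - \heta} \leq \tfrac{1}{2}\Probcond{Y = 1}{S = s}\}$: on this event $\Exp_{X | S = s}[\heta] \geq \tfrac{1}{2}\Probcond{Y = 1}{S = s}$ is bounded away from $0$, so the ratio is at most $4\,\Exp_{X | S = s}\abs{\eta - \heta}/\Probcond{Y = 1}{S = s} \to 0$; on the complementary event the ratio is at most $1/c_{n,N}$, and a Markov bound shows this event has probability of order $\Exp_{\data_n}\Exp_{X | S = s}\abs{\eta - \heta}$, so that the lower bound of Assumption~\ref{ass:estimator_heta_is_consistent}(ii) is exactly what keeps this contribution negligible. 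The delicate point here is reconciling the (assumption-free) $\ell_1$-rate with the truncation level $c_{n,N}$, and this is where both parts of Assumption~\ref{ass:estimator_heta_is_consistent} are jointly required. Combining the three bounds, every term tends to $0$ as $n \to \infty$, which yields $\lim_{n \to \infty}\Exp_{\data_n}[\risk(\tilde g)] - \risk(g^*) \leq 0$.
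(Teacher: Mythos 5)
Your overall architecture is sound and is in essence the paper's own weak-duality argument in different packaging: the paper, instead of comparing the surrogate Lagrangian at $g^*$, derives exact positive-part representations of $\Exp_{(X,S)}[(2\eta(X,S)-1)g^*(X,S)]$ and $\Exp_{(X,S)}[(2\heta(X,S)-1)\tilde g(X,S)]$ via Lemma~\ref{lem:fpr_nice_formula} and then swaps $\theta^*$ for $\ttheta$ using the dual optimality of $\theta^*$; your terms $(A)$, $(C)$, the Lagrangian comparison with complementary slackness $\Delta_{\pm}(\tilde g)=0$, and the use of $\absin{\ttheta}\leq 2$ are all correct counterparts of those steps, and like the paper they reduce everything to $\ell_1$-distances between $\heta$ and $\eta$.

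The genuine gap is in your last paragraph. Writing $a_n = \Exp_{\data_n}\Exp_{X|S=s}\abs{\eta(X,s)-\heta(X,s)}$, your bad-event contribution is of order $a_n/c_{n,N}$, and nothing in Assumption~\ref{ass:estimator_heta_is_consistent} forces this to vanish: part (ii) couples $c_{n,N}$ to $N$ through $1/(c_{n,N}\sqrt{N}) = o_{n,N}(1)$, not to the $\ell_1$ estimation rate in $n$. The assumptions are perfectly compatible with, say, $c_{n,N}=a_n^2$ together with $N$ growing fast enough that $c_{n,N}\sqrt{N}\to\infty$, in which case $a_n/c_{n,N}=1/a_n\to\infty$; so your claim that ``the lower bound of Assumption~\ref{ass:estimator_heta_is_consistent}(ii) is exactly what keeps this contribution negligible'' is unjustified (in the paper, part (ii) serves the empirical-process terms of the other parts of Theorem~\ref{thm:as_fairness}, not this one). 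The repair is immediate and makes your proof go through: on the bad event do not use the ratio bound at all, but note that each normalized term in $\Delta_{\pm}$ lies in $[0,1]$, so $\abs{\Delta_{\pm}(g^*)}\leq 1$ deterministically, and the bad event then contributes at most a constant times $\Prob(\text{bad event}) \leq 2a_n/\Probcond{Y=1}{S=s} \to 0$ by your own Markov bound, with $c_{n,N}$ never entering. Cleaner still is the paper's route, which avoids the random denominator and the event-splitting altogether: in the add-and-subtract, insert the cross term $\Exp_{X|S=s}[\heta(X,s) g(X,s)]/\Exp_{X|S=s}[\eta(X,s)]$ and use $\Exp_{X|S=s}[\heta(X,s)g(X,s)]\leq \Exp_{X|S=s}[\heta(X,s)]$ so that the random denominator cancels (as in Lemma~\ref{lem:is_g_fair}), yielding a bound of the form $4\,\Exp_{X|S=s}\abs{\eta(X,s)-\heta(X,s)}/\Exp_{X|S=s}[\eta(X,s)]$ whose denominator $\Probcond{Y=1}{S=s}$ is a fixed positive constant; this is why the paper's proof of this theorem needs only part (i) of Assumption~\ref{ass:estimator_heta_is_consistent}.
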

\begin{proof}[Proof of Theorem~\ref{thm:pseudo_oracle_rate}]
First of all, let us rewrite the equation for $\theta^*$ in the following form
\begin{align*}
    &{\Exp_{X | S = 1}\left[\frac{\theta^*\eta(X, 1)}{\Exp_{X | S = 1}[\eta(X, 1)]}\ind{\Prob(S = 1)(2\eta(X, 1) - 1) - \frac{\theta^*\eta(X, 1)}{\Exp_{X | S = 1}[\eta(X, 1)]} \geq 0}\right]}\\
    &=
    {\Exp_{X | S = 0}\left[\frac{\theta^*\eta(X, 0)}{\Exp_{X | S = 0}[\eta(X, 0)]}\ind{\Prob(S = 0)(2\eta(X, 0) - 1) + \frac{\theta^*\eta(X, 0)}{\Exp_{X | S = 0}[\eta(X, 0)]} \geq 0}\right]}\enspace.
\end{align*}
Using Lemma~\ref{lem:fpr_nice_formula} with $h_s(\cdot) \equiv \eta(\cdot, s), a_s = \Exp_{X | S = 1}[h_s(\cdot)], b_s = \Prob(S = s)$ for $s \in \{0, 1\}$ we get
\begin{align*}
    &\Prob(S = 1)\Exp_{X | S = 1}[(2\eta(X, 1) - 1)g^*(X, 1)]  \\
    & \quad - \Exp_{X | S = 1}\parent{\Prob(S = 1)(2\eta(X, 1) - 1) - \frac{\theta^*\eta(X, 1)}{\Exp_{X | S = 1}[\eta(X, 1)]}}_+\\
    &=
    -\Prob(S = 0)\Exp_{X | S = 0}[(2\eta(X, 0) - 1)g^*(X, 0)]  \\
    & \quad + \Exp_{X | S = 0}\parent{\Prob(S = 0)(2\eta(X, 0) - 1) + \frac{\theta^*\eta(X, 0)}{\Exp_{X | S = 0}[\eta(X, 0)]}}_+\enspace.
\end{align*}
Rearranging the terms we can arrive at
\begin{align*}
    &\Prob(S = 1)\Exp_{X | S = 1}[(2\eta(X, 1) - 1)g^*(X, 1)] + \Prob(S = 0)\Exp_{X | S = 0}[(2\eta(X, 0) - 1)g^*(X, 0)]
    \\
    &=
    \Exp_{X | S = 1}\parent{\Prob(S = 1)(2\eta(X, 1) - 1) - \frac{\theta^*\eta(X, 1)}{\Exp_{X | S = 1}[\eta(X, 1)]}}_+  \\
    & \quad  + \Exp_{X | S = 0}\parent{\Prob(S = 0)(2\eta(X, 0) - 1) + \frac{\theta^*\eta(X, 0)}{\Exp_{X | S = 0}[\eta(X, 0)]}}_+\enspace.
\end{align*}
Notice that the left hand side of the above equality can be written as
\begin{align}
    \label{eq:almost_risk_g_star}
    & \Exp_{(X, S)}[(2\eta(X, S) - 1)g^*(X, S)] \nonumber \\
    & =
    \Exp_{X | S = 1}\parent{\Prob(S = 1)(2\eta(X, 1) - 1) - \frac{\theta^*\eta(X, 1)}{\Exp_{X | S = 1}[\eta(X, 1)]}}_+\\
    &\phantom{=}+ \Exp_{X | S = 0}\parent{\Prob(S = 0)(2\eta(X, 0) - 1) + \frac{\theta^*\eta(X, 0)}{\Exp_{X | S = 0}[\eta(X, 0)]}}_+\enspace.\nonumber
\end{align}
Thus, combining the previous equality with the expression of the risk from Lemma~\ref{lem:risk_expression} we get
\begin{align}
    \label{eq:risk_g_star_cool}
    \risk(g^*) = \Exp_{(X, S)}[\eta(X, S)] &- \Exp_{X | S = 1}\parent{\Prob(S = 1)(2\eta(X, 1) - 1) - \frac{\theta^*\eta(X, 1)}{\Exp_{X | S = 1}[\eta(X, 1)]}}_+\\
    &- \Exp_{X | S = 0}\parent{\Prob(S = 0)(2\eta(X, 0) - 1) + \frac{\theta^*\eta(X, 0)}{\Exp_{X | S = 0}[\eta(X, 0)]}}_+\enspace.\nonumber
\end{align}
Step-wise similar argument yields that for the pseudo-oracle $\tilde g$ we can write
\begin{align}
    \label{eq:tilde_g_nice_risk}
    & \Exp_{(X, S)}[(2\heta(X, S) - 1)\tilde g(X, S)] \nonumber \\
    &=
    \Exp_{X | S = 1}\parent{\Prob(S = 1)(2\heta(X, 1) - 1) - \frac{\ttheta\heta(X, 1)}{\Exp_{X | S = 1}[\heta(X, 1)]}}_+\\
    &\phantom{=}+ \Exp_{X | S = 0}\parent{\Prob(S = 0)(2\heta(X, 0) - 1) + \frac{\ttheta\heta(X, 0)}{\Exp_{X | S = 0}[\heta(X, 0)]}}_+\enspace.\nonumber
\end{align}
Moreover, its risk satisfies
\begin{align}
    \label{eq:risk_g_tilde_cool}
    \risk(\tilde g) &= \Exp_{(X, S)}[\eta(X, S)] - \Exp_{(X, S)}[(2\eta(X, S) - 1)\tilde g(X, S)]\\
    &\leq
    \Exp_{(X, S)}[\eta(X, S)] - \Exp_{(X, S)}[(2\heta(X, S) - 1)\tilde g(X, S)] + 2\Exp_{(X, S)}\abs{\heta(X, S) - \eta(X, S)}\enspace.\nonumber
\end{align}
Therefore, combining Eq.~\eqref{eq:risk_g_star_cool} with Eq.~\eqref{eq:risk_g_tilde_cool}, we can write for the excess risk
\begin{align*}
    {\risk(\tilde g) - \risk(g^*)} \leq & 2\Exp_{(X, S)}\abs{\heta(X, S) - \eta(X, S)}\\
    &+
    \Exp_{X | S = 1}\parent{\Prob(S = 1)(2\eta(X, 1) - 1) - \frac{\theta^*\eta(X, 1)}{\Exp_{X | S = 1}[\eta(X, 1)]}}_+ \\
    & - \Exp_{X | S = 1}\parent{\Prob(S = 1)(2\heta(X, 1) - 1) - \frac{\ttheta\heta(X, 1)}{\Exp_{X | S = 1}[\heta(X, 1)]}}_+\\
    &+
    \Exp_{X | S = 0}\parent{\Prob(S = 0)(2\eta(X, 0) - 1) + \frac{\theta^*\eta(X, 0)}{\Exp_{X | S = 0}[\eta(X, 0)]}}_+ \\
    & - \Exp_{X | S = 0}\parent{\Prob(S = 0)(2\heta(X, 0) - 1) + \frac{\ttheta\heta(X, 0)}{\Exp_{X | S = 0}[\heta(X, 0)]}}_+\enspace.
\end{align*}
Recall that $\theta^*$ is a minimizer of
\begin{align*}
    &\Exp_{X | S = 1}\parent{\Prob(S = 1)(2\eta(X, 1) - 1) - \frac{\theta\eta(X, 1)}{\Exp_{X | S = 1}[\eta(X, 1)]}}_+ \\
    & \quad + \Exp_{X | S = 0}\parent{\Prob(S = 0)(2\eta(X, 0) - 1) + \frac{\theta\eta(X, 0)}{\Exp_{X | S = 0}[\eta(X, 0)]}}_+\enspace,
\end{align*}
thus we can replace $\theta^*$ by $\ttheta$ and obtain the following upper bound
\begin{align*}
    {\risk(\tilde g) - \risk(g^*)} \leq &  2 \Exp_{(X, S)}\abs{\heta(X, S) - \eta(X, S)}\\
    &+
    \Exp_{X | S = 1}\parent{\Prob(S = 1)(2\eta(X, 1) - 1) - \frac{\ttheta\eta(X, 1)}{\Exp_{X | S = 1}[\eta(X, 1)]}}_+ \\
    & - \Exp_{X | S = 1}\parent{\Prob(S = 1)(2\heta(X, 1) - 1) - \frac{\ttheta\heta(X, 1)}{\Exp_{X | S = 1}[\heta(X, 1)]}}_+\\
    &+
    \Exp_{X | S = 0}\parent{\Prob(S = 0)(2\eta(X, 0) - 1) + \frac{\ttheta\eta(X, 0)}{\Exp_{X | S = 0}[\eta(X, 0)]}}_+ \\
    & - \Exp_{X | S = 0}\parent{\Prob(S = 0)(2\heta(X, 0) - 1) + \frac{\ttheta\heta(X, 0)}{\Exp_{X | S = 0}[\heta(X, 0)]}}_+\enspace.
\end{align*}
Since, for all $x, y \in \bbR$ we have $(x)_+ - (y)_+ \leq (x - y)_+ \leq \abs{x - y}$ we get
\begin{align*}
    {\risk(\tilde g) - \risk(g^*)} \leq & 4\Exp_{(X, S)}\abs{\heta(X, S) - \eta(X, S)}\\
    &+
    \Exp_{X | S = 1}|\ttheta|\abs{\frac{\heta(X, 1)}{\Exp_{X | S = 1}[\heta(X, 1)]} - \frac{\eta(X, 1)}{\Exp_{X | S = 1}[\eta(X, 1)]}} \\
    & +
    \Exp_{X | S = 0}|\ttheta|\abs{\frac{\heta(X, 0)}{\Exp_{X | S = 0}[\heta(X, 0)]} - \frac{\eta(X, 0)}{\Exp_{X | S = 0}[\eta(X, 0)]}}\enspace.
\end{align*}
For the same reason why $\abs{\theta^*} \leq 2$ we have $\absin{\ttheta} \leq 2$, thus for all $s \in \{0, 1\}$ we have
\begin{align*}
    & \Exp_{X | S = s}|\ttheta|\abs{\frac{\heta(X, s)}{\Exp_{X | S = s}[\heta(X, s)]} - \frac{\eta(X, s)}{\Exp_{X | S = s}[\eta(X, s)]}} \\
    &\leq
    2\Exp_{X | S = s}\abs{\frac{\heta(X, s)}{\Exp_{X | S = s}[\heta(X, s)]} - \frac{\eta(X, s)}{\Exp_{X | S = s}[\eta(X, s)]}}\\
    &\leq
    2\Exp_{X | S = s}\abs{\frac{\heta(X, s)}{\Exp_{X | S = s}[\eta(X, s)]} - \frac{\eta(X, s)}{\Exp_{X | S = s}[\eta(X, s)]}}\\
    &+
    2\Exp_{X | S = s}\abs{\frac{\heta(X, s)}{\Exp_{X | S = s}[\heta(X, s)]} - \frac{\heta(X, s)}{\Exp_{X | S = s}[\eta(X, s)]}}\\
    &\leq 4\frac{\Exp_{X | S = s}\abs{\eta(X, s) - \heta(X, s)}}{\Exp_{X | S = s}[\eta(X, s)]}\enspace.
\end{align*}
Thanks to Assumption~\ref{ass:estimator_heta_is_consistent}, these terms converge to zero in expectation.
\end{proof}

\begin{theorem}
    Let $\hat g$ be the proposed classifier with $\heta$ satisfying Assumptions~\ref{ass:estimator_heta_is_consistent} and~\ref{ass:continuity_estimator}, then
    \begin{align*}
    \lim_{n \rightarrow \infty}\Exp_{(\data_n, \data_N)}{[\risk(\hat g) - \risk(\tilde g)}] \leq 0\enspace.
    \end{align*}
\end{theorem}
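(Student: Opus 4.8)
The plan is to compare $\hat g$ and $\tilde g$ through a surrogate risk built from $\heta$, and then to exploit the fact that, in the empirical world, $\hat g$ solves \emph{exactly} the Lagrangian relaxation that defines it. Throughout I work conditionally on $\data_n$, so that $\heta$ is frozen and the only randomness is $\data_N$. Introduce the surrogate risk $\risk_\heta(g) = \Exp_{(X,S)}[\heta(X,S)] - \Exp_{(X,S)}[(2\heta(X,S)-1)g(X,S)]$ and its empirical counterpart $\hat\risk_\heta(g) = \hExp_{(X,S)}[\heta(X,S)] - \hExp_{(X,S)}[(2\heta(X,S)-1)g(X,S)]$, where $\hExp_{(X,S)}[f] = \sum_{s}\hProb_S(S=s)\hExp_{X|S=s}[f(X,s)]$. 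Let also $\hat\Phi(g) = \frac{\hExp_{X | S = 1}\heta(X,1)g(X,1)}{\hExp_{X | S = 1}\heta(X,1)} - \frac{\hExp_{X | S = 0}\heta(X,0)g(X,0)}{\hExp_{X | S = 0}\heta(X,0)}$ be the signed empirical fairness gap, so that $\hat\Delta(g,\Prob) = \abs{\hat\Phi(g)}$, and let $\Phi(g)$ be its population analogue (replace $\hExp$ by $\Exp$).

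First I would reduce the claim to the surrogate risk. By Lemma~\ref{lem:risk_expression}, $\risk(\hat g) - \risk(\tilde g) = \Exp_{(X,S)}[(2\eta-1)(\tilde g - \hat g)]$; replacing $\eta$ by $\heta$ inside this expectation costs at most $2\Exp_{(X,S)}\abs{\eta-\heta}$, so
\begin{align*}
  \risk(\hat g) - \risk(\tilde g) \leq \risk_\heta(\hat g) - \risk_\heta(\tilde g) + 2\Exp_{(X,S)}\abs{\eta(X,S) - \heta(X,S)}\enspace,
\end{align*}
and the last term vanishes in expectation by Assumption~\ref{ass:estimator_heta_is_consistent}(i). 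The key structural observation is that, repeating the computation of Proposition~\ref{lem:optimal_rule} verbatim but with the empirical measures $\hExp_{X|S=s},\hProb_S$ and $\heta$ in place of $\eta$, for \emph{every} $\theta$ the classifier $\hat g_\theta$ is the pointwise minimiser of the affine-in-$g$ empirical Lagrangian $g \mapsto \hat\risk_\heta(g) + \theta\,\hat\Phi(g)$. Taking $\theta = \htheta$, so that $\hat g = \hat g_{\htheta}$ is a minimiser, and comparing against the competitor $\tilde g$ gives $\hat\risk_\heta(\hat g) + \htheta\hat\Phi(\hat g) \leq \hat\risk_\heta(\tilde g) + \htheta\hat\Phi(\tilde g)$, hence, using $\abs{\htheta}\leq 2$,
\begin{align*}
  \hat\risk_\heta(\hat g) - \hat\risk_\heta(\tilde g) \leq 2\abs{\hat\Phi(\hat g)} + 2\abs{\hat\Phi(\tilde g)}\enspace.
\end{align*}

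I would then split the surrogate excess risk as
\begin{align*}
  \risk_\heta(\hat g) - \risk_\heta(\tilde g) = \big(\risk_\heta(\hat g) - \hat\risk_\heta(\hat g)\big) + \big(\hat\risk_\heta(\hat g) - \hat\risk_\heta(\tilde g)\big) + \big(\hat\risk_\heta(\tilde g) - \risk_\heta(\tilde g)\big)\enspace,
\end{align*}
where the middle term is handled by the previous display. The two outer terms are population-versus-empirical deviations of a surrogate risk evaluated at threshold rules in $\heta$; conditionally on $\data_n$ they are dominated by $\sup_{t}\abs{(\Exp_{(X,S)} - \hExp_{(X,S)})(2\heta-1)\ind{t\leq\heta}}$ and by the deviations $\abs{\hProb_S(S=s)-\Prob(S=s)}$, both of order $\mathcal{O}(1/\sqrt N)$ in expectation by the empirical-process estimate of Lemma~\ref{lem:emp_proc} (with $2\heta-1$ in place of $\heta$) and a Chernoff bound on the group proportions. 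For the fairness gaps, $\abs{\hat\Phi(\hat g)} = \hat\Delta(\hat g,\Prob)$ is precisely the quantity already shown to vanish in expectation in Part~I; and since $\tilde g$ is population fair with respect to $\heta$, i.e. $\Phi(\tilde g)=0$ by the defining equation~\eqref{eq:theta_tilde} of $\ttheta$, we have $\abs{\hat\Phi(\tilde g)} = \abs{\hat\Phi(\tilde g) - \Phi(\tilde g)}$, a deviation that the same empirical-process argument bounds by $\mathcal{O}\big(1/(c_{n,N}\sqrt N)\big)$ after absorbing the denominators $\Exp_{X|S=s}[\heta]\geq c_{n,N}$ via Assumption~\ref{ass:estimator_heta_is_consistent}(ii). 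Collecting all bounds and taking $\Exp_{(\data_n,\data_N)}$, each contribution tends to $0$ as $n,N\to\infty$, which yields the claim.

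The main obstacle is the uniform empirical-process control of the outer terms and of $\hat\Phi(\tilde g),\hat\Phi(\hat g)$: because $\hat g$ depends on $\data_N$ both through $\htheta$ and through the empirical normalisers $\hProb(Y=1,S=s)$, the deviations cannot be controlled at a fixed classifier but only uniformly over the whole family $\{\ind{t\leq\heta}\}_{t\in[0,1]}$, which is exactly why the analysis is carried out conditionally on $\data_n$ and why the divisions by $\Exp_{X|S=s}[\heta]$ must be compensated by the vanishing lower bound $c_{n,N}$ with $1/(c_{n,N}\sqrt N)=o_{n,N}(1)$. Establishing the empirical weak-duality identity — that $\hat g_\theta$ is the genuine minimiser of the empirical Lagrangian for every $\theta$ — is the conceptual heart of the argument, since it is what lets the middle term collapse to fairness gaps already controlled in Part~I.
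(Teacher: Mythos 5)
Your proposal is correct, but it takes a genuinely different route from the paper's at the crucial comparison step. The paper expands both risks into positive-part (hinge) functionals via Lemma~\ref{lem:fpr_nice_formula} (Eqs.~\eqref{eq:risk_g_tilde_cool_lower} and~\eqref{eq:almost_risk_hat_g}), replaces $\ttheta$ by $\htheta$ using that $\ttheta$ minimises the population hinge objective in $\theta$, and is then forced to control the empirical-versus-population deviation of hinge functionals at the \emph{random} multiplier $\htheta$ --- the term $(\star)$ --- via a supremum over the shifted-hinge class indexed by $t \in [-2/c_{n,N}, 2/c_{n,N}]$, handled by symmetrization plus the contraction lemma and yielding the rate $C/(c_{n,N}\sqrt{N})$. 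You instead observe that for \emph{every} $\theta$ the plug-in rule $\hat g_\theta$ is the exact pointwise minimiser of the empirical Lagrangian $g \mapsto \hat\risk_\heta(g) + \theta\hat\Phi(g)$ --- a computation that checks out: the group-$s$ coefficient of $g$ reproduces precisely the thresholds defining $\hat g_\theta$, using $\hProb(Y=1, S=s) = \hExp_{X | S = s}[\heta(X,s)]\,\hProb_S(S=s)$ --- and you use $\tilde g$ itself as the competitor at $\theta = \htheta$. This collapses the middle term to $2\bigl(\absin{\hat\Phi(\hat g)} + \absin{\hat\Phi(\tilde g)}\bigr)$ via $\absin{\htheta}\le 2$, where $\absin{\hat\Phi(\hat g)} = \hat\Delta(\hat g, \Prob)$ is already controlled in Part~I of Theorem~\ref{thm:as_fairness}, and $\absin{\hat\Phi(\tilde g)} = \absin{\hat\Phi(\tilde g) - \Phi(\tilde g)}$ because Eq.~\eqref{eq:theta_tilde} gives $\Phi(\tilde g) = 0$; the remaining outer terms are plain threshold-class deviations covered by Lemma~\ref{lem:emp_proc} with $2\heta - 1$ in place of $\heta$, exactly as the paper's own footnote allows. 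Your route thus dispenses with Lemma~\ref{lem:fpr_nice_formula}, with the minimising property of $\ttheta$ (you use only its fairness $\Phi(\tilde g)=0$, so the same argument would apply verbatim to any $\heta$-fair competitor), and --- most significantly --- with the delicate shifted-hinge process $(\star)$: the factor $1/c_{n,N}$ enters your bound only through the normalisers in $\hat\Phi$ and $\Phi$, just as in Part~I. What the paper's heavier machinery buys is the explicit hinge representation that it reuses in Theorem~\ref{thm:pseudo_oracle_rate}; your argument is shorter and more modular. Two cosmetic points, both shared with the paper's own write-up: the single-parameter supremum $\sup_{t}\absin{(\Exp_{(X,S)} - \hExp_{(X,S)})[(2\heta - 1)\ind{t \le \heta}]}$ should strictly be a per-group (two-threshold) supremum, since $\hat g$ and $\tilde g$ threshold each group separately --- your decomposition into per-group deviations plus $\absin{\hProb_S(S=s) - \Prob(S=s)}$ handles this correctly --- and when $2 - \htheta/\hProb(Y=1,S=1) \le 0$ the rule is identically zero on that group, a degenerate case the threshold-class bound covers trivially.
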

\begin{proof}
    Our goal is to upper bound the quantity $\Exp_{(\data_n, \data_N)}{\risk(\hat g) - \risk(\tilde g)}$.
    We start by providing a bound on $\risk(\hat g) - \risk(\tilde g)$ which holds almost surely.
    Recall the equality of Equation~\eqref{eq:tilde_g_nice_risk}
    \begin{align*}
    & \Exp_{(X, S)}[(2\heta(X, S) - 1)\tilde g(X, S)] \\
    &=
    \Exp_{X | S = 1}\parent{\Prob(S = 1)(2\heta(X, 1) - 1) - \frac{\ttheta\heta(X, 1)}{\Exp_{X | S = 1}[\heta(X, 1)]}}_+\\
    &\phantom{=}+ \Exp_{X | S = 0}\parent{\Prob(S = 0)(2\heta(X, 0) - 1) + \frac{\ttheta\heta(X, 0)}{\Exp_{X | S = 0}[\heta(X, 0)]}}_+\enspace.
  \end{align*}
  Using this and the expression of the risk given in Lemma~\ref{lem:risk_expression} we can obtain the following lower bound on the risk of $\tilde g$
  \begin{align}
    \label{eq:risk_g_tilde_cool_lower}
      \risk(\tilde g)
      &=
      \Exp_{(X, S)}[\eta(X, S)] - \Exp_{(X, S)}[(2\eta(X, S) - 1)\tilde g(X, S)]\nonumber\\
      &\geq
      \Exp_{(X, S)}[\eta(X, S)] - \Exp_{(X, S)}[(2\heta(X, S) - 1)\tilde g(X, S)] - 2\Exp_{(X, S)}\abs{\heta(X, S) - \eta(X, S)}\nonumber\\
      &=
      \Exp_{(X, S)}[\eta(X, S)] - 2\Exp_{(X, S)}\abs{\heta(X, S) - \eta(X, S)}\\
      &- \Exp_{X | S = 1}\parent{\Prob(S = 1)(2\heta(X, 1) - 1) - \frac{\ttheta\heta(X, 1)}{\Exp_{X | S = 1}[\heta(X, 1)]}}_+\nonumber\\
      &-
      \Exp_{X | S = 0}\parent{\Prob(S = 0)(2\heta(X, 0) - 1) + \frac{\ttheta\heta(X, 0)}{\Exp_{X | S = 0}[\heta(X, 0)]}}_+\enspace.\nonumber
  \end{align}
  We have thanks to Lemma~\ref{lem:fpr_nice_formula} used with $h_s(\cdot) = \heta(\cdot, s), a_s = \hExp_{X | S = s}[h_s(X)], b_s = \hProb(S = s)$ for all $s \in \{0, 1\}$
  \begin{align}
    \label{eq:fairness_empirical_hat_g_1}
    \frac{\hExp_{X | S = 1}\htheta\heta(X, 1)\hat g(X, 1)}{\hExp_{X | S = 1}\heta(X, 1)} &= \hExp_{X | S = 1}[(2\heta(X, 1) - 1)\hat g(X, 1)]\hProb(S = 1)\\
    &- \hExp_{X | S = 1}\parent{\hProb(S = 1)(2\heta(X, 1) - 1) - \frac{\htheta \heta(X, 1)}{\hExp_{X | S = 1}[\heta(X, 1)]}}_+\enspace,\nonumber
  \end{align}
  and
  \begin{align}
      \label{eq:fairness_empirical_hat_g_0}
      \frac{\hExp_{X | S = 0}\htheta\heta(X, 0)\hat g(X, 0)}{\hExp_{X | S = 0}\heta(X, 0)} &= -\hExp_{X | S = 0}[(2\heta(X, 0) - 1)\hat g(X, 0)]\hProb(S = 0)\\
    &+ \hExp_{X | S = 0}\parent{\hProb(S = 0)(2\heta(X, 0) - 1) + \frac{\htheta \heta(X, 0)}{\hExp_{X | S = 0}[\heta(X, 0)]}}_+\enspace.\nonumber
  \end{align}
  Recall, that thanks to Definition~\ref{def:empirical_unfairness} of the empirical unfairness we have
  \begin{align*}
      \absin{\htheta}\hat\Delta(\hat g, \Prob) = \abs{\frac{\hExp_{X | S = 0}\htheta\heta(X, 0)\hat g(X, 0)}{\hExp_{X | S = 0}\heta(X, 0)} - \frac{\hExp_{X | S = 1}\htheta\heta(X, 1)\hat g(X, 1)}{\hExp_{X | S = 1}\heta(X, 1)}}\enspace.
  \end{align*}
  Since, $\absin{\htheta} \leq 2$, subtracting Eq.~\eqref{eq:fairness_empirical_hat_g_0} from Eq.~\eqref{eq:fairness_empirical_hat_g_1} and taking absolute value combined with the triangle inequality we get
  \begin{align}
      \label{eq:almost_risk_hat_g}
      & \hExp_{(X, S)}(2\heta(X, S) - 1)\hat g(X, S) \nonumber \\
      &= \hExp_{X | S = 0}[(2\heta(X, 0) - 1)\hat g(X, 0)] \hProb(S = 0) + \hExp_{X | S = 1}[(2\heta(x, 1) - 1)\hat g(X, 1)]\hProb(S = 1)\\
      &\geq -2\hat\Delta(\hat g, \Prob) +
      \hExp_{X | S = 0}\parent{\hProb(S = 0)(2\heta(X, 0) - 1) + \frac{\htheta \heta(X, 0)}{\hExp_{X | S = 0}[\heta(X, 0)]}}_+\nonumber\\
      & \quad  +
      \hExp_{X | S = 1}\parent{\hProb(S = 1)(2\heta(X, 1) - 1) - \frac{\htheta \heta(X, 1)}{\hExp_{X | S = 1}[\heta(X, 1)]}}_+\enspace.\nonumber
  \end{align}
  Note that using the bound above we can get the following upper bound on the risk of the proposed classifier
  \begin{align*}
    \risk(\hat g)
    &=
    \Exp_{(X, S)}[\eta(X, S)] - \Exp_{(X, S)}[(2\eta(X, S) - 1)\hat g(X, S)]\\
    &\leq
    \Exp_{(X, S)}[\eta(X, S)] - \Exp_{(X, S)}[(2\heta(X, S) - 1)\hat g(X, S)] \\
    & \quad +2\Exp_{(X, S)}\abs{\eta(X, S) - \heta(X, S)} \qquad\text{ (replaced $\eta$ by $\heta$)}\\
    &\leq
    \Exp_{(X, S)}[\eta(X, S)] - \hExp_{(X, S)}[(2\heta(X, S) - 1)\hat g(X, S)]+2\Exp_{(X, S)}\abs{\eta(X, S) - \heta(X, S)} \\
    & \quad + \abs{(\Exp_{(X, S)} - \hExp_{(X, S)})[(2\heta(X, S) - 1)\hat g(X, S)]} \qquad\text{ (replaced $\Exp_{(X, S)}$ by $\hExp_{(X, S)}$)}\\
    &\leq
    \Exp_{(X, S)}[\eta(X, S)] - \hExp_{(X, S)}[(2\heta(X, S) - 1)\hat g(X, S)]+2\Exp_{(X, S)}\abs{\eta(X, S) - \heta(X, S)} \\
    & \quad + \sup_{t \in [0, 1]}\abs{(\Exp_{(X, S)} - \hExp_{(X, S)})[(2\heta(X, S) - 1)\ind{t \leq \heta(X, S)}]}\qquad\text{ (since $\hat g$ is thresholding)}\\
    &\leq
    \Exp_{(X, S)}[\eta(X, S)] +2\Exp_{(X, S)}\abs{\eta(X, S) - \heta(X, S)}\\
    &\quad + 2\hat\Delta(\hat g, \Prob) + \sup_{t \in [0, 1]}\abs{(\Exp_{(X, S)} - \hExp_{(X, S)})[(2\heta(X, S) - 1)\ind{t \leq \heta(X, S)}]}\\
    &\quad -
    \hExp_{X | S = 0}\parent{\hProb(S = 0)(2\heta(X, 0) - 1) + \frac{\htheta \heta(X, 0)}{\hExp_{X | S = 0}[\heta(X, 0)]}}_+\\
    &\quad -
    \hExp_{X | S = 1}\parent{\hProb(S = 1)(2\heta(X, 1) - 1) - \frac{\htheta \heta(X, 1)}{\hExp_{X | S = 1}[\heta(X, 1)]}}_+\qquad\text{ (after Eq.~\eqref{eq:almost_risk_hat_g})}\enspace.
  \end{align*}
  Thus, combining this upper bound on $\risk(\hat g)$ with the lower bound on $\risk(\tilde g)$ given in Eq.~\eqref{eq:risk_g_tilde_cool_lower} we arrive at
  \begin{align*}
      \risk(\hat g) - \risk(\tilde g)
      & \leq
      4\Exp_{(X, S)}\abs{\eta(X, S) - \heta(X, S)} + 2\hat\Delta(\hat g, \Prob) \\
      & \quad + \sup_{t \in [0, 1]}\abs{(\Exp_{(X, S)} - \hExp_{(X, S)})[(2\heta(X, S) - 1)\ind{t \leq \heta(X, S)}]}\\
      & \quad +
      \Exp_{X | S = 1}\parent{\Prob(S = 1)(2\heta(X, 1) - 1) - \frac{\ttheta\heta(X, 1)}{\Exp_{X | S = 1}[\heta(X, 1)]}}_+ \\
      & \quad -
      \hExp_{X | S = 1}\parent{\hProb(S = 1)(2\heta(X, 1) - 1) - \frac{\htheta \heta(X, 1)}{\hExp_{X | S = 1}[\heta(X, 1)]}}_+\\
      & \quad +
      \Exp_{X | S = 0}\parent{\Prob(S = 0)(2\heta(X, 0) - 1) + \frac{\ttheta\heta(X, 0)}{\Exp_{X | S = 0}[\heta(X, 0)]}}_+ \\
      & \quad -
      \hExp_{X | S = 0}\parent{\hProb(S = 0)(2\heta(X, 0) - 1) + \frac{\htheta \heta(X, 0)}{\hExp_{X | S = 0}[\heta(X, 0)]}}_+\enspace.
  \end{align*}
  Thanks to Lemma~\ref{lem:emp_proc} the term $\sup_{t \in [0, 1]}\abs{(\Exp_{(X, S)} - \hExp_{(X, S)})[(2\heta(X, S) - 1)\ind{t \leq \heta(X, S)}]}$ converges to zero in expectation\footnote{Actually Lemma~\ref{lem:emp_proc} is stated with $\heta(X, S)$, whereas here it is $(2\heta(X, S) - 1)$. A straightforward modification of the argument used in Lemma~\ref{lem:emp_proc} yields the desired result.}.
  Equation~\eqref{eq:empirical_unfairness_bound} with Lemma~\ref{lem:emp_proc} gives the convergence to zero of $\hat\Delta(\hat g, \Prob)$ in expectation.
  Assumption~\ref{ass:estimator_heta_is_consistent} tells us that the term $\Exp_{(X, S)}\abs{\eta(X, S) - \heta(X, S)}$ goes to zero in expectation.
  Thus it only remains to bound the term
  \begin{align*}
     (*) = &\Exp_{X | S = 1}\parent{\Prob(S = 1)(2\heta(X, 1) - 1) - \frac{\ttheta\heta(X, 1)}{\Exp_{X | S = 1}[\heta(X, 1)]}}_+\\
      & \quad -
      \hExp_{X | S = 1}\parent{\hProb(S = 1)(2\heta(X, 1) - 1) - \frac{\htheta \heta(X, 1)}{\hExp_{X | S = 1}[\heta(X, 1)]}}_+\enspace\\
      & \quad +\Exp_{X | S = 0}\parent{\Prob(S = 0)(2\heta(X, 0) - 1) + \frac{\ttheta\heta(X, 0)}{\Exp_{X | S = 0}[\heta(X, 0)]}}_+\\
      & \quad -
      \hExp_{X | S = 0}\parent{\hProb(S = 0)(2\heta(X, 0) - 1) + \frac{\htheta \heta(X, 0)}{\hExp_{X | S = 0}[\heta(X, 0)]}}_+\enspace.
  \end{align*}

  Notice that (similarly to the case of $\theta^*$) the condition in Eq.~\eqref{eq:theta_tilde} on $\ttheta$ is the first order optimality condition for the minimum of the following function
  \begin{align*}
    & \Exp_{X | S = 1}\parent{\Prob(S = 1)(2\heta(X, 1) - 1) - \frac{\theta\heta(X, 1)}{\Exp_{X | S = 1}[\heta(X, 1)]}}_+ \\
      & \quad + \Exp_{X | S = 0}\parent{\Prob(S = 0)(2\heta(X, 0) - 1) + \frac{\theta\heta(X, 0)}{\Exp_{X | S = 0}[\heta(X, 0)]}}_+\enspace,
  \end{align*}
  thus, the objective evaluated at minimum, that is, at $\ttheta$ is less or equal than the one evaluated at $\htheta$.
  Which implies that in order to upper bound $(*)$ it is sufficient to provide an upper bound on

  \begin{align*}
     (**) = &\Exp_{X | S = 1}\parent{\Prob(S = 1)(2\heta(X, 1) - 1) - \frac{\htheta\heta(X, 1)}{\Exp_{X | S = 1}[\heta(X, 1)]}}_+ \\
      & -
      \hExp_{X | S = 1}\parent{\hProb(S = 1)(2\heta(X, 1) - 1) - \frac{\htheta \heta(X, 1)}{\hExp_{X | S = 1}[\heta(X, 1)]}}_+\\
      & +\Exp_{X | S = 0}\parent{\Prob(S = 0)(2\heta(X, 0) - 1) + \frac{\htheta\heta(X, 0)}{\Exp_{X | S = 0}[\heta(X, 0)]}}_+\\
      & -
      \hExp_{X | S = 0}\parent{\hProb(S = 0)(2\heta(X, 0) - 1) + \frac{\htheta \heta(X, 0)}{\hExp_{X | S = 0}[\heta(X, 0)]}}_+\enspace,
  \end{align*}
  where we replaced $\ttheta$ by $\htheta$ thanks to the optimality of $\ttheta$.
  Let us define
  \begin{align*}
    (\triangle) = & \Exp_{X | S = 1}\parent{\Prob(S = 1)(2\heta(X, 1) - 1) - \frac{\htheta\heta(X, 1)}{\Exp_{X | S = 1}[\heta(X, 1)]}}_+\\
      &  -
      \hExp_{X | S = 1}\parent{\hProb(S = 1)(2\heta(X, 1) - 1) - \frac{\htheta \heta(X, 1)}{\hExp_{X | S = 1}[\heta(X, 1)]}}_+\enspace,\\
      (\triangle \triangle) =  &\Exp_{X | S = 0}\parent{\Prob(S = 0)(2\heta(X, 0) - 1) + \frac{\htheta\heta(X, 0)}{\Exp_{X | S = 0}[\heta(X, 0)]}}_+\\
      &  -
      \hExp_{X | S = 0}\parent{\hProb(S = 0)(2\heta(X, 0) - 1) + \frac{\htheta \heta(X, 0)}{\hExp_{X | S = 0}[\heta(X, 0)]}}_+\enspace.
  \end{align*}
  Both bounds are following similar arguments, we demonstrate it for $(\triangle)$, clearly we have
  \begin{align*}
    (\triangle) \leq &\hExp_{X | S = 1}\parent{\Prob(S = 1)(2\heta(X, 1) - 1) - \frac{\htheta\heta(X, 1)}{\Exp_{X | S = 1}[\heta(X, 1)]}}_+\\
      & -
      \hExp_{X | S = 1}\parent{\hProb(S = 1)(2\heta(X, 1) - 1) - \frac{\htheta \heta(X, 1)}{\hExp_{X | S = 1}[\heta(X, 1)]}}_+\\
      &+ \abs{(\Exp_{X | S = 1} - \hExp_{X | S = 1})\parent{\Prob(S = 1)(2\heta(X, 1) - 1) - \frac{\htheta\heta(X, 1)}{\Exp_{X | S = 1}[\heta(X, 1)]}}_+}\enspace.
  \end{align*}
  For the first difference on the right hand side of this inequality we can write using the fact that $(x)_+ - (y)_+ \leq \abs{x - y}$ for all $x, y \in \bbR$ and $\abs{2\heta(X, 1) - 1} \leq 1$ almost surely
  \begin{align*}
    & \hExp_{X | S = 1}\parent{\Prob(S = 1)(2\heta(X, 1) - 1) - \frac{\htheta\heta(X, 1)}{\Exp_{X | S = 1}[\heta(X, 1)]}}_+\\
      & -
      \hExp_{X | S = 1}\parent{\hProb(S = 1)(2\heta(X, 1) - 1) - \frac{\htheta \heta(X, 1)}{\hExp_{X | S = 1}[\heta(X, 1)]}}_+ \\
      &\leq \abs{\Prob(S = 1) - \hProb(S = 1)} + \absin{\htheta}\abs{\frac{\hExp_{X | S = 1}[\heta(X, 1)]}{\Exp_{X | S = 1}[\heta(X, 1)]} - 1}
  \end{align*}
  Clearly $\abs{\Prob(S = 1) - \hProb(S = 1)}$ goes to zero in expectation thanks to the law of large numbers or its finite sample variants.
  Besides,  the term $\abs{\frac{\hExp_{X | S = 0}[\heta(X, 0)]}{\Exp_{X | S = 0}[\eta(X, 0)]} - 1}$ can be seen in the following manner:
  let $Z \in [0, 1]$ be a random variable with law $\Prob_Z$ and $Z_1, \ldots, Z_M$ be its \iid realization, then sequentially our question is about
  \begin{align*}
      \abs{1 - \frac{\bar Z}{\Exp[Z]}}\enspace,
  \end{align*}
  with $\bar Z = \frac{1}{M}\sum_{i = 1}^M Z_i$.
  This term converges to zero in expectation thanks to the multiplicative Chernoff inequality, which is an exponential concentration inequality that allows to obtain even a rate.
  Actually, even without the multiplicative Chernoff bound this term goes to zero thanks to the law of large numbers.
  Therefore, for convergence it remains to study the term
  \begin{align*}
    (\star) = \abs{(\Exp_{X | S = 1} - \hExp_{X | S = 1})\parent{\Prob(S = 1)(2\heta(X, 1) - 1) - \frac{\htheta\heta(X, 1)}{\Exp_{X | S = 1}[\heta(X, 1)]}}_+}\enspace.
  \end{align*}
  Notice that thanks to the second part of Assumption~\ref{ass:estimator_heta_is_consistent} and the fact that $\htheta \in [-2, 2]$ we have
  \begin{align*}
      \abs{\frac{\htheta\heta(X, 1)}{\Exp_{X | S = 1}[\heta(X, 1)]}} \leq \frac{2}{c_{n, N}}\enspace.
  \end{align*}
  Therefore, we can upper bound $(\star)$ as
  \begin{align*}
      (\star) \leq \sup_{t \in [-2/c_{n, N}, 2/c_{n, N}]}\abs{(\Exp_{X | S = 1} - \hExp_{X | S = 1})\parent{\Prob(S = 1)(2\heta(X, 1) - 1) + t}_+}\enspace,
  \end{align*}
  where the random quantity has been ``supped-out''.
  Introduce,
  \begin{align*}
        \data_{N_1} &= \enscond{X_i \in \data_N}{S_i = 1}\\
        \data_{N_0} &= \enscond{X_i \in \data_N}{S_i = 0}\enspace,\\
    \end{align*}
    of size $N_1$ and $N_0$ respectively, such that $N_1 + N_0 = N$.
    Clearly we have $\data_{N_s} \simiid \Prob_{X | S = s}$ for each $s \in \{0, 1\}$.
    Also recall that Remark~\ref{rm:size_of_data} implies that neither $N_0$ nor $N_1$ are equal to zero, however, both are still random.
    Besides, denote by $\data^S_N = \enscond{S_i}{(X_i, S_i) \in \data_N}$ the dataset which is obtained from $\data_N$ by removing features.
    Thus,
    \begin{align*}
        \Exp_{(\data_N)}  (\star) \leq  \Exp_{\data^S_N}\Exp_{\data_{N_1}}\sup_{t \in [-2/c_{n, N}, 2/c_{n, N}]}\abs{(\Exp_{X | S = 1} - \hExp_{X | S = 1})\parent{(2\heta(X, 1) - 1)\Prob(S = 1) + t}_+}\enspace.
    \end{align*}
    Conditionally on $\data^S_N$ we can view $N_0$ and $N_1$ as fixed strictly positive integers, moreover, conditionally on $\data_n$ the estimator $\heta$ is not random as it is built \emph{only} on $\data_n$.
    Thus, we would like to control the following process
    \begin{align*}
        \Exp_{\data_{N_1}}\sup_{t \in [-2/c_{n, N}, 2/c_{n, N}]}\abs{(\Exp_{X | S = 1} - \hExp_{X | S = 1})\parent{(2\heta(X, 1) - 1)\Prob(S = 1) + t}_+}\enspace,
    \end{align*}
    conditionally on $\data^S_N, \data_n$.
    First of all we rewrite this process as
    \begin{align*}
        \frac{1}{c_{n, N}}\Exp_{\data_{N_1}}\sup_{\abs{t} \leq 1}\abs{(\Exp_{X | S = 1} - \hExp_{X | S = 1})\parent{(2\heta(X, 1) - 1)\Prob(S = 1)c_{n, N} + 2{t}}_+}\enspace.
    \end{align*}
    Thanks to the symmetrization argument we can write
    \begin{align*}
        \Exp_{\data_{N_1}}\sup_{\abs{t} \leq 1}&\abs{(\Exp_{X | S = 1} - \hExp_{X | S = 1})\parent{(2\heta(X, 1) - 1)\Prob(S = 1)c_{n, N} + 2{t}}_+}
        \\&\leq
        2\Exp_{\data_{N_1}}\sup_{\abs{t} \leq 1}\abs{\frac{1}{N_1}\sum_{i = 1}^{N_1}\varepsilon_if_t(X_i)}\enspace,
    \end{align*}
    where $f_t(\cdot) = \parent{(2\heta(\cdot, 1) - 1)\Prob(S = 1)c_{n, N} + 2{t}}_+$.
    Notice that for each $t, t'$ we have for all $x \in \bbR^d$
    \begin{align*}
        \abs{f_t(x) - f_{t'}(x)} \leq 2 \abs{t - t'}\enspace,
    \end{align*}
    that is, the parametrization is $2$-Lipschitz.
    Therefore, standard results in empirical processes (combine~\cite[Lemma 6.2]{Wellner05} with~\cite[Theorem 3.2.]{Koltchinskii11}) tells us that there exists $C > 0$ such that
    \begin{align*}
        \Exp_{\data_{N_1}}\sup_{\abs{t} \leq 1}\abs{\frac{1}{N_1}\sum_{i = 1}^{N_1}\varepsilon_if_t(X_i)} \leq C \sqrt{\frac{1}{N_1}}\enspace.
    \end{align*}
    Thus, taking expectation \wrt $\data^s_N$ we get
    \begin{align*}
      \Exp_{(\data_N)}(\star) \leq \frac{C}{c_{n, N}} \Exp_{\data^s_N}\sqrt{\frac{1}{N_1}}\enspace,
    \end{align*}
    applying Lemma~\ref{lem:binomial_moment_inverse} we get for some $C > 0$ that depends on $\Prob(S = 1)$ that
    \begin{align*}
        \Exp_{(\data_N)}(\star) \leq \frac{C}{c_{n, N}}\sqrt{\frac{1}{N}}\enspace.
     \end{align*}
     Thanks to Assumption~\ref{ass:estimator_heta_is_consistent} we have
     \begin{align*}
        \frac{1}{c_{n, N}\sqrt{N}} = o\parent{1}\enspace,
     \end{align*}
     thus, the term $\Exp_{(\data_N)}(\star)$ converges to zero.
     Repeating the same argument for $(\triangle \triangle)$ we conclude.
\end{proof}
\section{Proofs of auxiliary results}
\label{sec:auxiliary_results_proofs}
\begin{proof}[Proof of Lemma~\ref{lem:is_g_fair}]
 We start from the level of unfairness of $g$, that is, we would like to find an upper bound on
\begin{align*}
    \abs{\Probcond{g(X, S) = 1}{S = 1, Y = 1} - \Probcond{g(X, S) = 1}{S = 0, Y = 1}}\enspace,
\end{align*}
rewriting the expression above, our goal can be written as
\begin{align*}
  \abs{\frac{\Exp_{X | S = 1}\eta(X, 1)g(X, 1)}{\Exp_{X | S = 1}\eta(X, 1)} - \frac{\Exp_{X | S = 0}\eta(X, 0)g(X, 0)}{\Exp_{X | S = 0}\eta(X, 0)}}\enspace.
\end{align*}
Now, we start working with the expression above
\begin{align*}
  & \abs{\frac{\Exp_{X | S = 1}\eta(X, 1)g(X, 1)}{\Exp_{X | S = 1}\eta(X, 1)} - \frac{\Exp_{X | S = 0}\eta(X, 0)g(X, 0)}{\Exp_{X | S = 0}\eta(X, 0)}} \\
  &\leq
  \abs{\frac{\Exp_{X | S = 1}\eta(X, 1)g(X, 1)}{\Exp_{X | S = 1}\eta(X, 1)} - \frac{\Exp_{X | S = 1}\heta(X, 1)g(X, 1)}{\Exp_{X | S = 1}\heta(X, 1)}}\\
  &\phantom{=}+
  \abs{\frac{\Exp_{X | S = 0}\heta(X, 0)g(X, 0)}{\Exp_{X | S = 0}\heta(X, 0)} - \frac{\Exp_{X | S = 0}\eta(X, 0)g(X, 0)}{\Exp_{X | S = 0}\eta(X, 0)}}\\
  &\phantom{=}+
  \abs{\frac{\Exp_{X | S = 1}\heta(X, 1)g(X, 1)}{\Exp_{X | S = 1}\heta(X, 1)} - \frac{\Exp_{X | S = 0}\heta(X, 0)g(X, 0)}{\Exp_{X | S = 0}\heta(X, 0)}}\enspace.
\end{align*}
The first two terms on the right hand side of the inequality can be upper-bounded in a similar way.
That is why we only show the bound for the first term, that is, for $S = 1$.
We have for $(*) = \abs{\frac{\Exp_{X | S = 1}\eta(X, 1)g(X, 1)}{\Exp_{X | S = 1}\eta(X, 1)} - \frac{\Exp_{X | S = 1}\heta(X, 1)g(X, 1)}{\Exp_{X | S = 1}\heta(X, 1)}}$
\begin{align*}
   (*)
    &\leq
    \frac{\Exp_{X | S = 1}\abs{\eta(X, 1) - \heta(X, 1)}}{\Probcond{Y = 1}{S = 1}}
    +
    \abs{\frac{\Exp_{X | S = 1}\heta(X, 1)g(X, 1)}{\Exp_{X | S = 1}\eta(X, 1)} - \frac{\Exp_{X | S = 1}\heta(X, 1)g(X, 1)}{\Exp_{X | S = 1}\heta(X, 1)}}\\
    &\leq
    \frac{\Exp_{X | S = 1}\abs{\eta(X, 1) - \heta(X, 1)}}{\Probcond{Y = 1}{S = 1}}\\
    &\phantom{=}{+}
    \Exp_{X | S = 1}\heta(X, 1)g(X, 1)\abs{\frac{\Exp_{X | S = 1}\heta(X, 1)}{\Exp_{X | S = 1}\eta(X, 1)\Exp_{X | S = 1}\heta(X, 1)} {-} \frac{\Exp_{X | S = 1}\eta(X, 1)}{\Exp_{X | S = 1}\heta(X, 1)\Exp_{X | S = 1}\eta(X, 1)}}\\
    &\leq
    \frac{\Exp_{X | S = 1}\abs{\eta(X, 1) - \heta(X, 1)}}{\Probcond{Y = 1}{S = 1}}
    +
    \Exp_{X | S = 1}\heta(X, 1)g(X, 1)\frac{\Exp_{X | S = 1}\abs{\heta(X, 1) - \heta(X, 1)}}{\Exp_{X | S = 1}\eta(X, 1)\Exp_{X | S = 1}\heta(X, 1)}\\
    &\leq
    2\frac{\Exp_{X | S = 1}\abs{\eta(X, 1) - \heta(X, 1)}}{\Probcond{Y = 1}{S = 1}}\enspace,
\end{align*}
thus, we have
\begin{align*}
  & \abs{\Probcond{g(X, S) = 1}{S = 1, Y = 1} - \Probcond{g(X, S) = 1}{S = 0, Y = 1}} \\
  &\leq
  2\frac{\Exp_{X | S = 1}\abs{\eta(X, 1) - \heta(X, 1)}}{\Probcond{Y = 1}{S = 1}}\\
  &\phantom{=}+
  2\frac{\Exp_{X | S = 0}\abs{\eta(X, 0) - \heta(X, 0)}}{\Probcond{Y = 1}{S = 0}}\\
  &\phantom{=}+
  \abs{\frac{\Exp_{X | S = 1}\heta(X, 1)g(X, 1)}{\Exp_{X | S = 1}\heta(X, 1)} - \frac{\Exp_{X | S = 0}\heta(X, 0)g(X, 0)}{\Exp_{X | S = 0}\heta(X, 0)}}\enspace.
\end{align*}
Finally, it remains to upper bound
\begin{align*}
    (**) = \abs{\frac{\Exp_{X | S = 1}\heta(X, 1)g(X, 1)}{\Exp_{X | S = 1}\heta(X, 1)} - \frac{\Exp_{X | S = 0}\heta(X, 0)g(X, 0)}{\Exp_{X | S = 0}\heta(X, 0)}}\enspace.
\end{align*}
Recall that $\hExp_{X | S = 1}$ and $\hExp_{X | S = 0}$ stands for the expectations taken \wrt empirical measure induced by $\data_N$, and that $\data_N$ is independent from $\data_n$.
Therefore, we can write
\begin{align*}
    (**)
    &\leq
    \abs{\frac{\Exp_{X | S = 1}\heta(X, 1)g(X, 1)}{\Exp_{X | S = 1}\heta(X, 1)} - \frac{\hExp_{X | S = 1}\heta(X, 1)g(X, 1)}{\hExp_{X | S = 1}\heta(X, 1)}}\\
    &\phantom{=}+
    \abs{\frac{\Exp_{X | S = 0}\heta(X, 0)g(X, 0)}{\Exp_{X | S = 0}\heta(X, 0)} - \frac{\hExp_{X | S = 0}\heta(X, 0)g(X, 0)}{\hExp_{X | S = 0}\heta(X, 0)}}\\
    &\phantom{=}+
    \abs{\frac{\hExp_{X | S = 1}\heta(X, 1)g(X, 1)}{\hExp_{X | S = 1}\heta(X, 1)} - \frac{\hExp_{X | S = 0}\heta(X, 0)g(X, 0)}{\hExp_{X | S = 0}\heta(X, 0)}}\enspace.
  \end{align*}
  Clearly, the last term on the right hand side of the previous inequality corresponds to our empirical criteria since everything can be easily evaluated using data.
  The first two terms on the right hand side of the inequality can be upper-bounded in a similar fashion, again, we only demonstrate the bound for $S = 1$.
  We can write
  \begin{align*}
  & \abs{\frac{\Exp_{X | S = 1}\heta(X, 1)g(X, 1)}{\Exp_{X | S = 1}\heta(X, 1)} - \frac{\hExp_{X | S = 1}\heta(X, 1)g(X, 1)}{\hExp_{X | S = 1}\heta(X, 1)}} \\
  &\leq
  \abs{\frac{\Exp_{X | S = 1}\heta(X, 1)g(X, 1)}{\Exp_{X | S = 1}\heta(X, 1)} - \frac{\hExp_{X | S = 1}\heta(X, 1)g(X, 1)}{\Exp_{X | S = 1}\heta(X, 1)}}\\
  &\phantom{=}+
  \abs{\frac{\hExp_{X | S = 1}\heta(X, 1)g(X, 1)}{\Exp_{X | S = 1}\heta(X, 1)} - \frac{\hExp_{X | S = 1}\heta(X, 1)g(X, 1)}{\hExp_{X | S = 1}\heta(X, 1)}}\enspace.
  \end{align*}
  Notice that for the first term on the right hand side of the inequality we have
  \begin{align*}
      \abs{\frac{\Exp_{X | S = 1}\heta(X, 1)g(X, 1)}{\Exp_{X | S = 1}\heta(X, 1)} - \frac{\hExp_{X | S = 1}\heta(X, 1)g(X, 1)}{\Exp_{X | S = 1}\heta(X, 1)}} \leq \frac{\abs{\Exp_{X | S = 1}\heta(X, 1)g(X, 1) - \hExp_{X | S = 1}\heta(X, 1)g(X, 1)}}{\Exp_{X | S = 1}\heta(X, 1)}\enspace,
  \end{align*}
  whereas for the second term we can write
  \begin{align*}
    \abs{\frac{\hExp_{X | S = 1}\heta(X, 1)g(X, 1)}{\Exp_{X | S = 1}\heta(X, 1)} - \frac{\hExp_{X | S = 1}\heta(X, 1)g(X, 1)}{\hExp_{X | S = 1}\heta(X, 1)}} \leq \frac{\abs{\hExp_{X | S = 1}\heta(X, 1) - \Exp_{X | S = 1}\heta(X, 1)}}{\Exp_{X | S = 1}\heta(X, 1)}\enspace.
  \end{align*}
\end{proof}

\begin{proof}[Proof of Lemma~\ref{lem:emp_proc}]
    Let us first introduce two slices of $\data_N$ as
    $$
        \data_{N_1} = \enscond{X_i \in \data_N}{S_i = 1},~
        \data_{N_0} = \enscond{X_i \in \data_N}{S_i = 0}\enspace
$$
    of size $N_1$ and $N_0$ respectively, such that $N_1 + N_0 = N$.
    Clearly we have $\data_{N_s} \simiid \Prob_{X | S = s}$ for each $s \in \{0, 1\}$.
    Besides, denote by $\data^S_N = \enscond{S_i}{(X_i, S_i) \in \data_N}$ the which is obtained from $\data_N$ by removing features.
    Recalling Remark~\ref{rm:size_of_data}, we have
    \begin{align*}
        N_1 - 2 \sim \text{Bin}(N, \Prob(S = 1)),\quad N_0 - 2 \sim \text{Bin}(N, \Prob(S = 0))\enspace.
    \end{align*}
    Clearly, since the proposed algorithm is a thresholding of $\heta$ we have
    \begin{align*}
      & \Exp_{(\data_n, \data_N)}\abs{(\Exp_{X | S = 0} - \hExp_{X | S = 0})\heta(X, 0)\hat g(X, 0)}\\
      & \leq \Exp_{(\data_n, \data_N)}\sup_{t \in [0, 1]}\abs{(\Exp_{X | S = 0}
      - \hExp_{X | S = 0})\heta(X, 0)\ind{t \leq \heta(X, 0)}}\enspace.
    \end{align*}
    Further we work conditionally on $\data_n$.
    Using the classical symmetrization technique~\cite[Theorem 2.1.]{Koltchinskii11}
    we get
    \begin{align*}
        & \Exp_{\data_N}\sup_{t \in [0, 1]}\abs{(\Exp_{X | S = 0} - \hExp_{X | S = 0})\heta(X, 0)\ind{t \leq \heta(X, 0)}} \\
        &=
        \Exp_{\data^S_N}\Exp_{\data_{N_0}}\sup_{t \in [0, 1]}\abs{(\Exp_{X | S = 0} - \hExp_{X | S = 0})\heta(X, 0)\ind{t \leq \heta(X, 0)}}\\
        &\leq
        2\Exp_{\data^S_N}\Exp_{\data_{N_0}}\Exp_\varepsilon\sup_{t \in [0, 1]}\abs{\frac{1}{N_0}\sum_{X_i \in \data_{N_0}} \varepsilon_i \heta(X_i, 0)\ind{t \leq \heta(X_i, 0)}}\enspace,
    \end{align*}
    where $\varepsilon_i \simiid \text{Rademacher variables}$.
    Note that the function class $x \mapsto \ind{t \leq \heta(x, 0)}$ has VC-dimension~\cite{Vapnik_Chervonenkis15} equal to one.
    At this moment we will work with
    \begin{align*}
        \Exp_\varepsilon\sup_{t \in [0, 1]}\abs{\frac{1}{N_0}\sum_{X_i \in \data_{N_0}} \varepsilon_i \heta(X_i, 0)\ind{t \leq \heta(X_i, 0)}}\enspace,
    \end{align*}
    conditionally on all the data.
    First of all let us introduce $\class{F} = \enscond{f}{\exists t \in [0, 1],\,f(x) = \ind{t \leq \heta(x, 0)}}$
    Thus, our process can be written as
    \begin{align*}
      \Exp_\varepsilon\sup_{f \in \class{F}}\abs{\frac{1}{N_0}\sum_{X_i \in \data_{N_0}} \varepsilon_i \varphi_i(f(X_i))}\enspace,
    \end{align*}
    where $\varphi_i(\cdot) = \eta(X_i, 0) \times \cdot$.
    Clearly, we have $\varphi_i(0) = 0$ and for every $u, v$
    \begin{align*}
        \abs{\varphi_i(u) - \varphi_i(v)} \leq \abs{u - v}\enspace.
    \end{align*}
    That is, $\varphi_i$ are contractions, and the contraction lemma~\cite[Theorem 2.2.]{Koltchinskii11} gives
    \begin{align*}
        \Exp_\varepsilon\sup_{f \in \class{F}}\abs{\frac{1}{N_0}\sum_{X_i \in \data_{N_0}} \varepsilon_i \varphi_i(f(X_i))} \leq \Exp_\varepsilon\sup_{f \in \class{F}}\abs{\frac{1}{N_0}\sum_{X_i \in \data_{N_0}} \varepsilon_i f(X_i)}\enspace.
    \end{align*}
    Recall, that the class $\class{F}$ is a VC-class with VC-dimension equal to one.
    Therefore, it is a known fact~\cite{Dvoretzky_Kiefer_Wolfowitz56,Massart90} that there exists $C > 0$ such that
    \begin{align*}
        \Exp_\varepsilon\sup_{f \in \class{F}}\abs{\frac{1}{N_0}\sum_{X_i \in \data_{N_0}} \varepsilon_i f(X_i)} \leq C\sqrt{\frac{1}{N_0}}\enspace,
    \end{align*}
    almost surely.
    The above implies that
    \begin{align*}
      \Exp_{\data_N}\sup_{t \in [0, 1]}\abs{(\Exp_{X | S = 0} - \hExp_{X | S = 0})\heta(X, 0)\ind{t \leq \heta(X, 0)}} \leq C\Exp_{\data^S_N}\sqrt{\frac{1}{N_0}}\enspace.
    \end{align*}
    It remains to provide an upper bound on $\Exp_{\data^S_N}\sqrt{\frac{1}{N_0}}$, to this end we recall that this expectation can be written as
    \begin{align*}
        \Exp\sqrt{\frac{1}{2 + Z}}\enspace,
    \end{align*}
    where $Z$ is the binomial random variable with parameters $N$ and $\Prob(S = 0)$.
    Thus, thanks to Lemma~\ref{lem:binomial_moment_inverse} there exists a constant $C > 0$ that depends on $\Prob(S = 0)$ such that
    \begin{align*}
        \Exp \sqrt{\frac{1}{2 + Z}} \leq C \sqrt{\frac{1}{N}}\enspace.
    \end{align*}
    Similarly we get the bound for the case $S = 1$.
\end{proof}

\section{Optimal classifier independent of sensitive feature}
\label{sec:optimal_classifier_independent_from_sensitive_feature}
In this section we provide guidelines to construct a \emph{plug-in} algorithm which can use the sensitive feature only at training time but cannot use it for future decision making.
It is clear that the first step would be to derive fair optimal classifier $g^*: \bbR^d \to \{0, 1\}$ which is defined as
\begin{align*}
  g^* \in \argmin\enscond{\risk(g)}{\Probcond{g(X) = 1}{S = 1, Y = 1} = \Probcond{g(X) = 1}{S = 0, Y = 1}}\enspace,
\end{align*}
with $\risk(g) \eqdef \Prob(Y \neq g(X))$.
Next result establishes this expression.
\begin{proposition}[Optimal rule]
  \label{prop:optimal_rule_indep_of_S}
  Under Assumption~\ref{ass:continuity} an optimal classifier $g^*$ can be obtained for all $x \in \bbR^d$ as
  \begin{align*}
    g^*(x) = \ind{1 \leq 2\eta(x) + \theta^*\parent{\frac{\eta(x, 0)}{\Exp_X[\eta(X, 0)]} - \frac{\eta(x, 1)}{\Exp_X[\eta(X, 1)]}}}\enspace,
  \end{align*}
  where $\theta^*$ is such that the equality
  \begin{align*}
  \frac{\Exp_{X}\left[\eta(X, 1)g^*(X)\right]}{\Exp_{X}[\eta(X, 1)]}
  =
  &\frac{\Exp_{X}\left[\eta(X, 0)g^*(X)\right]}{\Exp_{X}[\eta(X, 0)]}\enspace,
  \end{align*}
  is satisfied and $\eta(\cdot) \eqdef \Probcond{Y = 1}{X = \cdot}$.
\end{proposition}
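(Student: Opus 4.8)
The plan is to reproduce the weak duality scheme from the proof of Proposition~\ref{lem:optimal_rule}; the single new feature is that the decision rule $g$ now depends on $x$ alone and must serve both sensitive groups at once, so the Lagrangian no longer decouples into two independent per-group minimizations and must instead be reduced to one pointwise minimization against the marginal law of $X$. First I would form the Lagrangian and invoke weak duality to write
\begin{align*}
(*) &\eqdef \min_{g}\ens{\risk(g) : \Probcond{g(X)=1}{Y=1,S=1}=\Probcond{g(X)=1}{Y=1,S=0}}\\
&\geq \max_{\lambda\in\bbR}\min_{g}\ens{\risk(g)+\lambda\parent{\Probcond{g(X)=1}{Y=1,S=1}-\Probcond{g(X)=1}{Y=1,S=0}}} \defeq (**),
\end{align*}
where both minima are over all measurable $g\colon\bbR^d\to\{0,1\}$.

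The core step is to turn the inner objective of $(**)$ into a linear functional of $g$ of the form $\Exp_X[g(X)\,c_\lambda(X)]$ plus a constant. For the risk I would use the marginal analogue of Lemma~\ref{lem:risk_expression}, namely $\risk(g)=\Exp_X[\eta(X)]-\Exp_X[(2\eta(X)-1)g(X)]$, and for the two fairness probabilities the identity $\Probcond{g(X)=1}{Y=1,S=s}=\Exp_{X|S=s}[\eta(X,s)g(X)]/\Exp_{X|S=s}[\eta(X,s)]$ from Section~\ref{sec:optimal_equal_opp_classifier}. Since $g$ is shared, both group-conditional expectations then have to be pushed onto the marginal $\Prob_X$ through the Radon--Nikodym weight $\mathrm{d}\Prob_{X|S=s}/\mathrm{d}\Prob_X$. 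Collecting the coefficient of $g(x)$ and minimizing pointwise over $g(x)\in\{0,1\}$ (set $g(x)=1$ exactly when that coefficient is nonpositive) yields a threshold classifier $g^*_\lambda$ of precisely the stated shape: the overall regression $\eta$ enters through the risk term, the normalized group regressions $\eta(\cdot,s)$ through the penalty, and the leading constant $2$ is inherited from $2\eta(x)-1$. The algebraic identity that makes the risk collapse is $\eta(x)=\sum_{s}\eta(x,s)\Probcond{S=s}{X=x}$, which merges the two group risk contributions into the single factor $2\eta(x)-1$.

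Next I would substitute $g^*_\lambda$ back into $(**)$ to express the dual value as a function of $\lambda$; being a pointwise infimum over $g$ of functions affine in $\lambda$, it is concave. Under the no-atom part of Assumption~\ref{ass:continuity} the boundary set on which the inequality defining $g^*_\lambda$ holds with equality has probability zero, so this concave function is differentiable (the subgradient reduces to a gradient), exactly as in the proof of Proposition~\ref{lem:optimal_rule}. An envelope computation shows that its derivative equals the constraint gap evaluated at $g^*_\lambda$, so the first order condition at the optimal $\lambda^*$ is exactly the fairness equation in the statement; setting $\theta^*\eqdef\lambda^*$ identifies the multiplier. Existence of a root follows because the constraint gap is a continuous, monotone function of the multiplier that changes sign as $\lambda$ ranges over $\bbR$. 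Finally, feasibility of $g^*_{\lambda^*}$ together with weak duality gives the sandwich $\risk(g^*_{\lambda^*})\geq(*)\geq\risk(g^*_{\lambda^*})$, so $g^*_{\lambda^*}$ is optimal; this closing argument is identical to the one in Proposition~\ref{lem:optimal_rule}.

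I expect the main obstacle to be precisely the reduction to a single pointwise criterion. In the sensitive-feature-aware case the Lagrangian separated into two unconstrained conditional problems that could be minimized independently over $g(\cdot,0)$ and $g(\cdot,1)$; here one must carry the change of measure from $\Prob_{X|S=s}$ to $\Prob_X$ and check that the group weights reassemble into the normalized form appearing in the statement. The second delicate point, as in the original proof, is the absence of a duality gap: this rests entirely on the continuity part of Assumption~\ref{ass:continuity}, which removes the kinks in the dual objective, guarantees that the dual optimum is attained, and ensures it coincides with the primal value.
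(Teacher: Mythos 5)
Your overall scaffolding (weak duality, reduction of the Lagrangian to a linear functional of $g$, pointwise minimization, envelope/first-order condition, and the closing weak-duality sandwich) matches the paper's proof of Proposition~\ref{prop:optimal_rule_indep_of_S} step for step. The genuine gap sits exactly at the step you yourself flag as the main obstacle: the change of measure does not do what you claim. Starting from $\Probcond{g(X)=1}{Y=1,S=s}=\Exp_{X|S=s}[\eta(X,s)g(X)]/\Exp_{X|S=s}[\eta(X,s)]$ and pushing onto $\Prob_X$ via $\mathrm{d}\Prob_{X|S=s}/\mathrm{d}\Prob_X(x)=\tau_s(x)/\Prob(S=s)$, where $\tau_s(x)\eqdef\Probcond{S=s}{X=x}$, you obtain
\begin{align*}
    \Probcond{g(X)=1}{Y=1,S=s}=\frac{\Exp_X\left[g(X)\,\eta(X,s)\tau_s(X)\right]}{\Exp_X\left[\eta(X,s)\tau_s(X)\right]}\enspace,
\end{align*}
so the pointwise coefficient of $g(x)$ in the inner minimization is $2\eta(x)-1+\lambda\parent{\frac{\eta(x,0)\tau_0(x)}{\Exp_X[\eta(X,0)\tau_0(X)]}-\frac{\eta(x,1)\tau_1(x)}{\Exp_X[\eta(X,1)\tau_1(X)]}}$. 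The posterior weights $\tau_s(x)$ do not cancel and do not ``reassemble'' into the stated normalized form $\eta(x,s)/\Exp_X[\eta(X,s)]$ unless $\tau_s$ is constant in $x$, that is, unless $S$ is independent of $X$. Carried out honestly, your route therefore produces a thresholding rule different from the one in the statement (the general form would involve $\eta(x,s)\tau_s(x)=\Probcond{Y=1,S=s}{X=x}$ with normalizers $\Prob(Y=1,S=s)$), and no algebra closes this without an additional identification.

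The paper's proof avoids the change of measure entirely: it evaluates the fairness functionals from joint expectations, $\Probcond{g(X)=1}{Y=1,S=1}=\Exp[g(X)YS]/\Exp[YS]$ (and the analogue with $1-S$ for $s=0$), and reduces $\Exp[g(X)YS]$ to $\Prob(S=1)\Exp_X[g(X)\eta(X,1)]$ through the interchange $\Exp[Y\mid S]=\Exp_X[\Exp[Y\mid X,S]]$; that interchange is precisely what substitutes the marginal $\Exp_X$ for $\Exp_{X|S=s}$ and makes the weights disappear, and your Radon--Nikodym bookkeeping, being the fully general computation, shows that this is exactly the point where the marginal-expectation form is bought. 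To prove the proposition as stated you should adopt the paper's direct computation of $\Exp[g(X)YS]/\Exp[YS]$ rather than the change-of-measure route. The remainder of your plan is sound and coincides with the paper's argument: concavity of the dual as a pointwise infimum of affine functions, reduction of the subgradient to a gradient via the no-atom part of Assumption~\ref{ass:continuity}, the first-order condition yielding the fairness equation, and the sandwich $\risk(g^*_{\lambda^*})\geq(*)\geq\risk(g^*_{\lambda^*})$; note only that existence of a root of the constraint gap is asserted rather than proved both in your sketch and in the paper.
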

Observe that to efficiently compute the optimal classifier in this case we need to have access to $\eta(x), \eta(x, s)$ and marginal distribution $\Prob_X$.

This observation motivates us to propose a plug-in algorithm based on two datasets $\data_n = \ens{(X_i, S_i, Y_i)}_{i = 1}^n$ and $\data_N = \ens{X_i}_{i = 1}^N$.
The labeled data $\data_n$ allow to estimate $\eta(x), \eta(x, s)$ and the unlabeled data $\data_N$ allow to estimate the marginal distribution $\Prob_X$.
Interestingly, we do not need to observe sensitive features in the unlabeled dataset $\data_N$.

Formally, our procedure $\hat g$ in this case can be defined for all $x \in \bbR^d$ as
\begin{align*}
    \hat g(x) = \ind{1 \leq 2\heta(x) + \htheta\parent{\frac{\heta(x, 0)}{\hExp_X[\heta(X, 0)]} - \frac{\heta(x, 1)}{\hExp_X[\eta(X, 1)]}}}\enspace,
\end{align*}
where $\heta(x), \heta(x, s)$ for all $s \in \{0, 1\}$ are the estimates of regression functions constructed on $\data_n$, and $\hExp_X$ is the empirical expectation based on $\data_N$.

Finally, similarly to the previous case the threshold $\htheta$ is defined as
 \begin{align*}
  \htheta \in \argmin_{\theta}\abs{\frac{\hExp_{X}\left[\heta(X, 1)\hat g_{\theta}(X)\right]}{\hExp_{X}[\heta(X, 1)]}
    -
    \frac{\hExp_{X}\left[\heta(X, 0)\hat g_{\theta}(X)\right]}{\hExp_{X}[\heta(X, 0)]}}\enspace,
\end{align*}
with $\hat g_{\theta}$ defined for all $x \in \bbR^d$ as
\begin{align*}
    \hat g_{\theta}(x) = \ind{1 \leq 2\heta(x) + \theta\parent{\frac{\heta(x, 0)}{\hExp_X[\heta(X, 0)]} - \frac{\heta(x, 1)}{\hExp_X[\eta(X, 1)]}}}\enspace.
\end{align*}
\subsection{Proofs}
\label{subsec:proofs}

\begin{proof}[Proof of Proposition~\ref{prop:optimal_rule_indep_of_S}]
    Let us study the following minimization problem
\begin{align*}
    (*) \eqdef \min_{g \in \class{G}}\enscond{\risk(g)}{\Probcond{g(X) = 1}{Y = 1, S = 1} = \Probcond{g(X) = 1}{Y = 1, S = 0}}\enspace.
\end{align*}
Using the weak duality we can write
\begin{align*}
    (*)
    &=
    \min_{g \in \class{G}}\max_{\lambda \in \bbR}\ens{\risk(g) + \lambda\parent{\Probcond{g(X) = 1}{Y = 1, S = 1} - \Probcond{g(X) = 1}{Y = 1, S = 0}}}\\
    &\geq
    \max_{\lambda \in \bbR}\min_{g \in \class{G}}\ens{\risk(g) + \lambda\parent{\Probcond{g(X) = 1}{Y = 1, S = 1} - \Probcond{g(X) = 1}{Y = 1, S = 0}}} \\& \defeq (**)\enspace.
\end{align*}
We first study the objective function of the max min problem $(**)$, which is equal to
\begin{align*}
    \Prob(g(X) \neq Y) + \lambda\parent{\Probcond{g(X) = 1}{Y = 1, S = 1} - \Probcond{g(X) = 1}{Y = 1, S = 0}}\enspace.
\end{align*}
Using arguments of Lemma~\ref{lem:risk_expression} we can write
\begin{align*}
    \Prob(g(X) \neq Y) = \Prob(Y = 1) - \Exp_X[(2\eta(X) - 1)g(X)]\enspace,
\end{align*}
where $\eta(\cdot) \eqdef \Prob(Y = 1 | X = \cdot)$.
Moreover, since
\begin{align*}
  \Exp[YS] = \Exp_S[S\Exp[Y | S]] = \Exp_S[S\Exp_X[\Exp[Y | X, S]]] = \Exp_S[S\Exp_X[\eta(X, S)]] = \Prob(S = 1)\Exp_X[\eta(X, 1)]\enspace,
\end{align*}

we can write for the rest
\begin{align*}
    \Probcond{g(X) = 1}{Y = 1, S = 1}
    &=
    \frac{\Prob\parent{g(X) = 1, Y = 1, S = 1}}{\Prob\parent{Y = 1, S = 1}}
    =
    \frac{\Exp[g(X)YS]}{\Exp[YS]}\\
    &=
    \frac{\Prob(S = 1)\Exp_X[g(X)\eta(X, 1)]}{\Prob(S = 1)\Exp_{X}[\eta(X, 1)]}
    =
    \frac{\Exp_X[g(X)\eta(X, 1)]}{\Exp_{X}[\eta(X, 1)]}
    \\
    \Probcond{g(X) = 1}{Y = 1, S = 0}
    &=
    \frac{\Prob\parent{g(X) = 1, Y = 1, S = 0}}{\Prob\parent{Y = 1, S = 0}}
    =
    \frac{\Exp[g(X)Y(1 - S)]}{\Exp[Y(1 - S)]}\\
    &=
    \frac{\Exp_X[g(X)\eta(X, 0)]}{\Exp_X[\eta(X, 0)]}\enspace.
\end{align*}
Using these, the objective of $(**)$ can be simplified as
\begin{align*}
  \Prob(Y = 1)
  &-
  \Exp_{X}\left[g(X)\parent{2\eta(X) - 1 + \lambda\parent{\frac{\eta(X, 0)}{\Exp_X[\eta(X, 0)]} - \frac{\eta(X, 1)}{\Exp_X[\eta(X, 1)]}}}\right]\enspace.
\end{align*}
Clearly, for every $\lambda \in \bbR$ a minimizer $g^*_\lambda$ of the problem $(**)$ can be written for all $x \in \bbR^d$ as
\begin{align*}
    g^*_{\lambda}(x)
    &=
    \ind{2\eta(x) - 1 + \lambda\parent{\frac{\eta(x, 0)}{\Exp_X[\eta(X, 0)]} - \frac{\eta(x, 1)}{\Exp_X[\eta(X, 1)]}} \geq 0}\enspace.
\end{align*}
Similarly to Proposition~\ref{lem:optimal_rule}, for $\lambda = 0$ we recover the classical optimal predictor in the context of binary classification.
Substituting this classifier into the objective of $(**)$ we arrive at
\begin{align*}
  (**) = \Prob(Y = 1) -
  &\min_{\lambda \in \bbR}
  \Biggl\{
  \Exp_X\parent{2\eta(X) - 1 + \lambda\parent{\frac{\eta(X, 0)}{\Exp_X[\eta(X, 0)]} - \frac{\eta(X, 1)}{\Exp_X[\eta(X, 1)]}}}_+
  \Biggr\}\enspace.
\end{align*}
The mapping
\begin{align*}
    \lambda
    &\mapsto
    \Exp_X\parent{2\eta(X) - 1 + \lambda\parent{\frac{\eta(X, 0)}{\Exp_X[\eta(X, 0)]} - \frac{\eta(X, 1)}{\Exp_X[\eta(X, 1)]}}}_+\enspace,
\end{align*}
is convex, therefore we can write the first order optimality conditions as
\begin{align*}
     0 \in &\partial_{\lambda}\Exp_X\parent{2\eta(X) - 1 + \lambda\parent{\frac{\eta(X, 0)}{\Exp_X[\eta(X, 0)]} - \frac{\eta(X, 1)}{\Exp_X[\eta(X, 1)]}}}_+\enspace.
\end{align*}
Clearly, under continuity assumption this subgradient is reduced to the gradient almost surely, thus we have the following condition on the optimal value of $\lambda^*$
\begin{align*}
  \frac{\Exp_{X}\left[\eta(X, 1)g_{\lambda^*}^*(X)\right]}{\Exp_{X}[\eta(X, 1)]}
  =
  &\frac{\Exp_{X}\left[\eta(X, 0)g_{\lambda^*}^*(X)\right]}{\Exp_{X}[\eta(X, 0)]}\enspace,
\end{align*}
and the pair $(\lambda^*, g^*_{\lambda^*})$ is a solution of the dual problem $(**)$.
Notice that the previous condition can be written as
\begin{align*}
  \Probcond{g_{\lambda^*}^*(X) = 1}{Y = 1, S = 1} = \Probcond{g_{\lambda^*}^*(X) = 1}{Y = 1, S = 0}\enspace.
\end{align*}
This implies that the classifier $g_{\lambda^*}^*$ is fair.
Finally, it remains to show that $g_{\lambda^*}^*$ is actually an optimal classifier, indeed, since $g_{\lambda^*}^*$ is fair we can write on the one hand
\begin{align*}
    \risk(g_{\lambda^*}^*) {\geq} \min_{g \in \class{G}}\enscond{\risk(g)}{\Probcond{g(X) = 1}{Y = 1, S = 1} {=} \Probcond{g(X) = 1}{Y = 1, S = 0}} {=} (*) .
\end{align*}
On the other hand the pair $(\lambda^*, g_{\lambda^*}^*)$ is a solution of the dual problem $(**)$, thus we have
\begin{align*}
    (*)
    \geq &
    \risk(g_{\lambda^*}^*)
    +
    \lambda^*\parent{\Probcond{g_{\lambda^*}^*(X) = 1}{Y = 1, S = 1} - \Probcond{g_{\lambda^*}^*(X) = 1}{Y = 1, S = 0}} \\
    & =
     \risk(g_{\lambda^*}^*)\enspace.
\end{align*}
It implies that the classifier $g_{\lambda^*}^*$ is optimal, hence $g^* \equiv g^*_{\lambda^*}$.
\end{proof}

\subsection{Experiments without the sensitive feature}

In this section we report the equivalent results to those in Table~\ref{tab:results} and Figure~\ref{fig:tableexplanation} into Table~\ref{tab:results2} and Figure~\ref{fig:tableexplanation2} when the sensitive feature is not in the functional form of the model.
Note that the method of Hardt~\cite{hardt2016equality} is not able to deal with this setting then there are no results for this case.

From Table~\ref{tab:results2} and Figure~\ref{fig:tableexplanation2} we can observe analogous results to those in Section \ref{sec:experiments}. Nevertheless, note that, without the sensitive feature in the functional form of the models, the results are generally less accurate and more fair w.r.t. to the case that the sensitive feature in the functional form of the models. This results is similar to the one reported in~\cite{Donini_Oneto_Ben-David_Taylor_Pontil18}.

\color{blue}

\begin{table}
\scriptsize
\centering
\setlength{\tabcolsep}{0.03cm}
\begin{tabular}{|l|c|c|c|c|c|c|c|c|c|c|}
\hline
\hline
& \multicolumn{2}{c|}{Arrhythmia}
& \multicolumn{2}{c|}{COMPAS}
& \multicolumn{2}{c|}{Adult}
& \multicolumn{2}{c|}{German}
& \multicolumn{2}{c|}{Drug} \\
Method
& ACC & DEO
& ACC & DEO
& ACC & DEO
& ACC & DEO
& ACC & DEO \\
\hline
\hline
Lin.SVM
& $0.71 {\pm} 0.05$	& $0.10 {\pm} 0.03$
& $0.72 {\pm} 0.01$	& $0.12 {\pm} 0.02$
& $0.78$            & $0.09$
& $0.69 {\pm} 0.04$ & $0.11 {\pm} 0.10$
& $0.79 {\pm} 0.02$ & $0.25 {\pm} 0.04$	\\
Lin.LR
& $0.71 {\pm} 0.04$	& $0.11 {\pm} 0.04$
& $0.73 {\pm} 0.02$	& $0.10 {\pm} 0.03$
& $0.80$            & $0.08$
& $0.68 {\pm} 0.05$ & $0.12 {\pm} 0.09$
& $0.80 {\pm} 0.03$ & $0.23 {\pm} 0.03$	\\
Lin.SVM+Hardt
& - & - & - & - & - & - & - & - & -	& - \\
Lin.LR+Hardt
& - & - & - & - & - & - & - & - & -	& - \\
Zafar
& $0.67 {\pm} 0.03$ & $0.05 {\pm} 0.02$
& $0.69 {\pm} 0.01$ & $0.10 {\pm} 0.08$
& $0.76$            & $0.05$
& $0.62 {\pm} 0.09$ & $0.13 {\pm} 0.10$
& $0.66 {\pm} 0.03$ & $0.06 {\pm} 0.06$ \\
Lin.Donini
& $0.75 {\pm} 0.05$ & $0.05 {\pm} 0.02$
& $0.73 {\pm} 0.01$	& $0.07 {\pm} 0.02$
& $0.75$            & $0.01$
& $0.69 {\pm} 0.04$ & $0.06 {\pm} 0.03$
& $0.79 {\pm} 0.02$ & $0.10 {\pm} 0.06$ \\
Lin.SVM+Ours
& $0.72 {\pm} 0.05$	& $0.03 {\pm} 0.01$
& $0.72 {\pm} 0.01$	& $0.06 {\pm} 0.02$
& $0.74$            & $0.02$
& $0.68 {\pm} 0.04$ & $0.06 {\pm} 0.04$
& $0.78 {\pm} 0.02$ & $0.12 {\pm} 0.02$	\\
Lin.LR+Ours
& $0.71 {\pm} 0.04$	& $0.04 {\pm} 0.02$
& $0.71 {\pm} 0.02$	& $0.06 {\pm} 0.02$
& $0.76$            & $0.02$
& $0.67 {\pm} 0.05$ & $0.05 {\pm} 0.03$
& $0.79 {\pm} 0.03$ & $0.10 {\pm} 0.01$	\\
\hline
\hline
SVM
& $0.71 {\pm} 0.05$	& $0.10 {\pm} 0.03$
& $0.73 {\pm} 0.01$	& $0.11 {\pm} 0.02$
& $0.79$            & $0.08$
& $0.74 {\pm} 0.03$ & $0.10 {\pm} 0.06$
& $0.81 {\pm} 0.02$ & $0.22 {\pm} 0.03$ \\
LR
& $0.70 {\pm} 0.06$	& $0.10 {\pm} 0.03$
& $0.74 {\pm} 0.01$	& $0.10 {\pm} 0.02$
& $0.78$            & $0.10$
& $0.75 {\pm} 0.03$ & $0.09 {\pm} 0.05$
& $0.81 {\pm} 0.03$ & $0.21 {\pm} 0.02$ \\
RF
& $0.81 {\pm} 0.02$ & $0.08 {\pm} 0.02$
& $0.76 {\pm} 0.03$ & $0.10 {\pm} 0.02$
& $0.84$            & $0.11$
& $0.77 {\pm} 0.03$ & $0.07 {\pm} 0.04$
& $0.85 {\pm} 0.02$ & $0.19 {\pm} 0.02$ \\
SVM+Hardt
& - & - & - & - & - & - & - & - & -	& - \\
LR+Hardt
& - & - & - & - & - & - & - & - & -	& - \\
RF+Hardt
& - & - & - & - & - & - & - & - & -	& - \\
Donini
& $0.75 {\pm} 0.05$ & $0.05 {\pm} 0.02$
& $0.72 {\pm} 0.01$	& $0.08 {\pm} 0.02$
& $0.77$            & $0.01$
& $0.73 {\pm} 0.04$ & $0.05 {\pm} 0.03$
& $0.79 {\pm} 0.03$ & $0.10 {\pm} 0.05$ \\
SVM+Ours
& $0.71 {\pm} 0.02$	& $0.06 {\pm} 0.02$
& $0.72 {\pm} 0.01$	& $0.05 {\pm} 0.02$
& $0.78$            & $0.02$
& $0.73 {\pm} 0.01$ & $0.06 {\pm} 0.03$
& $0.78 {\pm} 0.02$ & $0.11 {\pm} 0.02$ \\
LR+Ours
& $0.70 {\pm} 0.04$	& $0.06 {\pm} 0.03$
& $0.72 {\pm} 0.01$	& $0.06 {\pm} 0.02$
& $0.77$            & $0.02$
& $0.73 {\pm} 0.02$ & $0.06 {\pm} 0.02$
& $0.77 {\pm} 0.02$ & $0.11 {\pm} 0.02$ \\
RF+Ours
& $0.80 {\pm} 0.03$ & $0.02 {\pm} 0.01$
& $0.76 {\pm} 0.02$ & $0.04 {\pm} 0.02$
& $0.84$            & $0.02$
& $0.76 {\pm} 0.03$ & $0.04 {\pm} 0.02$
& $0.83 {\pm} 0.01$ & $0.06 {\pm} 0.02$ \\
\hline
\hline
\end{tabular}
\caption{Results (average $\pm$ standard deviation, when a fixed test set is not provided) for all the datasets, concerning ACC and DEO.
In this case the sensitive feature the sensitive feature is not in the functional form of the model.}
\label{tab:results2}
\end{table}

\begin{figure}
\centering
\includegraphics[width=0.5\columnwidth]{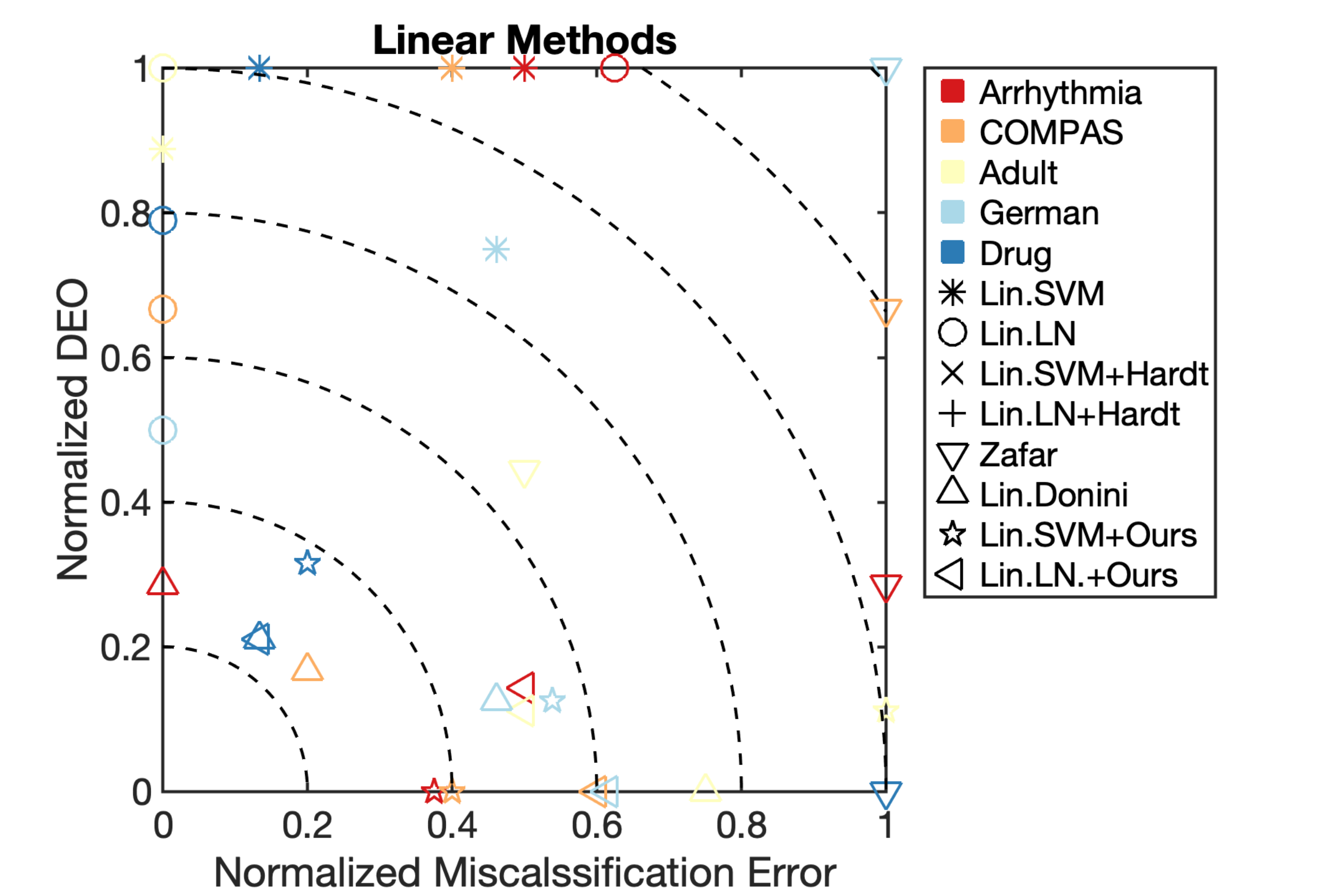} \!\!\!\!\!\!\!\!\!\!\!\!\!
\includegraphics[width=0.5\columnwidth]{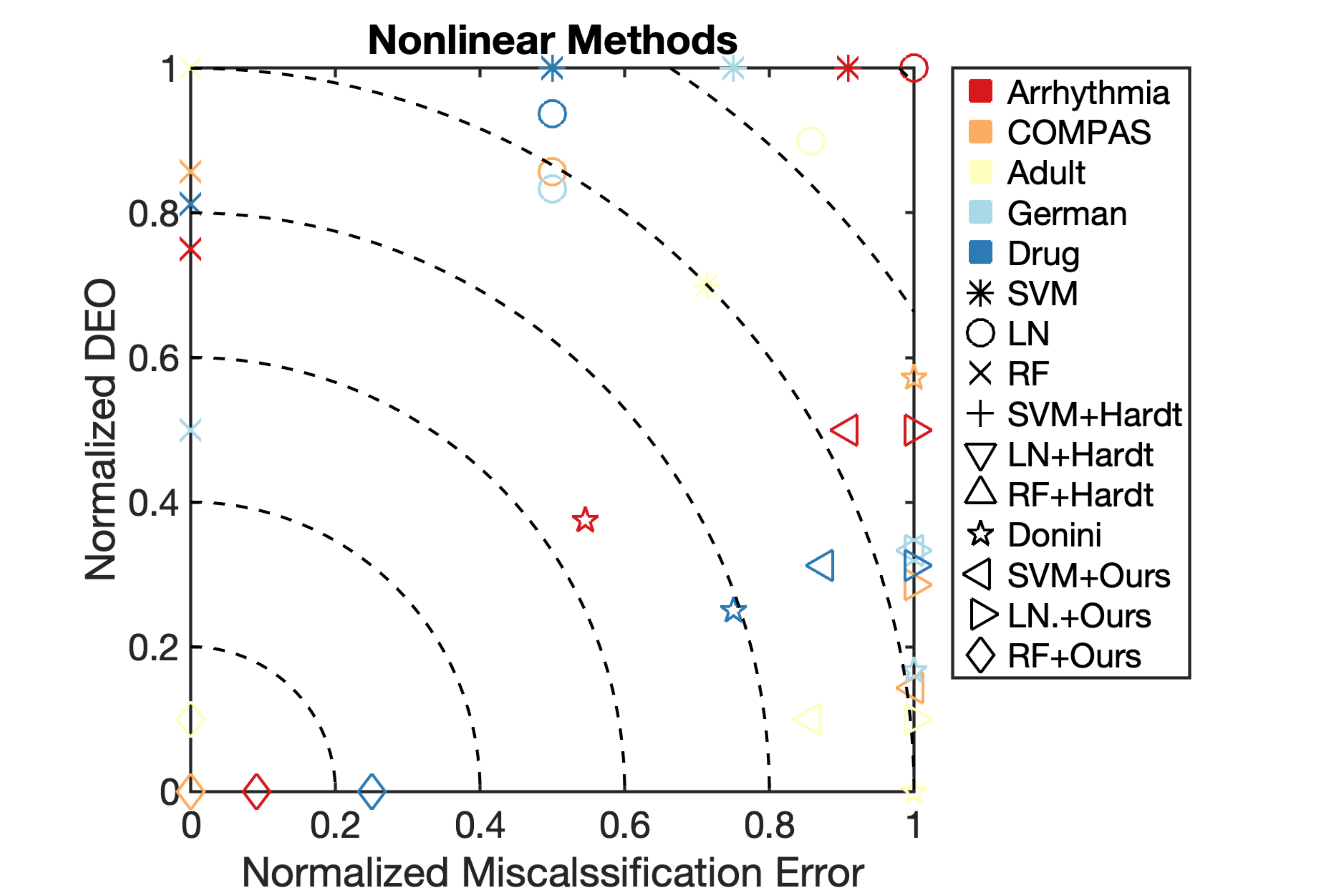}
\caption{Results of Table~\ref{tab:results2} of linear (left) and nonlinear (right) methods when the error and the DEO are normalized in $[0,1]$ column-wise.
Different colors and symbols refer to different datasets and method respectively.
The closer a point is to the origin, the better the result is.
In this case the sensitive feature the sensitive feature is not in the functional form of the model.}
\label{fig:tableexplanation2}
\end{figure}
\color{black}

\end{document}